\theoremstyle{definition}\newtheorem{Def}{Definition}[section]
\theoremstyle{plain}\newtheorem{Thm}{Theorem}[section]
\theoremstyle{plain}\newtheorem{Prop}{Proposition}[section]
\theoremstyle{plain}\newtheorem{Lem}{Lemma}[section]
\theoremstyle{remark}\newtheorem{Rem}{Remark}[section]
\newenvironment{Prob}[1]{

\noindent{\bf \underline{Problem #1.}}\hspace{2mm}}{

}
\title[Navier-Stokes equations under frictional boundary conditions]{On a strong solution of the non-stationary Navier-Stokes equations under slip or leak boundary conditions of friction type}
\author{Takahito Kashiwabara}
\address{Graduate School of Mathematical Sciences, The university of Tokyo, 3-8-1 Komaba, Meguro, Tokyo 153-8914, Japan}
\email{tkashiwa@ms.u-tokyo.ac.jp}
\keywords{Navier-Stokes equations, Frictional boundary conditions, Variational inequality, Strong solution, Galerkin's method}
\subjclass[2010]{35Q30, 35K86}
\begin{document}
\begin{abstract}
	Strong solutions of the non-stationary Navier-Stokes equations under non-linearized slip or leak boundary conditions are investigated.
	We show that the problems are formulated by a variational inequality of parabolic type, to which uniqueness is established.
	Using Galerkin's method and deriving a priori estimates, we prove global and local existence for 2D and 3D slip problems respectively.
	For leak problems, under no-leak assumption at $t=0$ we prove local existence in 2D and 3D cases.
	Compatibility conditions for initial states play a significant role in the estimates.
\end{abstract}
\maketitle
\section{Introduction} \label{Sec a}
Let $\Omega$ be a bounded smooth domain in $\mathbb R^d\, (d=2,3)$, and fix $T>0$.
We suppose that the boundary $\Gamma = \partial\Omega$ consists of two nonempty open subsets, that is, $\Gamma = \overline\Gamma_0 \cup \overline\Gamma_1,\, \overline\Gamma_0 \cap \overline\Gamma_1 = \emptyset$.
We are concerned with the non-stationary incompressible Navier-Stokes equations in $\Omega$:
\begin{numcases}{}
	u' + (u\cdot\nabla)u -\nu\Delta u + \nabla p = f & in\quad $\Omega\times(0,T)$, \label{a.3} \\
	\mathrm{div}\, u = 0 & in\quad $\Omega\times(0,T)$,
\end{numcases}
with the initial condition
\begin{equation}
	u = u_0 \qquad \text{in}\quad \Omega\times\{0\}.
\end{equation}
Here, $\nu$, $u$, $p$, and $f$ denote a viscosity constant, velocity field, pressure, and external force respectively; $u'$ means the time derivative $\frac{\partial u}{\partial t}$.

As for the boundary condition, we impose the adhesive b.c.~on $\Gamma_0$:
\begin{equation}
	u=0 \quad\text{on}\quad \Gamma_0. \label{a.5}
\end{equation}
On the other hand, we consider one of the following nonlinear b.c.~on $\Gamma_1$:
\begin{equation}
	u_n = 0, \qquad |\sigma_\tau|\le g, \qquad \sigma_\tau\cdot u_\tau + g|u_\tau| = 0, \quad\text{on}\quad \Gamma_1, \label{a.1}
\end{equation}
which is called the \emph{slip boundary condition of friction type} (SBCF), and
\begin{equation}
	u_\tau = 0, \qquad |\sigma_n|\le g, \qquad \sigma_n u_n + g|u_n| = 0, \quad\text{on}\quad \Gamma_1, \label{a.2}
\end{equation}
which is called the \emph{leak boundary condition of friction type} (LBCF).
Here, $n$ is an outer unit normal vector defined on $\Gamma$, and we write $u_n:=u\cdot n$ and $u_\tau:=u-u_nn$.
The stress tensor $\mathbb T = (T_{ij})_{i,j=1,. . . ,d}$ is given by $T_{ij} = -p\delta_{ij} + \nu ( \frac{\partial u_i}{\partial x_j} + \frac{\partial u_j}{\partial x_i} )$, $\delta_{ij}$ being Kronecker delta.
We define the stress vector $\sigma = \sigma(u,p)$ as $\sigma = \mathbb Tn$, and write $\sigma_n:=\sigma\cdot n$ and $\sigma_\tau:=\sigma-\sigma_nn$.
One can easily see that $\sigma_n=\sigma_n(u,p)$ may depend on $p$, whereas $\sigma_\tau=\sigma_\tau(u)$ does not.

The function $g$, given on $\Gamma_1$ and assumed to be strictly positive, is called a \emph{modulus of friction}.
Its physical meaning is the threshold of the tangential (resp.~normal) stress.
In fact, if $|\sigma_\tau|<g$ (resp.~$|\sigma_n|<g$) then (\ref{a.1}) (resp.~(\ref{a.2})) implies $u_\tau=0$ (resp.~$u_n=0$), namely, no slip (resp.~leak) occurs;
otherwise non-trivial slip (resp.~leak) can take place.
We notice that if we make $g=0$ formally, (\ref{a.1}) and (\ref{a.2}) reduce to the usual slip and leak b.c.~respectively.
In summary, SBCF and LBCF are non-linearized slip and leak b.c.~obtained from introduction of some friction law on the stress.

It should be also noted that the second and third conditions of (\ref{a.1}) (resp.~(\ref{a.2})) are equivalently rewritten, with the notation of subdifferential, as
\begin{equation*}
	\sigma_\tau \in -g\partial |u_\tau| \qquad (\text{resp.}\quad \sigma_n \in -g\partial |u_n|).
\end{equation*}
Though we will not pursue this matter further, one can refer to \cite{Che03, Kol00} for the Navier-Stokes equations with general subdifferential b.c.
See also \cite{Con06}, which considers the motion of a Bingham fluid under b.c.~with nonlocal friction against slip.

SBCF and LBCF are first introduced in \cite{F94,FKS95} for the stationary Stokes and Navier-Stokes equations, where existence and uniqueness of weak solutions are established.
Generalized SBCF is considered in \cite{R05,RT07}.
The $H^2$-$H^1$ regularity for the Stokes equations is proved in \cite{S04}.
In terms of numerical analysis, \cite{AGS10, Kas11_slip, Kas11_leak, LiAn11, LiLi08, LiLi10_1, LiLi10_2} deal with finite element methods for SBCF or LBCF.
Applications of SBCF and LBCF to realistic problems, together with numerical simulations, are found in \cite{KFS98, SK04}.

For non-stationary cases, \cite{Fuj01, Fuj02} study the time-dependent Stokes equations without external forces under SBCF and LBCF, using a nonlinear semigroup theory.
The solvability of nonlinear problems are discussed in \cite{LiAn11_2} for SBCF, and in \cite{ALL09} for a variant of LBCF.
They use the Stokes operator associated with the linear slip or leak b.c., and do not take into account a compatibility condition at $t=0$.

The purpose of this paper is to prove existence and uniqueness of a strong solution for (\ref{a.3})--(\ref{a.5}) with (\ref{a.1}) or (\ref{a.2}).
We employ the class of solutions of Ladyzhenskaya type (see \cite{Lad69}), searching $(u,p)$ such that
\begin{equation*}
\begin{cases}
	u\in L^\infty(0, T; H^1(\Omega)^d),\quad u' \in L^\infty(0, T; L^2(\Omega)^d) \cap L^2(0, T; H^1(\Omega)^d), \\
	p\in L^\infty(0, T; L^2(\Omega)).
\end{cases}
\end{equation*}

There are several reasons we focus on this strong solution.
First, from a viewpoint of numerical analysis, we would like to construct solutions in a class where uniqueness and regularity are assured also for 3D case.
Second, we desire an $L^\infty$-estimate with respect to time for $p$, which may not be obtained for weak solutions of Leray-Hopf type (cf.~\cite[Proposition III.1.1]{Tem77}).
Third, in LBCF, it is not straightforward to deduce a weak solution because of (\ref{a.4}) below.
Similar difficulty already comes up in the linear leak b.c.~(see \cite{Mar07})

The rest of this paper is organized as follows. 
Basic symbols, notation, and function spaces are given in Section 2.

In Section 3, we investigate the problem with SBCF.
The weak formulation is given by a variational inequality, to which we prove uniqueness of solutions.
To show existence, we consider a regularized problem, approximate it by Galerkin's method, and derive a priori estimates which allow us to pass on the limit to deduce the desired strong solution.
Using the compatibility condition that $u_0$ must satisfy SBCF, we can adapt $u_0$ to the regularized problem, which makes an essential point in the estimate.

Section 4 is devoted to a study of the problem with LBCF. There are two major differences from SBCF.
First, as was pointed out in the stationary case \cite[Remark 3.2]{F94}, we cannot obtain the uniqueness of an additive constant for $p$ if no leak occurs, namely, $u_n = 0$ on $\Gamma_1$.
Second, under LBCF, the quantity
\begin{equation}
	\int_\Omega \Big\{ (u\cdot\nabla) v\cdot v \Big\} dx = \frac12 \int_\Gamma u_n|v|^2\, ds \qquad (\text{if}\quad \mathrm{div}\,u = 0) \label{a.4}
\end{equation}
need not vanish because $u_n$ can be non-zero.
This fact affects our a priori estimates badly, and we can extract a solution only when the initial leak $\|u_{0n}\|_{L^2(\Gamma_1)}$ is small enough.
Incidentally, if we use the so-called Bernoulli pressure $p + \frac12 |u|^2$ instead of standard $p$, the mathematical difficulty arising from (\ref{a.4}) are resolved;
nevertheless the leak b.c.~involving the Bernoulli pressure is known to cause an unphysical effect in numerical simulations (see \cite[p.338]{HRT96}).
Thereby we employ the usual formulation.

Finally, in Section 5 we conclude this paper with some remarks on higher regularity.

\section{Preliminaries}
Throughout the present paper, the domain $\Omega$ is supposed to be as smooth as required.
For the precise regularity of $\Omega$ which is sufficient to deduce our main theorems, see Remarks \ref{Rem c.10} and \ref{Rem d.10}.
We shall denote by $C$ various generic positive constants depending only on $\Omega$, unless otherwise stated.
When we need to specify dependence on a particular parameter, we write as $C = C(f, g, u_0)$, and so on.

We use the Lebesgue space $L^p(\Omega)\, (1\le p\le \infty)$, and the Sobolev space $H^r(\Omega) = \{ \phi \!\in\! L^2(\Omega) \,|\, \|\phi\|_{H^r(\Omega)}^2 \!=\! \sum_{|\alpha|\le r} \|\partial^\alpha \phi\|_{L^2(\Omega)}^2 \!<\! \infty\}$ for a nonnegative integer $r$, where $H^0(\Omega)$ means $L^2(\Omega)$.
$H^s(\Omega)$ is also defined for a non-integer $s>0$ (e.g.~\cite[Definition 1.2]{GiRa86}).
We put $L^2_0(\Omega) = \{ q\in L^2(\Omega) \,\big|\, \int_\Omega q\, dx = 0 \}$.
For spaces of vector-valued functions, we write $L^p(\Omega)^d$, and so on.

The Lebesgue and Sobolev spaces on the boundary $\Gamma$, $\Gamma_0$, or $\Gamma_1$, are also used.
$H^0(\Gamma_1)$ means $L^2(\Gamma_1)$, and we put $L^2_0(\Gamma_1) = \{ \eta \in L^2(\Gamma_1) \,\big|\, \int_{\Gamma_1} \eta\, ds = 0 \}$, where $ds$ denotes the surface measure.
For a positive function $g$ on $\Gamma_1$, the weighted Lebesgue spaces $L^1_g(\Gamma_1)$ and $L^\infty_{1/g}(\Gamma_1)$ are defined by the norms
\begin{equation*}
	\|\eta\|_{L^1_g(\Gamma_1)} = \int_{\Gamma_1}g|\eta|\,ds \qquad\text{and}\qquad \|\eta\|_{L^\infty_{1/g}(\Gamma_1)} = \mathrm{ess.}\sup_{\Gamma_1} \frac{|\eta|}g,
\end{equation*}
respectively. The dual space of $L^1_g(\Gamma_1)$ is $L^\infty_{1/g}(\Gamma_1)$ (see \cite[Lemma 2.1]{F94}).

The usual trace operator $\phi\mapsto\phi|_\Gamma$ is defined from $H^1(\Omega)$ onto $H^{1/2}(\Gamma)$.
The restrictions $\phi|_{\Gamma_0}$, $\phi|_{\Gamma_1}$ of $\phi|_\Gamma$, are also considered, and
we simply write $\phi$ to indicate them when there is no fear of confusion.
In particular, $\eta_n$ and $\eta_\tau$ means $(\eta\cdot n)|_{\Gamma}$ and $(\eta - (\eta\cdot n)n)|_{\Gamma}$ respectively, for $\eta\in H^{1/2}(\Gamma)^d$.
Note that $\|\eta_n\|_{H^{1/2}(\Gamma)} \le C\|\eta\|_{H^{1/2}(\Gamma)^d}$ and $\|\eta_\tau\|_{H^{1/2}(\Gamma)^d} \le C\|\eta\|_{H^{1/2}(\Gamma)^d}$ because $n$ is smooth on $\Gamma$.

The inner product of $L^2(\Omega)^d$ is simplified as $(\cdot, \cdot)$, while other inner products and norms are written with clear subscripts, e.g., $(\cdot, \cdot)_{L^2(\Gamma_1)}$ or $\|\cdot\|_{H^1(\Omega)^d}$.
For a Banach space $X$, we denote its dual space by $X'$ and the dual product between $X'$ and $X$ by $\left< \cdot, \cdot \right>_X$.
Moreover, we employ the standard notation of Bochner spaces such as $L^2(0,T; X)$, $H^1(0,T; X)$.

For function spaces corresponding to a velocity and pressure, we introduce closed subspaces of $H^1(\Omega)^d$ or $L^2(\Omega)$ as follows:
\begin{align*}
V &= \{ v\in H^1(\Omega)^d \,\big|\, v = 0 \text{ on } \Gamma_0 \},&\mathring V &= \{ v\in H^1(\Omega)^d \,\big|\, v = 0 \text{ on } \Gamma \}, \\
	V_n &= \{ v\in V \,\big|\, v_n = 0 \text{ on } \Gamma_1 \},& V_\tau &= \{ v\in V \,\big|\, v_\tau = 0 \text{ on } \Gamma_1 \}, \\
	Q &= L^2(\Omega),&\mathring Q &= L^2_0(\Omega).
\end{align*}
To indicate a divergence-free space, we set $H^1_\sigma(\Omega)^d = \{ v\in H^1(\Omega)^d \,\big|\, \mathrm{div}\, v = 0 \}$.
We use the notation $V_\sigma = V\cap H^1_\sigma(\Omega)^d$, $\mathring V_\sigma = \mathring V\cap H^1_\sigma(\Omega)^d$, $V_{n,\sigma} = V_n\cap H^1_\sigma(\Omega)^d$, and $V_{\tau,\sigma} = V_\tau\cap H^1_\sigma(\Omega)^d$.

Let us define bilinear forms $a_0$, $b$, and a trilinear form $a_1$ by 
{\allowdisplaybreaks
\begin{align*}
	a_0(u,v) &= \frac\nu2 \sum_{i,j=1}^d \int_\Omega \left( \frac{\partial u_i}{\partial u_j} + \frac{\partial u_j}{\partial u_i} \right) \left( \frac{\partial v_i}{\partial x_j} + \frac{\partial v_j}{\partial x_i} \right) \, dx \hspace{-10.5mm}&(u,v \in H^1(\Omega)^d), \\
	a_1(u,v,w) &= \int_\Omega \left\{ (u\cdot\nabla)v \right\} \cdot w\,dx &(u,v,w \in H^1(\Omega)^d), \\
	b(v,q) &= -\int_\Omega \mathrm{div}v\, q\,dx & (v\in H^1(\Omega)^d,\, q\in L^2(\Omega)).
\end{align*}
}The bilinear forms $a_0, b$ are continuous, and from Korn's inequality (\cite[Lemma 6.2]{KO88}) there exists a constant $\alpha>0$ such that
\begin{equation}
	a_0(v,v) \ge \alpha \|v\|_{H^1(\Omega)^d}^2 \qquad (\forall v\in V). \label{b.3}
\end{equation}

Concerning the trilinear term $a_1$, we obtain the following two lemmas.
\begin{Lem}
	{\rm(i)} When $d=2$, for all $u,v,w\in H^1(\Omega)^d$ it holds that
	\begin{equation}
		|a_1(u,v,w)| \le C \|u\|_{L^2(\Omega)^d}^{1/2} \|u\|_{H^1(\Omega)^d}^{1/2} \|v\|_{H^1(\Omega)^d} \|w\|_{L^2(\Omega)^d}^{1/2} \|w\|_{H^1(\Omega)^d}^{1/2}. \label{b.5}
	\end{equation}
	
	{\rm(ii)} When $d=2$ or $d=3$, for all $u,v,w\in H^1(\Omega)^d$ it holds that
	\begin{equation}
		|a_1(u,v,w)| \le C \|u\|_{L^2(\Omega)^d}^{1/4} \|u\|_{H^1(\Omega)^d}^{3/4} \|v\|_{H^1(\Omega)^d} \|w\|_{L^2(\Omega)^d}^{1/4} \|w\|_{H^1(\Omega)^d}^{3/4}. \label{b.6}
	\end{equation}
\end{Lem}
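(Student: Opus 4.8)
The plan is to reduce both parts to Hölder's inequality followed by the Ladyzhenskaya–Gagliardo–Nirenberg interpolation inequality on the bounded domain $\Omega$. First I would write, for $u,v,w\in H^1(\Omega)^d$,
\begin{equation*}
	|a_1(u,v,w)| \le \int_\Omega |u|\,|\nabla v|\,|w|\,dx \le \|u\|_{L^4(\Omega)^d}\,\|v\|_{H^1(\Omega)^d}\,\|w\|_{L^4(\Omega)^d},
\end{equation*}
by Hölder's inequality with the exponents $4,2,4$ (bounding $\|\nabla v\|_{L^2(\Omega)^{d\times d}}$ by $\|v\|_{H^1(\Omega)^d}$).

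The key ingredient is then the interpolation estimate
\begin{equation*}
	\|\phi\|_{L^4(\Omega)} \le C\,\|\phi\|_{L^2(\Omega)}^{1-\theta}\,\|\phi\|_{H^1(\Omega)}^{\theta}, \qquad \theta = \frac d4 \quad (d=2,3),
\end{equation*}
valid for every $\phi\in H^1(\Omega)$ (see e.g.~\cite{Lad69, Tem77}); it follows from the classical Gagliardo–Nirenberg inequality on $\mathbb R^d$ combined with the existence of a bounded extension operator $H^1(\Omega)\to H^1(\mathbb R^d)$, which is available because $\Omega$ is smooth. Applying this bound with $\theta=1/2$ (case $d=2$) to the two $L^4$-factors above — either componentwise, or by applying the scalar inequality to $|u|$ and $|w|$ and using $\bigl|\nabla|u|\bigr|\le|\nabla u|$ a.e.~so that $\|\,|u|\,\|_{H^1(\Omega)}\le C\|u\|_{H^1(\Omega)^d}$ — yields (i) at once. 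Taking instead $\theta=3/4$, which holds for $d=3$ and also, a fortiori (since $\|\phi\|_{L^2(\Omega)}\le\|\phi\|_{H^1(\Omega)}$), for $d=2$, yields (ii).

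Since the argument rests entirely on standard inequalities, I do not expect a genuine obstacle. The only points needing a little care are the passage from the scalar interpolation inequality to vector fields, and the fact that the interpolation inequality is used on the bounded domain $\Omega$ rather than on $\mathbb R^d$, which is why the smoothness of $\Omega$ (hence a Sobolev extension) is invoked. I would also note that the sharper exponent $1/2$ in (i) is special to two dimensions, reflecting the value $\theta=d/4$ of the interpolation exponent; as a by-product these bounds show that $a_1$ is a continuous trilinear form on $H^1(\Omega)^d$.
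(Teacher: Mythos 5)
Your argument is correct and follows essentially the same route as the paper: H\"older's inequality with exponents $4,2,4$ followed by the interpolation bound $\|\phi\|_{L^4(\Omega)}\le C\|\phi\|_{L^2(\Omega)}^{1-d/4}\|\phi\|_{H^1(\Omega)}^{d/4}$. The only cosmetic difference is that the paper obtains this $L^4$ bound via the Sobolev embedding $H^{d/4}(\Omega)\subset L^4(\Omega)$ together with interpolation of $H^{d/4}(\Omega)$ between $L^2(\Omega)$ and $H^1(\Omega)$, whereas you invoke the Gagliardo--Nirenberg inequality with an extension operator; both are standard and lead to the same estimate.
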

\begin{Rem}
	In particular, we see from (\ref{b.6}) that
	\begin{equation}
		|a_1(u,v,w)| \le C \|u\|_{H^1(\Omega)^d} \|v\|_{H^1(\Omega)^d} \|w\|_{H^1(\Omega)^d}. \label{b.7}
	\end{equation}
\end{Rem}
\begin{proof}
	By the Sobolev embedding $H^{1/2}(\Omega)\subset L^4(\Omega)$ (resp.~$H^{3/4}(\Omega)\subset L^4(\Omega)$) which is valid for $d=2$ (resp.~$d=2,3$), combined with an interpolation inequality between $L^2(\Omega)$ and $H^1(\Omega)$, we have
	\begin{align*}
		&\|u\|_{L^4(\Omega)^d} \le C\|u\|_{H^{1/2}(\Omega)^d} \le C\|u\|_{L^2(\Omega)^d}^{1/2} \|u\|_{H^1(\Omega)^d}^{1/2} & & (d=2). \\
		(\text{resp.}\quad &\|u\|_{L^4(\Omega)^d} \le C\|u\|_{H^{3/4}(\Omega)^d} \le C\|u\|_{L^2(\Omega)^d}^{1/4} \|u\|_{H^1(\Omega)^d}^{3/4} & & (d=2,3).)
	\end{align*}
	Therefore, since $|a_1(u, v, w)| \le C \|u\|_{L^4(\Omega)^d} \|v\|_{H^1(\Omega)^d} \|w\|_{L^4(\Omega)^d}$ by H\"older's inequality,
	we conclude (\ref{b.5}) (resp.~(\ref{b.6})).
\end{proof}

\begin{Lem} \label{Lem b.2}	
	{\rm (i)} For all $u\in V_{n,\sigma}$ and $v\in H^1(\Omega)^d$, $a_1(u,v,v) = 0$.
	
	{\rm (ii)} For all $u\in V_{\tau,\sigma}$ and $v\in H^1(\Omega)^d$, $a_1(u,v,v) = \frac12 \int_{\Gamma_1}u_n|v|^2\,ds$, and
	\begin{equation}
		|a_1(u, v, v)| \le \gamma_1\|u_n\|_{L^2(\Gamma_1)} \|v\|_{H^1(\Omega)^d}^2, \label{b.10}
	\end{equation}
	where $\gamma_1$ is a constant depending only on $\Omega$.
\end{Lem}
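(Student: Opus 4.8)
The plan is to deduce both parts from the integration-by-parts identity recorded in (\ref{a.4}): for divergence-free $u$ one has $a_1(u,v,v)=\frac12\int_\Gamma u_n|v|^2\,ds$. First I would establish this identity rigorously for $u\in H^1_\sigma(\Omega)^d$ and $v\in H^1(\Omega)^d$. Expanding, $a_1(u,v,v)=\sum_{i,j=1}^d\int_\Omega u_j\,\frac{\partial v_i}{\partial x_j}\,v_i\,dx=\frac12\int_\Omega u\cdot\nabla(|v|^2)\,dx$, and integration by parts turns this into $\frac12\int_\Gamma u_n|v|^2\,ds-\frac12\int_\Omega(\mathrm{div}\,u)|v|^2\,dx$, whose second term vanishes. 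To justify the integration by parts when $v$ lies merely in $H^1(\Omega)^d$ (so that $|v|^2$ need not belong to $H^1(\Omega)$), I would approximate $u$ and $v$ in $H^1(\Omega)^d$ by sequences in $C^\infty(\overline\Omega)^d$ and pass to the limit, using the Sobolev embedding $H^1(\Omega)\hookrightarrow L^4(\Omega)$ together with the trace estimate $\|w\|_{L^4(\Gamma)}\le C\|w\|_{H^1(\Omega)}$, both valid for $d=2,3$, to control the volume term $u\cdot\nabla(|v|^2)$ and the boundary term $u_n|v|^2$ respectively.

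Granting this identity, part (i) is immediate: if $u\in V_{n,\sigma}$, then $u=0$ on $\Gamma_0$ and $u_n=0$ on $\Gamma_1$, hence $u_n=0$ on all of $\Gamma=\overline\Gamma_0\cup\overline\Gamma_1$, so that $a_1(u,v,v)=0$. For part (ii), if $u\in V_{\tau,\sigma}$, then $u=0$ on $\Gamma_0$ (so $u_n=0$ there) whereas $u_n$ may be nonzero on $\Gamma_1$; therefore the boundary integral reduces to $\frac12\int_{\Gamma_1}u_n|v|^2\,ds$, which is the asserted formula.

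It remains to prove (\ref{b.10}). The Cauchy--Schwarz inequality on $\Gamma_1$ gives $\bigl|\int_{\Gamma_1}u_n|v|^2\,ds\bigr|\le\|u_n\|_{L^2(\Gamma_1)}\,\||v|^2\|_{L^2(\Gamma_1)}=\|u_n\|_{L^2(\Gamma_1)}\,\|v\|_{L^4(\Gamma_1)^d}^2$. Combining the trace operator $H^1(\Omega)^d\to H^{1/2}(\Gamma)^d$ with the Sobolev embedding $H^{1/2}(\Gamma_1)\hookrightarrow L^4(\Gamma_1)$ — which holds for $d=2,3$ since $\Gamma_1$ is $(d-1)$-dimensional — yields $\|v\|_{L^4(\Gamma_1)^d}\le C\|v\|_{H^1(\Omega)^d}$ with $C$ depending only on $\Omega$, whence (\ref{b.10}) with $\gamma_1=C^2/2$.

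The one genuinely delicate point is the density argument in the first step, since $|v|^2$ fails to be an $H^1$ function when $v$ is only in $H^1(\Omega)^d$; everything else amounts to tracking the portion of $\Gamma$ on which the normal trace $u_n$ survives, plus a standard trace--Sobolev estimate.
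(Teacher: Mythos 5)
Your proof is correct and follows essentially the same route as the paper: the paper derives the symmetrized identity $a_1(u,v,w)+a_1(u,w,v)=-\int_\Omega \mathrm{div}\,u\; v\cdot w\,dx+\int_\Gamma u_n\, v\cdot w\,ds$ and specializes to $w=v$, which is exactly your identity, and then obtains (\ref{b.10}) by the identical H\"older--trace--Sobolev chain ($H^1(\Omega)\to H^{1/2}(\Gamma_1)\hookrightarrow L^4(\Gamma_1)$). The only difference is that you spell out the density argument justifying the integration by parts when $v$ is merely in $H^1(\Omega)^d$, a detail the paper leaves implicit.
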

\begin{proof}
	By integration by parts, we have
	\begin{equation*}
		a_1(u,v,w) + a_1(u,w,v) = -\int_{\Omega} \mathrm{div}u\, v\cdot w\, dx + \int_\Gamma u_n v\cdot w\,ds,
	\end{equation*}
	from which the conclusion of (i) and the first assertion of (ii) follow.
	Combining the H\"older inequality $|a_1(u,v,v)| \le C \|u_n\|_{L^2(\Gamma_1)} \|v\|_{L^4(\Gamma_1)^d}^2$, the Sobolev embedding $H^{1/2}(\Gamma_1)\subset L^4(\Gamma_1)$ $(d=2,3)$, and the continuity of the trace operator $H^1(\Omega)\to H^{1/2}(\Gamma_1)$, we derive (\ref{b.10}).
\end{proof}
\begin{Rem}
	Whether $\gamma_1$ is small or not, especially when compared to $\alpha$ in (\ref{b.3}), is a very crucial point in our a priori estimates for LBCF (see Proposition \ref{Prop d.1}).
	This is why we distinguish $\gamma_1$ from other constants $C$ and do not combine $\gamma_1$ with them.
	As (i) above shows, this problem does not happen when we consider SBCF.
\end{Rem}

Furthermore, we introduce nonlinear functionals $j_\tau$ and $j_n$ by
\begin{equation*}
	j_\tau(\eta) = \int_{\Gamma_1} g|\eta|\,ds \quad (\eta\in L^2(\Gamma_1)^d) \quad\text{and}\quad j_n(\eta) = \int_{\Gamma_1} g|\eta|\,ds \quad (\eta\in L^2(\Gamma_1)),
\end{equation*}
where $g>0$ is a modulus of friction mentioned in Section \ref{Sec a}.
They are obviously nonnegative and positively homogeneous.
In addition, they are Lipschitz continuous when $g(t)\in L^2(\Gamma_1)$ for a.e.~$t\in(0, T)$.

The followings, which are readily obtainable consequences of standard trace and (solenoidal) extension theorems (\cite[Theorems I.1.5-6, Lemma I.2.2]{GiRa86}, see also \cite[Section 5.3]{KO88}), are frequently used in subsequent arguments.
\begin{Lem}\label{Lem b.1}
	\rm{(i)} For $v\in V_n$, it holds that $\|v_\tau\|_{H^{1/2}(\Gamma_1)^d} \le C\|v\|_{H^1(\Omega)^d}$.
	
	\rm{(ii)} For $\eta\in H^{1/2}(\Gamma_1)^d$ satisfying $\eta_n = 0 \text{ on }\Gamma_1$, there exists $v\in V_{n,\sigma}$ such that $v_\tau = \eta$ on $\Gamma_1$ and $\|v\|_{H^1(\Omega)^d} \le C \|\eta\|_{H^{1/2}(\Gamma_1)^d}$.
\end{Lem}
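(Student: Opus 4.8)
The plan is to obtain both statements directly from the classical trace theorem and the divergence-free (solenoidal) extension theorem; as the remark preceding the lemma indicates, no substantial analysis is involved, and the only structural hypothesis that really gets used is $\overline\Gamma_0\cap\overline\Gamma_1=\emptyset$.

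For (i) I would proceed as follows. By the trace theorem $\|v\|_{H^{1/2}(\Gamma)^d}\le C\|v\|_{H^1(\Omega)^d}$, and since $n$ is smooth on $\Gamma$ the tangential projection $\phi\mapsto\phi-(\phi\cdot n)n$ is bounded on $H^{1/2}(\Gamma)^d$ (a fact already recorded in Section~2), so $\|v_\tau\|_{H^{1/2}(\Gamma)^d}\le C\|v\|_{H^1(\Omega)^d}$. Composing with the bounded restriction $H^{1/2}(\Gamma)^d\to H^{1/2}(\Gamma_1)^d$ yields the claim; note that only $v\in H^1(\Omega)^d$ is used here, not the full membership $v\in V_n$.

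For (ii), let $\bar\eta\in L^2(\Gamma)^d$ be the extension of $\eta$ by zero, i.e. $\bar\eta=\eta$ on $\Gamma_1$ and $\bar\eta=0$ on $\Gamma_0$. Because $\overline\Gamma_0$ and $\overline\Gamma_1$ are disjoint closed sets covering $\Gamma$ (so that $\Gamma_0$ and $\Gamma_1$ are in fact unions of connected components of $\Gamma$), extension by zero is a bounded map $H^{1/2}(\Gamma_1)^d\to H^{1/2}(\Gamma)^d$, whence $\bar\eta\in H^{1/2}(\Gamma)^d$ with $\|\bar\eta\|_{H^{1/2}(\Gamma)^d}\le C\|\eta\|_{H^{1/2}(\Gamma_1)^d}$. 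The normal component of $\bar\eta$ vanishes on $\Gamma_1$ by the hypothesis $\eta_n=0$ and on $\Gamma_0$ by construction, so $\bar\eta\cdot n=0$ on $\Gamma$; in particular $\int_{\Gamma^{(i)}}\bar\eta\cdot n\,ds=0$ on every connected component $\Gamma^{(i)}$ of $\Gamma$, which is precisely the compatibility condition required by the solenoidal extension theorem. That theorem (\cite{GiRa86}) then produces $v\in H^1(\Omega)^d$ with $\mathrm{div}\,v=0$ in $\Omega$, $v|_\Gamma=\bar\eta$, and $\|v\|_{H^1(\Omega)^d}\le C\|\bar\eta\|_{H^{1/2}(\Gamma)^d}$. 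Since $v=0$ on $\Gamma_0$ and $v_n=\eta_n=0$ on $\Gamma_1$ we get $v\in V_{n,\sigma}$, while $v_\tau=v-v_nn=\eta$ on $\Gamma_1$; chaining the two estimates gives $\|v\|_{H^1(\Omega)^d}\le C\|\eta\|_{H^{1/2}(\Gamma_1)^d}$, as desired.

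I do not anticipate a genuine obstacle. The two points to watch are the two places where the structural hypotheses enter: $\overline\Gamma_0\cap\overline\Gamma_1=\emptyset$ is what makes extension by zero land in $H^{1/2}(\Gamma)^d$ with a norm bound, and $\eta_n=0$ is what makes the zero-flux condition of the solenoidal extension theorem hold. Once these are in place the argument is only a few lines, which is why the lemma is stated as an immediate consequence of standard results.
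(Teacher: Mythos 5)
Your argument is correct and is exactly the route the paper intends: the lemma is stated there without proof as a ``readily obtainable consequence'' of the standard trace and solenoidal extension theorems of \cite{GiRa86}, and your write-up simply makes explicit the two points that matter, namely that $\overline\Gamma_0\cap\overline\Gamma_1=\emptyset$ makes extension by zero bounded $H^{1/2}(\Gamma_1)^d\to H^{1/2}(\Gamma)^d$, and that $\eta_n=0$ gives the zero-flux compatibility condition componentwise. Nothing further is needed.
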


\begin{Lem} \label{Lem b.10}
	\rm{(i)} For $v\in V_\tau$, it holds that $\|v_n\|_{H^{1/2}(\Gamma_1)} \le C \|v\|_{H^1(\Omega)^d}$.
	
	\rm{(ii)} For $\eta\in H^{1/2}(\Gamma_1)$ (resp.~$\eta\in H^{1/2}(\Gamma_1)\cap L^2_0(\Gamma_1)$), there exists $v\in V_\tau$ (resp.~$v\in V_{\tau,\sigma}$) such that $v_n = \eta$ on $\Gamma_1$ and $\|v\|_{H^1(\Omega)^d} \le C \|\eta\|_{H^{1/2}(\Gamma_1)}$.
\end{Lem}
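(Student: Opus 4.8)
The plan is to read off both assertions from the standard trace theory for $H^1(\Omega)^d$, together with one structural observation: since $\overline\Gamma_0\cap\overline\Gamma_1=\emptyset$ by hypothesis, $\Gamma_1$ is clopen in $\Gamma$, i.e.~a union of connected components of $\Gamma$. Consequently $H^{1/2}(\Gamma)$ splits, up to equivalence of norms, as $H^{1/2}(\Gamma_0)\times H^{1/2}(\Gamma_1)$, so that extending a function given on $\Gamma_1$ by zero across $\Gamma_0$ loses nothing: the glued function lies in $H^{1/2}(\Gamma)$ with norm controlled by that of the piece on $\Gamma_1$. This is exactly what lets the normal-component analogue of Lemma \ref{Lem b.1} go through along the same lines.

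For (i), given $v\in V_\tau\subset H^1(\Omega)^d$, continuity of the trace operator gives $\|v\|_{H^{1/2}(\Gamma)^d}\le C\|v\|_{H^1(\Omega)^d}$; since $n$ is smooth on $\Gamma$ we have, as already recorded in Section 2, $\|v_n\|_{H^{1/2}(\Gamma)}\le C\|v\|_{H^{1/2}(\Gamma)^d}$, and restriction to $\Gamma_1$ only lowers the norm. Chaining these three estimates yields the claim; note that the defining property $v_\tau=0$ on $\Gamma_1$ is not even needed here.

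For (ii), given $\eta\in H^{1/2}(\Gamma_1)$, set $\psi:=\eta n$ on $\Gamma_1$ and $\psi:=0$ on $\Gamma_0$; by the observation above $\psi\in H^{1/2}(\Gamma)^d$ with $\|\psi\|_{H^{1/2}(\Gamma)^d}\le C\|\eta\|_{H^{1/2}(\Gamma_1)}$. Applying a bounded right inverse of the trace operator (\cite[Theorem I.1.6]{GiRa86}) produces $v\in H^1(\Omega)^d$ with $v|_\Gamma=\psi$ and $\|v\|_{H^1(\Omega)^d}\le C\|\eta\|_{H^{1/2}(\Gamma_1)}$. By construction $v=0$ on $\Gamma_0$, while on $\Gamma_1$ one has $v_n=\psi\cdot n=\eta$ and $v_\tau=\psi-(\psi\cdot n)n=0$, so $v\in V_\tau$ with $v_n=\eta$. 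For the solenoidal version, note first that any $v$ with $\mathrm{div}\,v=0$ and $v|_\Gamma=\psi$ must satisfy $\int_\Gamma v\cdot n\,ds=\int_{\Gamma_1}\eta\,ds=0$, which holds precisely because $\eta\in L^2_0(\Gamma_1)$; conversely, under this flux balance one takes the lifting $w$ of $\psi$ just constructed, observes $\mathrm{div}\,w\in L^2_0(\Omega)$, solves $\mathrm{div}\,z=\mathrm{div}\,w$ with $z\in\mathring V$ and $\|z\|_{H^1(\Omega)^d}\le C\|\mathrm{div}\,w\|_{L^2(\Omega)}$ (Bogovski\u\i's operator, or \cite[Lemma I.2.2]{GiRa86}), and puts $v:=w-z\in H^1_\sigma(\Omega)^d$; this has the same trace $\psi$ and the same bound, hence lies in $V_{\tau,\sigma}$.

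I do not expect a genuine obstacle here: the lemma is essentially bookkeeping on top of classical trace and divergence-equation results. The only points demanding attention are the reduction to an honest $H^{1/2}(\Gamma)$ extension problem — which rests on $\Gamma_0$ and $\Gamma_1$ being separated — and, in the solenoidal case, the flux-balance identity $\int_{\Gamma_1}\eta\,ds=0$, which is exactly why $L^2_0(\Gamma_1)$ must appear in the hypothesis.
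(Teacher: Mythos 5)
Your argument is correct and is precisely the route the paper intends: the lemma is stated without proof as a "readily obtainable consequence" of the trace and solenoidal extension theorems \cite[Theorems I.1.5-6, Lemma I.2.2]{GiRa86}, and your write-up simply fills in those citations (zero extension across $\Gamma_0$ justified by $\overline\Gamma_0\cap\overline\Gamma_1=\emptyset$, a right inverse of the trace, and the divergence-equation correction under the flux condition $\int_{\Gamma_1}\eta\,ds=0$). No gap; the bookkeeping is all in order.
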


The definition of  $\sigma(u,p)$ given in Section \ref{Sec a} becomes ambiguous when $(u,p)$ has only lower regularity, say $u\in H^1(\Omega)^d,\, p\in L^2(\Omega)$.
Thus we propose a redefinition of it, based on the following Green formula:
\begin{equation*}
	(-\nu\Delta u + \nabla p, v) + \int_\Gamma \sigma(u,p)\cdot v\, ds = a_0(u,v) + b(v,p) \qquad (\text{if}\quad \mathrm{div}\, u = 0).
\end{equation*}

\begin{Def} \label{Def b.1}
	Let $u(t)\in V_\sigma$, $p(t)\in Q$, $u'(t)\in L^2(\Omega)^d$, $f(t)\in L^2(\Omega)^d$.
	If (\ref{a.3}) holds in the distribution sense for a.e.~$t\in(0,T)$, that is,
	\begin{equation}
		(u',v) + a_0(u,v) + a_1(u,u,v) + b(v,p) = (f, v) \qquad (\forall v\in\mathring V), \label{b.50}
	\end{equation}
	then we define $\sigma = \sigma(u,p) \in (H^{1/2}(\Gamma_1)^d)'$ by
	\begin{equation}
		\left< \sigma, v \right>_{H^{1/2}(\Gamma_1)^d} = a_0(u,v) + b(v,p) - \left<F, v\right>_V \qquad (\forall v\in V), \label{b.1}
	\end{equation}
	where $F(t)\in V'$ is given by $\left< F, v \right>_V = (f,v) - (u',v) - a_1(u,u,v)$.
\end{Def}

The above $\sigma$ is well-defined by virtue of the trace and extension theorem.
It coincides with the previous definition when $(u,p)$ is sufficiently smooth.
In addition, by Lemmas \ref{Lem b.1} and \ref{Lem b.10}, $\sigma_\tau = \sigma - (\sigma\cdot n)n \in (H^{1/2}(\Gamma_1)^d)'$ and $\sigma_n = \sigma\cdot n \in H^{1/2}(\Gamma_1)'$ are characterized by
\begin{equation*}
	\begin{cases}
		\left< \sigma_\tau, \eta n \right>_{H^{1/2}(\Gamma_1)^d} = 0 & (\forall \eta\in H^{1/2}(\Gamma_1)), \\
		\left< \sigma_\tau, v_\tau \right>_{H^{1/2}(\Gamma_1)^d} = a_0(u,v) + b(v,p) - \left< F, v \right>_{V_n} & (\forall v\in V_n),
	\end{cases}
\end{equation*}
and
\begin{equation*}
	\left< \sigma_n, v_n \right>_{H^{1/2}(\Gamma_1)} = a_0(u,v) + b(v,p) - \left< F, v \right>_{V_\tau} \qquad (\forall v\in V_\tau),
\end{equation*}
respectively. By Lemma \ref{Lem b.1}(ii), $\sigma_\tau$ actually does not depend on $p$.

\section{Navier-Stokes Problem with SBCF}
\subsection{Weak formulations}
Throughout this section, we assume $f \!\in\! L^2(\Omega\times (0,T))^d$, $u_0\in V_{n,\sigma}$, and $g\in L^2(\Gamma_1\times(0,T))$.
Further regularity assumptions on these data will be given before Theorem \ref{Thm c.1}.
In addition, the barrier term $j_\tau$ is simply written as $j$.
A primal weak formulation of (\ref{a.3})--(\ref{a.5}) with (\ref{a.1}) is as follows:
\vspace{2mm}
\begin{Prob}{PDE-SBCF}
	For a.e.~$t\in(0,T)$, find $(u(t), p(t))\in V_n\times\mathring Q$ such that $u'(t) \in L^2(\Omega)^d$, $u(0) = u_0$, $\sigma_\tau$ is well-defined in the sense of Definition \ref{Def b.1}, $|\sigma_\tau| \le g$ a.e.~on $\Gamma_1$, and $\sigma_\tau\cdot u_\tau + g|u_\tau| = 0$ a.e.~on $\Gamma_1$.
\end{Prob}
\begin{Rem}
	More precisely, ``$|\sigma_\tau| \le g$" implies that $\sigma_\tau \in (H^{1/2}(\Gamma_1)^d)'$ actually belongs to $L^\infty_{1/g}(\Gamma_1)^d$ with $\|\sigma_\tau\|_{L^\infty_{1/g}(\Gamma_1)^d} \le 1$.
	In particular, $\sigma_\tau \in L^2(\Gamma_1)^d$.
\end{Rem}
Throughout this section, we refer to Problem PDE-SBCF just as Problem PDE.
Similar abbreviation will be made for other problems.

One can easily find that a classical solution of (\ref{a.3})--(\ref{a.5}) with (\ref{a.1}) solves Problem PDE, and that a sufficiently smooth solution of Problem PDE is a classical solution.
As the next theorem shows, Problem PDE is equivalent to the following variational inequality problem.
\vspace{2mm}
\begin{Prob}{VI$_\sigma$-SBCF}
	For a.e.~$t\in(0,T)$, find $u(t)\in V_{n,\sigma}$  such that $u'(t) \in L^2(\Omega)^d$, $u(0) = u_0$, and 
	\begin{equation}
		(u', v-u) + a_0(u, v-u) + a_1(u, u, v-u) + j(v_\tau) - j(u_\tau) \ge (f, v-u) \quad (\forall v\in V_{n,\sigma}). \label{c.4}
	\end{equation}
\end{Prob}

\begin{Thm}\label{Thm c.2}
	Problems $\mathrm{PDE}$ and $\mathrm{VI}_\sigma$ are equivalent.
\end{Thm}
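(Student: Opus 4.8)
The plan is to establish the two implications separately, making repeated use of the trace/extension theory packaged in Lemma \ref{Lem b.1} and of Definition \ref{Def b.1}. First I would show that a solution $(u,p)$ of Problem PDE gives a solution $u$ of Problem VI$_\sigma$. The key observation is that for $v\in V_{n,\sigma}$ we have $b(v-u,p) = 0$ since both $v$ and $u$ are divergence-free, so by the defining identity (\ref{b.1}) and the structure of $F$,
\begin{equation*}
	(u',v-u) + a_0(u,v-u) + a_1(u,u,v-u) - (f,v-u) = \langle F, v-u\rangle_V - a_0(u,v-u) - a_1(u,u,v-u) + \dots
\end{equation*}
reduces to $\langle \sigma_\tau, v_\tau - u_\tau\rangle$; here one uses that $\sigma_\tau$ pairs only against the tangential component and that $(v-u)_n = 0$ on $\Gamma_1$ since $v,u\in V_n$. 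Then the two friction conditions $|\sigma_\tau|\le g$ and $\sigma_\tau\cdot u_\tau = -g|u_\tau|$, combined with the pointwise inequality $\langle\sigma_\tau, v_\tau\rangle \ge -g|v_\tau|$ valid a.e.\ on $\Gamma_1$, yield $\langle\sigma_\tau, v_\tau - u_\tau\rangle \ge -j(v_\tau) + j(u_\tau)$, which is exactly (\ref{c.4}). Note that this direction does not even need (\ref{b.50}); we are given $\sigma_\tau$ is well-defined.

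For the converse, suppose $u$ solves Problem VI$_\sigma$. I would first recover the pressure. Testing (\ref{c.4}) with $v = u \pm w$ for $w\in \mathring V_\sigma$ (so $j(u_\tau\pm w_\tau) = j(u_\tau)$ since $w$ vanishes on all of $\Gamma$) gives the equality
\begin{equation*}
	(u',w) + a_0(u,w) + a_1(u,u,w) = (f,w) \qquad (\forall w\in\mathring V_\sigma),
\end{equation*}
and the standard de Rham / Ne\v{c}as argument produces $p(t)\in \mathring Q$ with $(u',v) + a_0(u,v) + a_1(u,u,v) + b(v,p) = (f,v)$ for all $v\in\mathring V$, i.e.\ (\ref{b.50}) holds, so $\sigma_\tau = \sigma_\tau(u)$ is well-defined by Definition \ref{Def b.1}. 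Next, again test (\ref{c.4}) with $v = u\pm w$ but now for $w\in V_{n,\sigma}$: the $\pm$ trick alone no longer kills $j$, but using $j(u_\tau + w_\tau) - j(u_\tau) \le j(w_\tau)$ and $j(u_\tau - w_\tau) - j(u_\tau)\le j(w_\tau)$ one gets $|(u'+ \dots - f, w)| \le j(w_\tau)$, which by the identity (\ref{b.1}) means $|\langle\sigma_\tau, w_\tau\rangle| \le j(w_\tau) = \|w_\tau\|_{L^1_g(\Gamma_1)}$ for all $w\in V_{n,\sigma}$. By Lemma \ref{Lem b.1}(ii) the tangential traces $w_\tau$ exhaust a dense subset of the relevant trace space, so this extends to show $\sigma_\tau \in L^\infty_{1/g}(\Gamma_1)^d$ with norm $\le 1$, i.e.\ $|\sigma_\tau|\le g$ a.e.\ on $\Gamma_1$. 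Finally, returning to (\ref{c.4}) with general $v\in V_{n,\sigma}$ and substituting the identity $\langle\sigma_\tau, v_\tau - u_\tau\rangle = (u'+\dots-f, u - v)$ gives $\langle\sigma_\tau, v_\tau\rangle + j(v_\tau) \ge \langle\sigma_\tau, u_\tau\rangle + j(u_\tau)$ for all admissible $v_\tau$; taking $v_\tau = 0$ and $v_\tau = 2u_\tau$ pins down $\langle\sigma_\tau, u_\tau\rangle + j(u_\tau) = 0$, which is the pointwise relation $\sigma_\tau\cdot u_\tau + g|u_\tau| = 0$ a.e.\ on $\Gamma_1$ (using $|\sigma_\tau|\le g$ for the pointwise reconstruction). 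Also $u\in V_{n,\sigma}\subset V_n$ so $u_n = 0$ on $\Gamma_1$ is automatic.

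The main obstacle is the pressure recovery and, more delicately, the passage from the integrated inequality $|\langle\sigma_\tau, w_\tau\rangle|\le j(w_\tau)$ to the pointwise bound $|\sigma_\tau|\le g$: one must know that $\{w_\tau : w\in V_{n,\sigma}\}$ is large enough (dense in $L^1_g(\Gamma_1)^d$, or at least norming) so that the functional $\sigma_\tau$ lifts to $L^\infty_{1/g}(\Gamma_1)^d = (L^1_g(\Gamma_1)^d)'$, the duality being the one cited after Lemma 2.1 of \cite{F94}. The solenoidal extension in Lemma \ref{Lem b.1}(ii) is exactly what makes this work, since it shows every $\eta\in H^{1/2}(\Gamma_1)^d$ with $\eta_n = 0$ is some $w_\tau$, and $H^{1/2}(\Gamma_1)^d$ is dense in $L^1_g(\Gamma_1)^d$ when $g\in L^2(\Gamma_1)$. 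Everything else is bookkeeping with the two friction conditions and the $v\mapsto u\pm w$ test-function manipulations standard for variational inequalities of the second kind.
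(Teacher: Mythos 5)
Your proposal is correct and follows essentially the same route as the paper: the forward direction uses the Green-formula identity to reduce the inequality to $\int_{\Gamma_1}(g|v_\tau|-\sigma_\tau\cdot v_\tau)\,ds\ge 0$, and the converse recovers $p$ via de Rham from testing with $u\pm w$, $w\in\mathring V_\sigma$, then derives $|\langle\sigma_\tau,w_\tau\rangle|\le j(w_\tau)$ by the triangle inequality, extends to $L^\infty_{1/g}(\Gamma_1)^d=(L^1_g(\Gamma_1)^d)'$ via Lemma \ref{Lem b.1}(ii) and density, and finally pins down $\sigma_\tau\cdot u_\tau+g|u_\tau|=0$ pointwise. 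The only cosmetic difference is that the paper obtains the last condition from $v=0$ alone together with the pointwise bound $|\sigma_\tau|\le g$, whereas you also use $v=2u$; both are fine.
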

\begin{Rem}
	The precise meaning of ``equivalent" is that if $(u,p)$ solves Problem PDE, $u$ solves Problem VI$_\sigma$;
	if $u$ solves Problem VI$_\sigma$, there exists unique $p$ such that $(u,p)$ solves Problem PDE.
	Hereafter we will frequently use the terminology ``equivalent" in a similar sense.
\end{Rem}
\begin{proof}
	Let $(u, p)$ be a solution of Problem PDE. Then it follows that
	\begin{equation}
		(u',v) + a_0(u,v) + a_1(u,u,v) + b(v,p) - (\sigma_\tau,v_\tau)_{L^2(\Gamma_1)^d} = (f,v) \quad (\forall v\in V_n). \label{c.2}
	\end{equation}
	Using this equation together with $|\sigma_\tau| \le g$ and $\sigma_\tau\cdot u_\tau + g|u_\tau| = 0$, we have
	\begin{align*}
		&\quad (u', v-u) + a_0(u, v-u) + a_1(u, u, v-u) + j(v_\tau) - j(u_\tau) - (f, v-u) \\
		&=-(\sigma_\tau, v_\tau - u_\tau)_{L^2(\Gamma_1)^d} + j(v_\tau) - j(u_\tau) = \int_{\Gamma_1} (g|v_\tau| - \sigma_\tau v_\tau)ds \ge 0,
	\end{align*}
	for all $v\in V_{n,\sigma}$. Hence $u$ is a solution of Problem VI$_\sigma$.
	
	Next, let $u$ be a solution of Problem VI$_\sigma$. Taking $u\pm v$ as a test function in (\ref{c.4}), with arbitrary $v\in\mathring V_\sigma$, we find that
	\begin{equation}
		(u', v) + a_0(u, v) + a_1(u, u, v) = (f, v) \qquad (\forall v\in\mathring V_\sigma). \label{c.33}
	\end{equation}
	By a standard theory (see \cite[Propositions I.1.1 and I.1.2]{Tem77}), there exists unique $p\in\mathring Q$ such that (\ref{b.50}) holds.
	Therefore, $\sigma_\tau \in (H^{1/2}(\Gamma_1)^d)'$ is well-defined, and thus
	\begin{equation*}
		(u', v) + a_0(u, v) + a_1(u, u, v) + b(v, p) - \left< \sigma_\tau, v_\tau \right>_{H^{1/2}(\Gamma_1)^d} = (f, v) \qquad (\forall v\in V_n).
	\end{equation*}
	Combining this equation with (\ref{c.4}), we obtain
	\begin{equation}
		-\left< \sigma_\tau, v_\tau - u_\tau \right>_{H^{1/2}(\Gamma_1)^d} \le \int_{\Gamma_1} g(|v_\tau| - |u_\tau|)ds \qquad (\forall v\in V_{n,\sigma}), \label{c.5}
	\end{equation}
	and as a result of triangle inequality, $|\left< \sigma_\tau, v_\tau \right>_{H^{1/2}(\Gamma_1)^d}| \le \int_{\Gamma_1}g|v_\tau|\,ds$ for $v\in V_{n,\sigma}$.
	In view of Lemma \ref{Lem b.1}(ii), this implies that for $\eta\in H^{1/2}(\Gamma_1)^d$
	\begin{equation*}
		|\left< \sigma_\tau, \eta \right>_{H^{1/2}(\Gamma_1)^d}| = |\left< \sigma_\tau, \eta_\tau \right>_{H^{1/2}(\Gamma_1)^d}| \le \|\eta_\tau\|_{L^1_{g}(\Gamma_1)^d} \le \|\eta\|_{L^1_{g}(\Gamma_1)^d}.
	\end{equation*}
	By a density argument, we can extend $\sigma_\tau$ to an element of $(L^1_g(\Gamma)^d)'$ such that
	\begin{equation*}
		|\left< \sigma_\tau, \eta \right>_{L^1_g(\Gamma_1)^d}| \le \|\eta\|_{L^1_g(\Gamma_1)^d} \qquad (\forall \eta\in L^1_g(\Gamma_1)^d).
	\end{equation*}
	Since $(L^1_g(\Gamma_1)^d)' = L^\infty_{1/g}(\Gamma_1)^d$, we conclude $|\sigma_\tau| \le g$.
	Then $\sigma_\tau\cdot u_\tau + g|u_\tau| = 0$ follows from (\ref{c.5}) with $v=0$.
	Hence $(u,p)$ is a solution of Problem PDE.
\end{proof}

\subsection{Main theorem. Proof of uniqueness.}
We are now in a position to state our main theorem. We assume:
\begin{enumerate}[\qquad(S1)]
	\item $f\in H^1(0,T; L^2(\Omega)^d)$. \label{S1}
	\item $g\in H^1(0, T; L^2(\Gamma_1))$ with $g(0)\in H^1(\Gamma_1)$. \label{S2}
	\item $u_0\in H^2(\Omega)^d\cap V_{n,\sigma}$, and SBCF is satisfied at $t=0$, namely, \label{S3}
		\begin{equation*}
			|\sigma_\tau(u_0)| \le g(0) \quad\text{and}\quad \sigma_\tau(u_0) u_{0\tau} + g(0)|u_{0\tau}| = 0 \quad\text{a.e.~on }\; \Gamma_1.
		\end{equation*}
\end{enumerate}
Note that $\sigma_\tau(u_0)$ can be defined in a usual sense.

\begin{Thm}\label{Thm c.1}
	Under $\mathrm{(S\ref{S1})}$--$\mathrm{(S\ref{S3})}$, when $d=2$ there exists a unique solution $u$ of Problem $\mathrm{VI}_\sigma$ such that
	\begin{equation*}
		u\in L^\infty(0,T; V_{n,\sigma}), \qquad u'\in L^\infty(0,T; L^2(\Omega)^d)\cap L^2(0,T; V_{n,\sigma}).
	\end{equation*}
	When $d=3$, the same conclusion holds on some smaller time interval $(0,T')$.
\end{Thm}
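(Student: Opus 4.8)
The plan is to prove uniqueness directly from the variational inequality, and existence by regularizing the barrier $j$, solving the regularized equation by Galerkin's method, deriving a priori estimates uniform in the two approximation parameters, and passing to the limit. For uniqueness, let $u_1,u_2$ be two solutions with the stated regularity and set $w=u_1-u_2$. Taking $v=u_2$ in (\ref{c.4}) written for $u_1$ and $v=u_1$ in (\ref{c.4}) written for $u_2$ and adding, the terms $j(u_{1\tau})-j(u_{2\tau})$ cancel, so $\tfrac12\tfrac{d}{dt}\|w\|^2+a_0(w,w)\le-a_1(w,u_2,w)$, where I used the identity $a_1(u_1,u_1,w)-a_1(u_2,u_2,w)=a_1(u_1,w,w)+a_1(w,u_2,w)$ and Lemma~\ref{Lem b.2}(i) (since $u_1(t)\in V_{n,\sigma}$). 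Estimating $|a_1(w,u_2,w)|$ by (\ref{b.5}) when $d=2$ and by (\ref{b.6}) when $d=3$, absorbing $\|w\|_{H^1}$ into $a_0(w,w)$ via (\ref{b.3}) and Young's inequality, and using $u_2\in L^\infty(0,T;V_{n,\sigma})$, Gronwall's lemma together with $w(0)=0$ forces $w\equiv0$; this works in both dimensions.

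For existence, replace $j$ by $j_\varepsilon(\eta)=\int_{\Gamma_1}g\,\rho_\varepsilon(\eta)\,ds$ with $\rho_\varepsilon$ a smooth convex approximation of $|\cdot|$ from above, so that $\beta_\varepsilon:=\nabla\rho_\varepsilon$ satisfies $|\beta_\varepsilon|\le1$, $\beta_\varepsilon(0)=0$, and $\nabla^2\rho_\varepsilon\ge0$; the regularized problem is $(u_\varepsilon',v)+a_0(u_\varepsilon,v)+a_1(u_\varepsilon,u_\varepsilon,v)+(g\beta_\varepsilon(u_{\varepsilon\tau}),v_\tau)_{L^2(\Gamma_1)}=(f,v)$ for all $v\in V_{n,\sigma}$. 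The key preliminary step is to \emph{adapt $u_0$}: let $u_{0\varepsilon}\in V_{n,\sigma}$ be the (unique, by monotonicity plus coercivity) solution of the regularized stationary slip problem $a_0(u_{0\varepsilon},v)+(g(0)\beta_\varepsilon(u_{0\varepsilon\tau}),v_\tau)_{L^2(\Gamma_1)}=(-\nu\Delta u_0,v)$ $(\forall v\in V_{n,\sigma})$. Using the Green formula of Section~2 and (S\ref{S3}), one checks that $u_0$ itself solves the corresponding non-regularized stationary variational inequality; hence $u_{0\varepsilon}\to u_0$ in $V_{n,\sigma}$ by the standard convergence theory for regularized variational inequalities, and---crucially---since the stress data $-g(0)\beta_\varepsilon(u_{0\varepsilon\tau})$ is pointwise bounded by $g(0)\in H^1(\Gamma_1)$, uniformly in $\varepsilon$, the $H^2$-regularity theory for the stationary slip problem (in the spirit of \cite{S04}) yields $\|u_{0\varepsilon}\|_{H^2(\Omega)^d}\le C$ uniformly in $\varepsilon$. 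By construction $\sigma_\tau(u_{0\varepsilon})=-g(0)\beta_\varepsilon(u_{0\varepsilon\tau})$ on $\Gamma_1$.

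Now fix $\varepsilon$, pick a Galerkin basis $\{w_k\}$ of $V_{n,\sigma}$ with $w_1$ proportional to $u_{0\varepsilon}$, and let $u_{\varepsilon m}$ solve the finite-dimensional system with $u_{\varepsilon m}(0)=u_{0\varepsilon}$; Carath\'eodory's theorem gives a local solution, continued by the bounds below. Testing with $u_{\varepsilon m}$, Lemma~\ref{Lem b.2}(i) annihilates the trilinear term and $(g\beta_\varepsilon(u_{\varepsilon m\tau}),u_{\varepsilon m\tau})_{L^2(\Gamma_1)}\ge0$ by convexity, which, via (\ref{b.3}), gives the energy bound in $L^\infty(0,T;L^2)\cap L^2(0,T;V_{n,\sigma})$ together with the pointwise inequality $\|u_{\varepsilon m}(t)\|_{H^1}\le C(\|f(t)\|+\|u_{\varepsilon m}'(t)\|)$. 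Evaluating the equation at $t=0$, testing with $u_{\varepsilon m}'(0)$ and integrating $a_0$ by parts, the boundary contributions $\sigma_\tau(u_{0\varepsilon})$ and $g(0)\beta_\varepsilon(u_{0\varepsilon\tau})$ cancel (this is where $u_{\varepsilon m}(0)=u_{0\varepsilon}$ and the compatibility are used), leaving only terms controlled by $\|f(0)\|$ (bounded by (S\ref{S1})) and $\|u_{0\varepsilon}\|_{H^2}$, so $\|u_{\varepsilon m}'(0)\|\le C$ uniformly. For the main estimate, differentiate the Galerkin equation in $t$ (legitimate by (S\ref{S1}), (S\ref{S2}) and smoothness of $\beta_\varepsilon$) and test with $u_{\varepsilon m}'$: the term $a_1(u_{\varepsilon m},u_{\varepsilon m}',u_{\varepsilon m}')$ vanishes by Lemma~\ref{Lem b.2}(i), the second-order boundary term $\int_{\Gamma_1}g\,(\nabla^2\rho_\varepsilon(u_{\varepsilon m\tau})u_{\varepsilon m\tau}')\cdot u_{\varepsilon m\tau}'\,ds\ge0$ is discarded, $(g'\beta_\varepsilon(u_{\varepsilon m\tau}),u_{\varepsilon m\tau}')_{L^2(\Gamma_1)}$ is handled via $|\beta_\varepsilon|\le1$, the trace theorem and (S\ref{S2}), and $(f',u_{\varepsilon m}')$ via (S\ref{S1}). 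When $d=2$, (\ref{b.5}) bounds $a_1(u_{\varepsilon m}',u_{\varepsilon m},u_{\varepsilon m}')$ and Gronwall's lemma with the $L^1(0,T)$-coefficient $\|u_{\varepsilon m}\|_{H^1}^2$ (from the energy bound) yields a global bound on $u_{\varepsilon m}'$ in $L^\infty(0,T;L^2)\cap L^2(0,T;V_{n,\sigma})$, hence on $u_{\varepsilon m}$ in $L^\infty(0,T;V_{n,\sigma})$; when $d=3$, (\ref{b.6}) combined with $\|u_{\varepsilon m}\|_{H^1}^2\le C(\|f\|^2+\|u_{\varepsilon m}'\|^2)$ produces instead a Riccati-type differential inequality for $\|u_{\varepsilon m}'\|^2$, so the bound holds only on a smaller interval $(0,T')$ depending on the data.

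Finally, let $m\to\infty$ with $\varepsilon$ fixed: the uniform bounds and the Aubin--Lions lemma give strong convergence of $u_{\varepsilon m}$ in $C([0,T];L^2(\Omega)^d)$, hence (after interpolation) strong trace convergence on $\Gamma_1$ sufficient to pass to the limit in $a_1$ and in $\beta_\varepsilon(u_{\varepsilon m\tau})$, so $u_\varepsilon$ solves the regularized problem with $u_\varepsilon(0)=u_{0\varepsilon}$; then let $\varepsilon\to0$, where the $\varepsilon$-independent bounds survive, $u_{0\varepsilon}\to u_0$, and the subgradient inequality $j_\varepsilon(v_\tau)\ge j_\varepsilon(u_{\varepsilon\tau})+(g\beta_\varepsilon(u_{\varepsilon\tau}),v_\tau-u_{\varepsilon\tau})_{L^2(\Gamma_1)}$ passes, via weak convergence, $j_\varepsilon\to j$ and lower semicontinuity, to (\ref{c.4}); the resulting $u$ has the stated regularity and satisfies $u(0)=u_0$. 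I expect the genuine difficulty to be the construction of the adapted datum $u_{0\varepsilon}$ with a uniform $H^2$-bound: this is exactly where (S\ref{S3}) is indispensable, and where one must use the monotonicity of $\beta_\varepsilon$ (not its Lipschitz constant, which blows up as $\varepsilon\to0$) to extract stationary regularity---closing the differentiated time-estimate is then routine in $d=2$ but in $d=3$ only gives local existence.
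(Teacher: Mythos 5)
Your proposal follows essentially the same route as the paper: regularize $j$ by a smooth convex $j_\epsilon$, adapt $u_0$ by solving the regularized stationary problem (exactly where (S\ref{S3}) enters, as you correctly identify), run Galerkin with $w_1=u_0^\epsilon$, bound $\|u_m'(0)\|_{L^2(\Omega)^d}$ via the stationary relation so that the boundary friction term is absorbed, differentiate the Galerkin system in time, and pass to the limits in $m$ and then $\epsilon$. The only cosmetic deviations are that the paper treats $d=3$ by a continuation argument on $\gamma\|u_m(t)\|_{H^1(\Omega)^d}$ rather than your Riccati-type differential inequality, and its uniqueness proof deliberately refrains from invoking Lemma~\ref{Lem b.2}(i) so that the argument carries over verbatim to LBCF; both variants are equivalent in substance.
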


We call the solution in the above theorem a \emph{strong solution} of Problem VI$_\sigma$.
First we prove the uniqueness of a strong solution.
The existence will be proved in Section \ref{Sec 3.4} after some additional preparations.
\begin{Prop} \label{Prop c.4}
	If $u_1$ and $u_2$ are strong solutions of Problem $\mathrm{VI}_\sigma$, then $u_1 = u_2$.
\end{Prop}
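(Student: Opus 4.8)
The plan is to prove uniqueness by the standard energy method for variational inequalities: write the inequality \eqref{c.4} for $u_1$ tested against $v = u_2$, and for $u_2$ tested against $v = u_1$, then add. The two barrier terms $j(u_{2\tau}) - j(u_{1\tau})$ and $j(u_{1\tau}) - j(u_{2\tau})$ cancel exactly, which is the whole point of the monotone-operator structure, so no estimate on $j$ is needed. Setting $w = u_1 - u_2 \in V_{n,\sigma}$, addition yields
\begin{equation*}
	-(w', w) - a_0(w, w) - a_1(u_1, u_1, u_1 - u_2) - a_1(u_2, u_2, u_2 - u_1) \ge 0,
\end{equation*}
i.e. $(w', w) + a_0(w, w) \le a_1(u_1, u_1, w) - a_1(u_2, u_2, w)$. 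Since $w(0) = u_1(0) - u_2(0) = u_0 - u_0 = 0$, and $w \in L^\infty(0,T; V_{n,\sigma})$ with $w' \in L^2(0,T; L^2(\Omega)^d)$, the term $(w', w)$ integrates to $\frac12 \frac{d}{dt}\|w\|_{L^2(\Omega)^d}^2$ in the sense of a.e.-differentiable scalar functions.

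Next I would handle the trilinear difference. Write $a_1(u_1, u_1, w) - a_1(u_2, u_2, w) = a_1(w, u_1, w) + a_1(u_2, w, w)$. For the SBCF class one has $u_2 \in V_{n,\sigma}$, so by Lemma \ref{Lem b.2}(i), $a_1(u_2, w, w) = 0$ — this is exactly where the slip condition $u_n = 0$ on $\Gamma_1$ is crucial and where the argument is cleaner than it would be for LBCF. Thus the right-hand side collapses to $a_1(w, u_1, w)$, which I estimate using Lemma 2.1: in either the $d=2$ bound \eqref{b.5} or the $d=2,3$ bound \eqref{b.6}, $|a_1(w, u_1, w)| \le C \|w\|_{L^2}^{1/2}\|w\|_{H^1}^{1/2}\|u_1\|_{H^1}\|w\|_{L^2}^{1/2}\|w\|_{H^1}^{1/2} = C\|u_1\|_{H^1}\|w\|_{L^2}\|w\|_{H^1}$ (using $d=2$; for $d=3$ the exponents are $1/4, 3/4$ and one still absorbs into $\|w\|_{H^1}^{2}$ with a Young's inequality factor). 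Then Young's inequality splits this as $\frac{\alpha}{2}\|w\|_{H^1}^2 + C(\alpha)\|u_1\|_{H^1}^2\|w\|_{L^2}^2$, and the coercivity \eqref{b.3} absorbs the $\frac{\alpha}{2}\|w\|_{H^1}^2$ into $a_0(w,w)$. Since $u_1 \in L^\infty(0,T; V_{n,\sigma})$, the coefficient $\|u_1(t)\|_{H^1}^2$ is in $L^\infty(0,T)$ (for $d=3$, replace $T$ by $T'$ throughout).

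This leaves the differential inequality $\frac{d}{dt}\|w\|_{L^2(\Omega)^d}^2 \le C(t)\|w\|_{L^2(\Omega)^d}^2$ with $C \in L^\infty(0,T)$ and $\|w(0)\|_{L^2(\Omega)^d} = 0$. Grönwall's lemma then forces $\|w(t)\|_{L^2(\Omega)^d} = 0$ for a.e.~$t$, hence $u_1 = u_2$. I do not expect any serious obstacle here; the only points requiring mild care are (a) justifying that $t \mapsto \|w(t)\|_{L^2}^2$ is absolutely continuous with derivative $2(w', w)$ — this follows from the regularity $w \in L^2(0,T; V_{n,\sigma})$, $w' \in L^2(0,T; L^2)$ via the Lions–Magenes lemma (cf.~\cite[Lemma III.1.2]{Tem77}) — and (b) ensuring the $a_1$ estimate is uniform in $t$, which is where the strong-solution bound $u_1 \in L^\infty(0,T; V_{n,\sigma})$ is consumed. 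The contrast with the leak case is worth flagging: there the analogue of $a_1(u_2, w, w)$ would not vanish and one would pick up a boundary term controlled only by $\gamma_1\|u_{2n}\|_{L^2(\Gamma_1)}$, which is why uniqueness for LBCF is delicate; here it is immediate.
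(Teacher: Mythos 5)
Your proof is correct and follows essentially the same route as the paper: test the inequality for $u_1$ with $v=u_2$ and vice versa, cancel the two $j$-terms by symmetry, decompose the trilinear difference, absorb via Korn's inequality \eqref{b.3} and Young, and conclude with Gr\"onwall from $w(0)=0$. The one point of divergence is your treatment of $a_1(u_2,w,w)$: you discard it using Lemma \ref{Lem b.2}(i), which is legitimate here since $u_2\in V_{n,\sigma}$, whereas the paper deliberately keeps this term and estimates it by \eqref{b.6} (yielding the coefficient $\|u_2\|_{H^1}^8$ after Young), precisely so that the identical argument transfers verbatim to the leak problem --- this is the content of the Remark following Proposition \ref{Prop c.4}. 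For that reason your closing comment that uniqueness for LBCF is ``delicate'' overstates the issue: one does not need the boundary bound \eqref{b.10} there at all; the interior estimate \eqref{b.6} controls $a_1(u_2,w,w)$ regardless of whether $u_{2n}$ vanishes on $\Gamma_1$, which is exactly why the paper's version of the proof is stated so as to avoid Lemma \ref{Lem b.2}(i). Your shortcut buys a slightly cleaner SBCF proof (only one trilinear term to estimate, and a milder Gr\"onwall coefficient); the paper's version buys portability to Section 4 at no real cost.
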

\begin{proof}
	Taking $v=u_2$ and $v=u_1$ in (\ref{c.4}) for $u_1$ and that for $u_2$ respectively, and adding the resulting two inequalities, for a.e.~$t\in (0,T)$ we obtain
	\begin{align}
		&\quad (u_1'-u_2', u_1-u_2) + a_0(u_1-u_2, u_1-u_2) \notag \\
		&\le a_1(u_1, u_1, u_2 - u_1) + a_1(u_2, u_2, u_1 - u_2) \notag \\
		&= -a_1(u_1-u_2, u_2, u_1-u_2) - a_1(u_2, u_1-u_2, u_1-u_2). \label{c.12}
	\end{align}
	We deduce from (\ref{b.6}), together with Young's inequality, that
	\begin{align*}
		|a_1(u_1-u_2, u_2, u_1-u_2)| &\le C \|u_1-u_2\|_{L^2(\Omega)^d}^{1/2} \|u_1-u_2\|_{H^1(\Omega)^d}^{3/2} \|u_2\|_{H^1(\Omega)^d} \\
			&\le \frac\alpha2 \|u_1- u_2\|_{H^1(\Omega)^d}^2 + C\|u_2\|_{H^1(\Omega)^d}^2 \|u_1- u_2\|_{L^2(\Omega)^d}^2,
	\end{align*}
	\vspace{-5mm}
	\begin{align*}
		|a_1(u_2, u_1-u_2, u_1-u_2)| &\le C \|u_2\|_{H^1(\Omega)^d} \|u_1 - u_2\|_{H^1(\Omega)^d}^{7/4} \|u_1 - u_2\|_{L^2(\Omega)^d}^{1/4} \\
			&\le \frac\alpha2 \|u_1- u_2\|_{H^1(\Omega)^d}^2 + C\|u_2\|_{H^1(\Omega)^d}^8 \|u_1- u_2\|_{L^2(\Omega)^d}^2.
	\end{align*}
	Combining (\ref{b.3}) and these estimates with (\ref{c.12}), we have
	\begin{equation*}
		\frac d{dt} \|u_1 - u_2\|_{L^2(\Omega)^d}^2 \le C(\|u_2\|_{H^1(\Omega)^d}^2 + \|u_2\|_{H^1(\Omega)^d}^8) \|u_1 - u_2\|_{L^2(\Omega)^d}^2.
	\end{equation*}
	By Gronwall's inequality, we conclude
	\begin{equation*}
		\|u_1(t) - u_2(t)\|_{L^2(\Omega)^d}^2 \le e^{\int_0^t C(\|u_2\|_{H^1(\Omega)^d}^2 + \|u_2\|_{H^1(\Omega)^d}^8)\, dt} \|u_1(0) - u_2(0)\|_{L^2(\Omega)^d}^2 = 0,
	\end{equation*} 
	since $u_1(0) = u_2(0) = u_0$. (Note that $\int_0^t (\|u_2\|_{H^1(\Omega)^d}^2 + \|u_2\|_{H^1(\Omega)^d}^8)\, dt$ remains finite because $u\in L^\infty(0, T; H^1(\Omega)^d.)$
	Thus $u_1(t) = u_2(t)$.
\end{proof}
\begin{Rem}
	In the case of SBCF here, the last term of (\ref{c.12}) vanishes, according to Lemma \ref{Lem b.2}(i).
	We did not use that fact because we would like to make our proof of uniqueness remain unchanged when we deal with LBCF.
\end{Rem}

Concerning the associated pressure, we find:
\begin{Prop}\label{Prop c.5}
	Under the assumptions of Theorem $\ref{Thm c.1}$, let $u$ be the strong solution of Problem $\mathrm{VI}_\sigma$, and $p$ be the associated pressure obtained in the proof of Theorem $\ref{Thm c.2}$.
	Then $p\in L^\infty(0, T;\mathring Q)$.
\end{Prop}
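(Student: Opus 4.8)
The plan is to recover an $L^\infty$-in-time bound on $p$ from the inf--sup (LBB) condition together with the already-known regularity $u \in L^\infty(0,T; V_{n,\sigma})$, $u' \in L^\infty(0,T; L^2(\Omega)^d)$. Recall from the proof of Theorem \ref{Thm c.2} that for a.e.\ $t$ the pair $(u(t),p(t)) \in V_{n,\sigma}\times\mathring Q$ satisfies
\begin{equation*}
	b(v, p) = (f,v) - (u',v) - a_0(u,v) - a_1(u,u,v) + \langle \sigma_\tau, v_\tau\rangle_{H^{1/2}(\Gamma_1)^d} \qquad (\forall v \in V_n),
\end{equation*}
and in particular, restricting to $v \in \mathring V$ (so the boundary term drops), $b(v,p)$ equals the right-hand side without the $\sigma_\tau$ contribution. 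Since $\mathrm{div}$ maps $\mathring V = H_0^1(\Omega)^d$ onto $\mathring Q = L_0^2(\Omega)$ with a bounded right inverse (the standard Bogovski\u{\i}/inf--sup result, \cite[Propositions I.1.1--I.1.2]{Tem77}), there is $\beta>0$ with $\sup_{v\in\mathring V}\dfrac{b(v,p)}{\|v\|_{H^1(\Omega)^d}} \ge \beta\|p\|_{L^2(\Omega)}$ for all $p \in \mathring Q$.

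First I would estimate the right-hand side: for $v \in \mathring V$,
\begin{equation*}
	|b(v,p)| \le \big( \|f(t)\|_{L^2(\Omega)^d} + \|u'(t)\|_{L^2(\Omega)^d} \big)\|v\|_{L^2(\Omega)^d} + \|a_0\|\,\|u(t)\|_{H^1(\Omega)^d}\|v\|_{H^1(\Omega)^d} + |a_1(u,u,v)|,
\end{equation*}
and bound $|a_1(u,u,v)| \le C\|u(t)\|_{H^1(\Omega)^d}^2\|v\|_{H^1(\Omega)^d}$ using \eqref{b.7}. Dividing by $\|v\|_{H^1(\Omega)^d}$ and taking the supremum over $v\in\mathring V$, the inf--sup inequality gives
\begin{equation*}
	\|p(t)\|_{L^2(\Omega)} \le \frac{1}{\beta}\Big( \|f(t)\|_{L^2(\Omega)^d} + \|u'(t)\|_{L^2(\Omega)^d} + C\|u(t)\|_{H^1(\Omega)^d} + C\|u(t)\|_{H^1(\Omega)^d}^2 \Big).
\end{equation*}
Now every term on the right is in $L^\infty(0,T)$: $f \in H^1(0,T;L^2(\Omega)^d) \hookrightarrow L^\infty(0,T;L^2(\Omega)^d)$ by (S\ref{S1}), $u' \in L^\infty(0,T;L^2(\Omega)^d)$ and $u \in L^\infty(0,T;H^1(\Omega)^d)$ by Theorem \ref{Thm c.1}. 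Hence $\mathrm{ess\,sup}_{t}\|p(t)\|_{L^2(\Omega)} < \infty$, i.e.\ $p \in L^\infty(0,T;\mathring Q)$ (with the obvious modification to $(0,T')$ when $d=3$).

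The only genuinely delicate point is measurability: one must ensure that $t \mapsto \|p(t)\|_{L^2(\Omega)}$ is measurable so that ``$L^\infty$'' is meaningful, but this is immediate since $p$ was constructed as the solution of the linear equation $b(\cdot,p(t)) = \langle F(t),\cdot\rangle$ with a measurable (indeed $L^2$-in-$t$) right-hand side, so $t\mapsto p(t)$ is already known to be, say, in $L^2(0,T;\mathring Q)$ from the proof of Theorem \ref{Thm c.2}; the pointwise bound above then upgrades this to $L^\infty$. I do not expect any serious obstacle here — this is a routine consequence of the inf--sup stability of $b$ combined with the regularity already secured for $u$; the main care is simply to note that the boundary term $\langle \sigma_\tau, v_\tau\rangle$ is irrelevant because we test only against $v \in \mathring V$.
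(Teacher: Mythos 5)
Your proposal is correct and follows essentially the same route as the paper: test the pressure equation against $v\in\mathring V$, apply the inf--sup condition for $(\mathring V,\mathring Q)$, bound the convective term via \eqref{b.7}, and use the $L^\infty$-in-time regularity of $u$, $u'$, and $f$. The only cosmetic difference is that the paper's displayed estimate retains a harmless $C\|g\|_{L^2(\Gamma_1)}$ term coming from \eqref{c.2}, whereas you correctly observe that the boundary term drops for test functions in $\mathring V$.
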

\begin{proof}
	For a.e.~$t\in (0,T)$, the well-known inf-sup condition (see \cite[I.(5.14)]{GiRa86}), together with (\ref{c.2}), (\ref{b.7}), and $|\sigma_\tau| \le g$ a.e.~on $\Gamma_1$, yields
	\begin{align*}
		\|p\|_{L^2(\Omega)} &\le \sup_{v\in\mathring V} \frac{b(v, p)}{\|v\|_{H^1(\Omega)^d}} \\
			&\le \|u'\|_{L^2(\Omega)^d} + C\|u\|_{H^1(\Omega)^d} + C\|u\|_{H^1(\Omega)^d}^2 + C\|g\|_{L^2(\Gamma_1)} + \|f\|_{L^2(\Omega)^d}.
	\end{align*}
	Since RHS is bounded uniformly in $t$, $p$ is in $L^\infty(0, T;\mathring Q)$.
\end{proof}

\subsection{Regularized problem}
To prove the solvability of Problem VI$_\sigma$, we consider a regularized problem VI$_\sigma^\epsilon$, which is shown to be equivalent to a variational equation problem, denoted by Problem VE$_\sigma^\epsilon$.

Before stating those problems in detail, for fixed $\epsilon>0$ we introduce
\begin{equation*}
	j_\epsilon(\eta) = \int_{\Gamma_1}g\rho_\epsilon(\eta)\,ds,
\end{equation*}
where $\rho_\epsilon$ is a regularization of $|\cdot|$ having the following properties:
\begin{enumerate}[\quad(a)]
	\item $\rho_\epsilon \in C^2(\mathbb R^d)$ is a nonnegative convex function.
	\item For all $z\in\mathbb R^d$, it holds that
		\begin{equation}
			\big| \rho_\epsilon(z) - |z| \big| \le \epsilon. \label{c.28}
		\end{equation}
	\item If $\alpha_\epsilon$ denotes $\nabla\rho_\epsilon$, for all $z\in\mathbb R^d$ it holds that 
		\begin{equation}
			|\alpha(z)| \le 1 \quad\text{and}\quad \alpha_\epsilon(z)\cdot z \ge 0. \label{c.18}
		\end{equation}
	\item Let $\beta_\epsilon$ denote the Hessian of $\rho_\epsilon$, namely, $\beta_{\epsilon,ij} = \frac{\partial^2\rho_\epsilon}{\partial z_i\partial z_j}$ for $i,j=1, . . . ,d$.
		Then $\beta_\epsilon$ is semi-positive definite, that is,
		\begin{equation}
			{}^ty \beta_\epsilon(z) y \ge 0 \qquad (\forall y,z\in \mathbb R^d), \label{c.19}
		\end{equation}
		where ${}^t y$ means the transpose of $y$.
		This is a consequence of the convexity of $\rho_\epsilon$.
\end{enumerate}
Such $\rho_\epsilon$ does exist; for example, let $\rho_\epsilon$ be given by $\rho_\epsilon(z) = |z| - (1 - \frac2\pi)\epsilon$ if $|z|\ge\epsilon$, $\rho_\epsilon(z) = \frac{2\epsilon}\pi (1 - \cos\frac\pi{2\epsilon}|z|)$ if $|z|\le\epsilon$.
Then some elementary computation shows that $\rho_\epsilon$ enjoys all of (a)--(d) above.
\begin{Rem}
	One could use the Moreau-Yoshida approximation of $|\cdot|$ as $\rho_\epsilon$, which is considered in \cite{S04}, but it is only in $C^1(\mathbb R^d)$, not in $C^2(\mathbb R^d)$.
\end{Rem}

Since $\rho_\epsilon$ is differentiable, the functional $j_\epsilon$ is G\^ateaux differentiable, with its derivative $Dj_\epsilon(\eta)\in (H^{1/2}(\Gamma_1)^d)'$ computed by
\begin{equation}
	\left< Dj_\epsilon(\eta), \xi \right>_{H^{1/2}(\Gamma_1)^d} = \lim_{h\to0} \frac{j_\epsilon(\eta + h\xi) - j_\epsilon(\eta)}h = \int_{\Gamma_1}g\alpha_\epsilon(\eta)\cdot\xi\,ds \label{c.13}
\end{equation}
for $\eta,\xi \in H^{1/2}(\Gamma_1)^d$.

We are ready to state the regularized problems mentioned above.
\vspace{1mm}
\begin{Prob}{VI$_\sigma^\epsilon$-SBCF}
	For a.e.~$t\in(0,T)$, find $u_\epsilon(t)\in V_{n,\sigma}$  such that $u_\epsilon'(t) \in L^2(\Omega)^d$, $u_\epsilon(0) = u_0^\epsilon$ and 
	\begin{align}
		&(u_\epsilon', v-u_\epsilon) + a_0(u_\epsilon, v-u_\epsilon) + a_1(u_\epsilon, u_\epsilon, v-u_\epsilon) + j_\epsilon(v_\tau) - j_\epsilon(u_{\epsilon\tau}) \notag \\
		\ge &(f, v-u_\epsilon) \hspace{7cm} (\forall v\in V_{n,\sigma}). \label{c.15}
	\end{align}
\end{Prob}
\begin{Prob}{VE$_\sigma^\epsilon$-SBCF}
	For a.e.~$t\in(0,T)$, find $u_\epsilon(t)\in V_{n,\sigma}$  such that $u_\epsilon'(t) \in L^2(\Omega)^d$, $u_\epsilon(0) = u_0^\epsilon$ and 
	\begin{equation}
		(u_\epsilon', v) + a_0(u_\epsilon, v) + a_1(u_\epsilon, u_\epsilon, v) + \int_{\Gamma_1}g\alpha_\epsilon(u_{\epsilon\tau})\cdot v_\tau ds = (f, v) \qquad (\forall v\in V_{n,\sigma}). \label{c.14}
	\end{equation}
\end{Prob}
\noindent Here, $u_0^\epsilon$ is a perturbation of the original initial velocity $u_0$.
The way one obtains $u_0^\epsilon$ from $u_0$ is described later.
By an elementary observation (e.g.\ \cite[Section 3.3]{DuLi72} or \cite[Lemma 3.3]{S04}), we see that:
\begin{Prop}\label{Prop c.1}
	Problems $\mathrm{VI}_\sigma^\epsilon$ and $\mathrm{VE}_\sigma^\epsilon$ are equivalent.
\end{Prop}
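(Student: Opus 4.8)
The plan is to invoke the standard correspondence between a variational inequality governed by a convex, differentiable potential and its associated Euler--Lagrange equation, with the convexity of $\rho_\epsilon$ (property (a)) and the formula (\ref{c.13}) for the G\^ateaux derivative $Dj_\epsilon$ doing all the work. Since Problems $\mathrm{VI}_\sigma^\epsilon$ and $\mathrm{VE}_\sigma^\epsilon$ share the same state space $V_{n,\sigma}$, the same regularity requirement $u_\epsilon'(t)\in L^2(\Omega)^d$, and the same initial condition $u_\epsilon(0)=u_0^\epsilon$, it suffices to show, for a.e.\ fixed $t\in(0,T)$, that (\ref{c.15}) and (\ref{c.14}) are equivalent; everything below is carried out pointwise in $t$.

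For the implication $\mathrm{VE}_\sigma^\epsilon\Rightarrow\mathrm{VI}_\sigma^\epsilon$, I would take a solution $u_\epsilon$ of (\ref{c.14}), fix an arbitrary $v\in V_{n,\sigma}$, and insert the admissible test function $v-u_\epsilon\in V_{n,\sigma}$ into (\ref{c.14}). The resulting identity contains the boundary term $\int_{\Gamma_1}g\,\alpha_\epsilon(u_{\epsilon\tau})\cdot(v_\tau-u_{\epsilon\tau})\,ds$. By the gradient inequality for the convex function $\rho_\epsilon$, namely $\rho_\epsilon(v_\tau)-\rho_\epsilon(u_{\epsilon\tau})\ge\alpha_\epsilon(u_{\epsilon\tau})\cdot(v_\tau-u_{\epsilon\tau})$ pointwise on $\Gamma_1$, multiplying by $g>0$ and integrating gives $\int_{\Gamma_1}g\,\alpha_\epsilon(u_{\epsilon\tau})\cdot(v_\tau-u_{\epsilon\tau})\,ds\le j_\epsilon(v_\tau)-j_\epsilon(u_{\epsilon\tau})$. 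Substituting this upper bound into the identity yields (\ref{c.15}) immediately.

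For the converse $\mathrm{VI}_\sigma^\epsilon\Rightarrow\mathrm{VE}_\sigma^\epsilon$, I would take a solution $u_\epsilon$ of (\ref{c.15}), fix $w\in V_{n,\sigma}$ and $h>0$, and test (\ref{c.15}) with $v=u_\epsilon+hw\in V_{n,\sigma}$, which has tangential trace $v_\tau=u_{\epsilon\tau}+hw_\tau$. All bilinear and linear terms scale linearly in $h$, so dividing by $h$ leaves $(u_\epsilon',w)+a_0(u_\epsilon,w)+a_1(u_\epsilon,u_\epsilon,w)+\bigl(j_\epsilon(u_{\epsilon\tau}+hw_\tau)-j_\epsilon(u_{\epsilon\tau})\bigr)/h\ge(f,w)$. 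Letting $h\downarrow0$ and using (\ref{c.13}) with $\eta=u_{\epsilon\tau}$, $\xi=w_\tau$, the difference quotient converges to $\int_{\Gamma_1}g\,\alpha_\epsilon(u_{\epsilon\tau})\cdot w_\tau\,ds$, so we obtain ``$\ge(f,w)$'' in (\ref{c.14}); replacing $w$ by $-w$ gives the reverse inequality, hence equality, and since $w\in V_{n,\sigma}$ was arbitrary, (\ref{c.14}) holds.

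I do not expect a genuine obstacle here; the only step demanding a little care is the passage to the limit in the one-sided difference quotient in the second implication, which is exactly what the G\^ateaux differentiability of $j_\epsilon$ recorded in (\ref{c.13}) supplies — and, since $\rho_\epsilon$ is convex, the quotient is moreover monotone in $h$, so the limit exists automatically. This is why the proposition reduces to ``an elementary observation'' as asserted.
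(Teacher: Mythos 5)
Your proof is correct and is precisely the standard convexity/G\^ateaux-differentiability argument that the paper invokes by citation (to \cite[Section 3.3]{DuLi72} and \cite[Lemma 3.3]{S04}) without writing out. Both directions are sound as you present them, including the monotonicity of the difference quotient guaranteeing existence of the limit in the second implication.
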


Now we focus on the construction of a perturbed initial velocity $u_0^\epsilon$.
Since $u_0\in H^2(\Omega)^d$ satisfies SBCF by (S\ref{S3}), it follows from the Green formula $a_0(u_0, v) = (-\nu\Delta u_0, v) + \int_{\Gamma_1}\sigma_\tau(u_0)\cdot v_\tau\,ds$, for $v\in V_{n,\sigma}$, that
\begin{align}
	&a_0(u_0, v - u_0) + \int_{\Gamma_1} g(0) |v_\tau|\, ds - \int_{\Gamma_1} g(0) |u_{0\tau}|\, ds \ge (-\nu\Delta u_0, v - u_0) \notag \\[-2mm]
	&\hspace{9cm}		(\forall v\in V_{n,\sigma}). \label{c.34}
\end{align}
Here we consider the regularized problem: find $u_0^\epsilon \in V_{n,\sigma}$ such that
\begin{align}
	&a_0(u_0^\epsilon, v - u_0^\epsilon) + \int_{\Gamma_1} g(0)\rho_\epsilon(v_\tau)\, ds - \int_{\Gamma_1} g(0)\rho_\epsilon(u_{0\tau}^\epsilon)\, ds \ge (-\nu\Delta u_0, v - u_0^\epsilon) \notag \\[-2mm]
	&\hspace{9.5cm}	(\forall v\in V_{n,\sigma}), \label{c.25}
\end{align}
which is equivalent to (cf. Proposition \ref{Prop c.1})
\begin{equation}
	a_0(u_0^\epsilon, v) + \int_{\Gamma_1} g(0)\alpha_\epsilon(u_{0\tau}^\epsilon)\cdot v_\tau\, ds = (-\nu\Delta u_0, v) \qquad (\forall v\in V_{n,\sigma}). \label{c.26}
\end{equation}
By a standard theory of elliptic variational inequalities \cite{G84}, (\ref{c.25}) admits a unique solution $u_0^\epsilon$, which is the perturbation of $u_0$ in question.
With this setting, we find:
\begin{Lem} \label{Lem c.1}
	{\rm (i)} When $\epsilon\to0$, $u_0^\epsilon\to u_0$ strongly in $H^1(\Omega)^d$.
	
	{\rm (ii)} $u_0^\epsilon \in H^2(\Omega)^d$ and
	\begin{equation}
		\|u_0^\epsilon\|_{H^2(\Omega)^d} \le C(\|\nu\Delta u_0\| + \|g(0)\|_{H^1(\Gamma_1)}). \label{c.27}
	\end{equation}
\end{Lem}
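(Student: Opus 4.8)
The plan is to prove (i) by a compactness argument based on the variational inequalities (\ref{c.25})/(\ref{c.34}), and (ii) by applying the $H^2$--$H^1$ regularity theory for the Stokes equations to (\ref{c.26}), taking care that the bound remains uniform in $\epsilon$. \emph{For part (i)}, first I would extract a bound on $u_0^\epsilon$ uniform in $\epsilon$: testing (\ref{c.26}) with $v=u_0^\epsilon$ and using $g(0)\ge 0$ together with $\alpha_\epsilon(z)\cdot z\ge 0$ from (\ref{c.18}) kills the boundary term, so $a_0(u_0^\epsilon,u_0^\epsilon)\le(-\nu\Delta u_0,u_0^\epsilon)$, and Korn's inequality (\ref{b.3}) gives $\|u_0^\epsilon\|_{H^1(\Omega)^d}\le\alpha^{-1}\|\nu\Delta u_0\|$. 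Hence, along a subsequence, $u_0^\epsilon\rightharpoonup w$ weakly in $H^1(\Omega)^d$ with $w\in V_{n,\sigma}$, and by compactness of the trace operator $H^1(\Omega)^d\to L^2(\Gamma_1)^d$ also $u_{0\tau}^\epsilon\to w_\tau$ strongly in $L^2(\Gamma_1)^d$. I would then pass to the limit in (\ref{c.25}), rewritten so that $a_0(u_0^\epsilon,u_0^\epsilon)+\int_{\Gamma_1}g(0)\rho_\epsilon(u_{0\tau}^\epsilon)\,ds$ sits on the right-hand side: the left-hand side converges (using $|\rho_\epsilon(z)-|z||\le\epsilon$ from (\ref{c.28}) and $g(0)\in L^2(\Gamma_1)$), while on the right $\liminf_\epsilon a_0(u_0^\epsilon,u_0^\epsilon)\ge a_0(w,w)$ by weak lower semicontinuity of $v\mapsto a_0(v,v)$ and $\int_{\Gamma_1}g(0)\rho_\epsilon(u_{0\tau}^\epsilon)\,ds\to\int_{\Gamma_1}g(0)|w_\tau|\,ds=j(w_\tau)$ by the strong boundary convergence. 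This shows $w$ solves the same elliptic variational inequality (\ref{c.34}) as $u_0$; uniqueness of that solution (\cite{G84}) forces $w=u_0$, and as the limit is independent of the subsequence, $u_0^\epsilon\rightharpoonup u_0$ in $H^1(\Omega)^d$. To upgrade to strong convergence I would take $v=u_0$ in (\ref{c.25}), getting $a_0(u_0^\epsilon,u_0^\epsilon)\le a_0(u_0^\epsilon,u_0)+\int_{\Gamma_1}g(0)(\rho_\epsilon(u_{0\tau})-\rho_\epsilon(u_{0\tau}^\epsilon))\,ds-(-\nu\Delta u_0,u_0-u_0^\epsilon)$; the right-hand side tends to $a_0(u_0,u_0)$, so $\limsup_\epsilon a_0(u_0^\epsilon,u_0^\epsilon)\le a_0(u_0,u_0)$, which with the lower bound yields $a_0(u_0^\epsilon-u_0,u_0^\epsilon-u_0)\to 0$, and Korn's inequality concludes (i).

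\emph{For part (ii)}, by a standard pressure-recovery argument (as in the proof of Theorem \ref{Thm c.2}) there is $p_0^\epsilon\in\mathring Q$ such that $(u_0^\epsilon,p_0^\epsilon)$ is a weak solution of the Stokes system $-\nu\Delta u_0^\epsilon+\nabla p_0^\epsilon=-\nu\Delta u_0$, $\mathrm{div}\,u_0^\epsilon=0$, with $u_0^\epsilon=0$ on $\Gamma_0$ and $u_{0n}^\epsilon=0$, $\sigma_\tau(u_0^\epsilon)=-g(0)\alpha_\epsilon(u_{0\tau}^\epsilon)$ on $\Gamma_1$. Interior regularity and regularity near $\Gamma_0$ being classical, I would concentrate on the $H^2$ estimate near $\Gamma_1$, using tangential difference quotients exactly as in the proof of the $H^2$--$H^1$ regularity under SBCF in \cite{S04}. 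After localizing and flattening, one tests (\ref{c.26}) with a divergence-corrected $-D_{-h}^{s}D_h^{s}u_0^\epsilon$, where $D_h^s$ is a difference quotient in a direction tangent to $\Gamma_1$: the $a_0$-term produces $a_0(D_h^s u_0^\epsilon,D_h^s u_0^\epsilon)\ge\alpha\|D_h^s u_0^\epsilon\|_{H^1(\Omega)^d}^2$ up to lower-order chart terms, while the boundary term, after a discrete integration by parts and the Leibniz rule, splits into $\int_{\Gamma_1}g(0)\,D_h^s[\alpha_\epsilon(u_{0\tau}^\epsilon)]\cdot D_h^s[u_{0\tau}^\epsilon]\,ds\ge 0$ -- nonnegative because $\alpha_\epsilon=\nabla\rho_\epsilon$ is monotone and $g(0)\ge 0$ -- plus a remainder $\int_{\Gamma_1}(D_h^s g(0))\,\alpha_\epsilon(u_{0\tau}^\epsilon(\cdot+h))\cdot D_h^s u_{0\tau}^\epsilon\,ds$ bounded by $C\|g(0)\|_{H^1(\Gamma_1)}\|D_h^s u_0^\epsilon\|_{H^1(\Omega)^d}$ via $|\alpha_\epsilon|\le 1$, the difference-quotient bound for $g(0)\in H^1(\Gamma_1)$, and the trace theorem. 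Moving the nonnegative term to the left and absorbing the rest (including $(-\nu\Delta u_0,-D_{-h}^{s}D_h^{s}u_0^\epsilon)$, estimated by $\|\nu\Delta u_0\|\,\|D_h^s\nabla u_0^\epsilon\|$) into $\alpha\|D_h^s u_0^\epsilon\|_{H^1(\Omega)^d}^2$, and invoking the uniform $H^1$ bound of part (i), gives $\|D_h^s u_0^\epsilon\|_{H^1(\Omega)^d}\le C(\|\nu\Delta u_0\|+\|g(0)\|_{H^1(\Gamma_1)})$ uniformly in $h$ and $\epsilon$; letting $h\to 0$ controls the tangential second derivatives near $\Gamma_1$. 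The remaining (normal) second derivatives of $u_0^\epsilon$ and the $H^1$ bound on $p_0^\epsilon$ then follow from the Stokes system together with the inf-sup/pressure estimate in the usual way, which yields (\ref{c.27}).

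\emph{The main obstacle} is keeping the constant in (\ref{c.27}) independent of $\epsilon$: since the Lipschitz constant of $\alpha_\epsilon$ blows up like $\epsilon^{-1}$, one cannot treat $g(0)\alpha_\epsilon(u_{0\tau}^\epsilon)$ as generic $H^{1/2}(\Gamma_1)$ boundary stress data and invoke a black-box Stokes estimate; the resolution is precisely to exploit the monotonicity (convexity of $\rho_\epsilon$) so that the dangerous boundary term appears with a favourable sign, as above. The remaining points -- constructing admissible divergence-free difference-quotient test functions compatible with the boundary conditions, and bookkeeping the chart-dependent lower-order terms -- are routine but require care.
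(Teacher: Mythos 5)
Your argument is correct, but both halves take a noticeably different route from the paper. For (i), the paper uses no compactness at all: it tests the regularized inequality (\ref{c.25}) with $v=u_0$ and the exact inequality (\ref{c.34}) with $v=u_0^\epsilon$, adds the two, and uses $|\rho_\epsilon(z)-|z||\le\epsilon$ to obtain directly $\alpha\|u_0^\epsilon-u_0\|_{H^1(\Omega)^d}^2\le 2\epsilon\int_{\Gamma_1}g(0)\,ds$. This is much shorter than your weak-limit/identification/energy-upgrade scheme and, unlike it, yields a quantitative rate $O(\sqrt\epsilon)$; your version is nevertheless a sound Minty-type argument (the uniqueness of the solution of (\ref{c.34}) that you invoke does hold, the VI being of second kind with a coercive symmetric bilinear form), it just buys nothing extra here. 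For (ii), the paper does not reprove anything: it observes that (\ref{c.25}) is exactly the elliptic variational inequality treated in \cite[Lemma 5.2]{S04} with data $-\nu\Delta u_0\in L^2(\Omega)^d$ and $g(0)\in H^1(\Gamma_1)$, and simply cites that lemma, noting only that the different choice of $\rho_\epsilon$ is harmless. Your difference-quotient sketch is essentially a reconstruction of Saito's proof, and you correctly isolate the one point that matters for this paper, namely that the bound must be uniform in $\epsilon$, which is achieved by the favourable sign of the monotone boundary term rather than by the ($\epsilon$-degenerate) Lipschitz constant of $\alpha_\epsilon$. The steps you label as routine (admissible solenoidal difference-quotient test functions compatible with $v_n=0$ on $\Gamma_1$, pressure recovery, and control of the normal second derivatives from the equation) are precisely what the cited lemma supplies, so there is no gap; just be aware that the paper's proof of (ii) is a one-line citation rather than an argument.
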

\begin{proof}
	(i) Taking $v = u_0$ in (\ref{c.25}) and $v = u_0^\epsilon$ in (\ref{c.34}), adding the resulting two inequalities, applying Korn's inequality, and using (\ref{c.28}), we conclude
	\begin{align*}
		\alpha \|u_0^\epsilon - u_0\|_{H^1(\Omega)^d}^2 &\le \int_{\Gamma_1} g(0)\big( |u_0^\epsilon| - \rho_\epsilon(u_0^\epsilon) \big)ds + \int_{\Gamma_1} g(0)\big( \rho_\epsilon(u_0) - |u_0| \big)ds \\
			&\le 2\epsilon \int_{\Gamma_1} g(0)\,ds \to 0 \qquad (\epsilon\to0).
	\end{align*}
	
	(ii) Since $g(0) \in H^1(\Gamma_1)$ by (S\ref{S2}), we can directly apply the regularity result \cite[Lemma 5.2]{S04} to the elliptic variational inequality (\ref{c.25}), and obtain (\ref{c.27}).
	Though our $\rho_\epsilon$ and $\alpha_\epsilon$ are different from those of \cite{S04}, it makes no difference in the proof of that lemma.
\end{proof}
\begin{Rem} \label{Rem c.10}
	(i) As a result of (i) above, for sufficiently small $\epsilon>0$ we have
	\begin{equation}
		\|u_0^\epsilon\|_{L^2(\Omega)^d} \le 2\|u_0\|_{L^2(\Omega)^d} \qquad\text{and}\qquad \|u_0^\epsilon\|_{H^1(\Omega)^d} \le 2\|u_0\|_{H^1(\Omega)^d}. \label{c.51}
	\end{equation}
	
	(ii) Concerning the regularity of the domain, \cite{S04} assumes that $\Gamma_0$ and $\Gamma_1$ are class of $C^2$ and $C^4$ respectively, which is sufficient for our theory as well.
\end{Rem}
\begin{Rem}
	In \cite{S04}, dealing with the stationary problem, the author stated that $g\in H^{1/2}(\Gamma_1)$ was enough to derive $u\in H^2(\Omega)^d$ and $p\in H^1(\Omega)$.
	However, it turned out that his proof presented there worked only for $g\in H^1(\Gamma_1)$; see the errata by the same author.
	This is why we have assumed $g(0)\in H^1(\Gamma_1)$ in (S\ref{S2}), not $g(0)\in H^{1/2}(\Gamma_1)$.
\end{Rem}

\subsection{Proof of existence} \label{Sec 3.4}
Due to Proposition \ref{Prop c.1}, we concentrate on solving Problem VE$_\sigma^\epsilon$.
In doing so, we construct approximate solutions by Galerkin's method.
Since $V_{n,\sigma}\subset H^1(\Omega)^d$ is separable, there exist members $w_1,w_2,. . .\, \in V_{n,\sigma}$, linear independent to each other, such that $\bigcup_{m=1}^\infty \mathrm{span}\{w_k\}_{k=1}^m \subset V_{n,\sigma}$ dense in $H^1(\Omega)^d$.
Here $\epsilon$ is fixed, and thus we may assume $w_1 = u_0^\epsilon$.

\vspace{2mm}
\begin{Prob}{VE$_\sigma^{\epsilon,m}$-SBCF}
	Find $c_k\in C^2([0, T])\, (k=1, . . . ,m)$ such that $u_m\in V_{n,\sigma}$ defined by $u_m = \sum_{k=1}^m c_k(t)w_k$ satisfies $u_m(0) = u_0^\epsilon$ and
	\begin{align}
		&(u_m', w_k) + a_0(u_m, w_k) + a_1(u_m, u_m, w_k) + \int_{\Gamma_1} g\alpha_\epsilon(u_{m\tau})\cdot w_{k\tau}ds = (f, w_k) \notag \\[-2mm]
		&\hspace{9cm}		(k=1, . . . , m). \label{c.17}
	\end{align}
\end{Prob}
Since $\alpha_\epsilon\in C^1(\mathbb R^d)^d$, the system of ordinal differential equations (\ref{c.17}) admits unique solutions $c_k\in C^2([0, \tilde T])\, (k=1, . . . ,m)$ for some $\tilde T\le T$.
The a priori estimate below shows $\tilde T$ can be taken as $T$, so that we write $T$ instead of $\tilde T$ from the beginning.
\begin{Prop} \label{Prop c.2}
	Let {\rm (S\ref{S1})--(S\ref{S3})} be valid and $\epsilon$ be small enough so that $(\ref{c.51})$ holds.
	
	{\rm (i)} When $d=2$, $u_m\in L^\infty(0, T; V_{n,\sigma})$ and $u_m' \in L^\infty(0, T; L^2(\Omega)^d)\cap L^2(0, T; V_{n,\sigma})$ are bounded independently of $m$ and $\epsilon$.
	
	{\rm (ii)} When $d=3$, the same conclusion holds for some smaller interval $(0, T')$, which can be taken independently of $m$ and $\epsilon$.
\end{Prop}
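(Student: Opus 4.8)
\emph{Approach.} The plan is to prove, with all constants independent of $m$ and $\epsilon$, an energy bound for $u_m$, then a bound for $u_m'$ obtained by differentiating the Galerkin identity (\ref{c.17}) in time, and finally to upgrade these to the stated $L^\infty$-in-time $H^1$ bound on $u_m$ by reading (\ref{c.17}) at a fixed $t$ as a stationary Stokes problem. First I would take $v=u_m$ in (\ref{c.17}) (multiply by $c_k$, sum): since $u_m\in V_{n,\sigma}$, Lemma \ref{Lem b.2}(i) gives $a_1(u_m,u_m,u_m)=0$, while $\int_{\Gamma_1}g\,\alpha_\epsilon(u_{m\tau})\cdot u_{m\tau}\,ds\ge0$ by (\ref{c.18}); together with Korn's inequality (\ref{b.3}) and Young's inequality this yields $\frac{d}{dt}\|u_m\|_{L^2(\Omega)^d}^2+\alpha\|u_m\|_{H^1(\Omega)^d}^2\le C\|f\|_{L^2(\Omega)^d}^2$, hence after integration and (\ref{c.51}) a uniform bound for $u_m$ in $L^\infty(0,T;L^2(\Omega)^d)\cap L^2(0,T;H^1(\Omega)^d)$.

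\emph{Estimate on $u_m'$.} Since $c_k\in C^2([0,T])$, I would differentiate (\ref{c.17}) in $t$ and test with $v=u_m'(t)\in V_{n,\sigma}$. The inertial term becomes $a_1(u_m',u_m,u_m')+a_1(u_m,u_m',u_m')$ and $a_1(u_m,u_m',u_m')=0$ by Lemma \ref{Lem b.2}(i); using $\tfrac{d}{dt}\alpha_\epsilon(u_{m\tau})=\beta_\epsilon(u_{m\tau})u_{m\tau}'$, the friction term gives $\int_{\Gamma_1}g'\alpha_\epsilon(u_{m\tau})\cdot u_{m\tau}'\,ds+\int_{\Gamma_1}g\,{}^t(u_{m\tau}')\,\beta_\epsilon(u_{m\tau})\,u_{m\tau}'\,ds$, and the second integral is $\ge0$ because $g>0$ and $\beta_\epsilon$ is semi-positive definite by (\ref{c.19}) --- this is exactly why $\rho_\epsilon$ was taken in $C^2$, so that this quadratic form can simply be discarded. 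What remains, after (\ref{b.3}), is
\[
\tfrac12\tfrac{d}{dt}\|u_m'\|_{L^2(\Omega)^d}^2+\alpha\|u_m'\|_{H^1(\Omega)^d}^2\le|a_1(u_m',u_m,u_m')|+\Big|\int_{\Gamma_1}g'\alpha_\epsilon(u_{m\tau})\cdot u_{m\tau}'\,ds\Big|+(f',u_m').
\]
Here $|\alpha_\epsilon|\le1$ by (\ref{c.18}), so the boundary term is $\le\|g'\|_{L^2(\Gamma_1)}\|u_{m\tau}'\|_{L^2(\Gamma_1)^d}\le C\|g'\|_{L^2(\Gamma_1)}\|u_m'\|_{H^1(\Omega)^d}$ by Lemma \ref{Lem b.1}(i) and the trace theorem, and $(f',u_m')\le\|f'\|_{L^2(\Omega)^d}\|u_m'\|_{L^2(\Omega)^d}$; for the inertial term (\ref{b.5}) gives $|a_1(u_m',u_m,u_m')|\le C\|u_m'\|_{L^2(\Omega)^d}\|u_m'\|_{H^1(\Omega)^d}\|u_m\|_{H^1(\Omega)^d}$ when $d=2$, and (\ref{b.6}) gives $|a_1(u_m',u_m,u_m')|\le C\|u_m'\|_{L^2(\Omega)^d}^{1/2}\|u_m'\|_{H^1(\Omega)^d}^{3/2}\|u_m\|_{H^1(\Omega)^d}$ when $d=3$. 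Young's inequality (absorbing $\tfrac\alpha2\|u_m'\|_{H^1}^2$ to the left) then produces $\frac{d}{dt}\|u_m'\|_{L^2}^2+\alpha\|u_m'\|_{H^1}^2\le\phi(t)\|u_m'\|_{L^2}^2+\psi(t)$ with $\psi=C(\|f'\|_{L^2(\Omega)^d}^2+\|g'\|_{L^2(\Gamma_1)}^2)\in L^1(0,T)$ by (S\ref{S1})--(S\ref{S2}), and $\phi=C(1+\|u_m\|_{H^1(\Omega)^d}^2)$ if $d=2$, $\phi=C(1+\|u_m\|_{H^1(\Omega)^d}^4)$ if $d=3$.

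\emph{Closing the argument.} To start Gronwall I need $\|u_m'(0)\|_{L^2(\Omega)^d}$ bounded uniformly, and this is where (S\ref{S3}) intervenes: evaluating (\ref{c.17}) at $t=0$ with $u_m(0)=u_0^\epsilon$, testing with $u_m'(0)$ and using (\ref{c.26}) to replace $a_0(u_0^\epsilon,u_m'(0))+\int_{\Gamma_1}g(0)\alpha_\epsilon(u_{0\tau}^\epsilon)\cdot u_{m\tau}'(0)\,ds$ by $(-\nu\Delta u_0,u_m'(0))$ gives $\|u_m'(0)\|_{L^2}^2=(f(0)+\nu\Delta u_0,u_m'(0))-a_1(u_0^\epsilon,u_0^\epsilon,u_m'(0))$, and since $|a_1(u_0^\epsilon,u_0^\epsilon,u_m'(0))|\le\|u_0^\epsilon\|_{L^\infty(\Omega)^d}\|u_0^\epsilon\|_{H^1(\Omega)^d}\|u_m'(0)\|_{L^2(\Omega)^d}\le C\|u_0^\epsilon\|_{H^2(\Omega)^d}\|u_0^\epsilon\|_{H^1(\Omega)^d}\|u_m'(0)\|_{L^2(\Omega)^d}$ (using $H^2(\Omega)\hookrightarrow L^\infty(\Omega)$ for $d\le3$), Lemma \ref{Lem c.1}(ii) and (\ref{c.51}) give $\|u_m'(0)\|_{L^2(\Omega)^d}\le C(f,g,u_0)$ uniformly. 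When $d=2$, $\phi\in L^1(0,T)$ uniformly by the first step, so Gronwall bounds $\|u_m'\|_{L^\infty(0,T;L^2(\Omega)^d)}$ and integration bounds $\|u_m'\|_{L^2(0,T;H^1(\Omega)^d)}$, both uniformly. When $d=3$, taking $v=u_m$ in (\ref{c.17}) at fixed $t$ and estimating the friction term by $C\|g\|_{L^2(\Gamma_1)}\|u_m\|_{H^1(\Omega)^d}$ gives $\|u_m\|_{H^1(\Omega)^d}^2\le C(1+\|u_m'\|_{L^2(\Omega)^d}^2)$ uniformly, which turns the inequality of the previous step into a cubic ODE $y'\le C(1+y^3)+\psi$ for $y=\|u_m'\|_{L^2(\Omega)^d}^2$; with $y(0)$ and $\|\psi\|_{L^1(0,T)}$ uniformly bounded this has a solution bounded on some $(0,T')$ with $T'$ independent of $m,\epsilon$, and integration again bounds $\|u_m'\|_{L^2(0,T';H^1(\Omega)^d)}$. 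In both cases the Stokes estimate $\|u_m(t)\|_{H^1(\Omega)^d}\le C(1+\|f(t)\|_{L^2(\Omega)^d}+\|u_m'(t)\|_{L^2(\Omega)^d}+\|g(t)\|_{L^2(\Gamma_1)})$, together with (S\ref{S1})--(S\ref{S2}) (so $f,g$ are continuous in time) and the bound on $\|u_m'(t)\|_{L^2(\Omega)^d}$, upgrades to $u_m\in L^\infty(0,T;V_{n,\sigma})$ (resp.\ $L^\infty(0,T';V_{n,\sigma})$).

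\emph{Main obstacle.} The delicate point is the $u_m'$-estimate: one must see that differentiating the regularized friction term in time produces the Hessian quadratic form, which is $\ge0$ by the convexity of $\rho_\epsilon$ (condition (\ref{c.19})) and can therefore be dropped, and one must use the sharp trilinear bounds (\ref{b.5})--(\ref{b.6}) so that the Gronwall coefficient $\phi$ is integrable in $t$ when $d=2$ --- yielding the global estimate --- whereas for $d=3$ only a superlinear ODE, hence a uniform but a priori finite $T'$, is obtained. Keeping $\|u_m'(0)\|$ --- and thus all constants --- independent of $m$ and $\epsilon$ relies on the compatibility assumption (S\ref{S3}) through (\ref{c.26}) and the uniform $H^2$-bound of Lemma \ref{Lem c.1}.
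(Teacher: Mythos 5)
Your proposal is correct and follows the paper's proof in all essential respects: the basic energy estimate with Lemma \ref{Lem b.2}(i) and (\ref{c.18}), the time-differentiated Galerkin identity with the Hessian term discarded via (\ref{c.19}), the sharp trilinear bounds (\ref{b.5})--(\ref{b.6}) to make the Gronwall coefficient integrable in 2D, the bound on $\|u_m'(0)\|_{L^2(\Omega)^d}$ through the compatibility condition and (\ref{c.26})--(\ref{c.27}), and the pointwise-in-time energy identity to upgrade $u_m$ to $L^\infty(0,T;V_{n,\sigma})$. The one place you genuinely diverge is the closing of the 3D case. The paper does \emph{not} fully split the critical term with Young's inequality; it keeps $\gamma\|u_m\|_{H^1}\|u_m'\|_{H^1}^2$ on the right, defines $T'$ as the exit time of the set $\{\gamma\|u_m(t)\|_{H^1}\le\alpha/6\}$ so that this term can be absorbed into $\alpha\|u_m'\|_{H^1}^2$ on $(0,T')$, runs Gronwall with the coefficient $C\|u_m\|_{H^1}$ (already in $L^1$ by the first estimate), and then shows $T'$ is bounded below via $\frac{\alpha}{12\gamma}\le\sqrt{T'}\,\|u_m'\|_{L^2(0,T';V_{n,\sigma})}$. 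You instead split completely, obtain the coefficient $C(1+\|u_m\|_{H^1}^4)$, and remove its $t$-dependence through the stationary estimate $\|u_m(t)\|_{H^1}^2\le C(1+\|u_m'(t)\|_{L^2}^2)$, arriving at a superlinear ODE inequality for $y=\|u_m'\|_{L^2}^2$ and a Bihari-type local bound. Both are valid and yield a $T'$ independent of $m$ and $\epsilon$; the paper's continuation argument avoids the fourth power and the extra elliptic step inside the Gronwall loop (using that step only afterwards, to upgrade $u_m$), while yours trades the slightly fiddly exit-time bookkeeping for a self-contained ODE comparison.
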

\begin{proof}
	Due to space limitations, we simply write $\|u\|_{L^2}$, $\|g\|_{L^2}$, $\|f\|_{L^2}$, . . . instead of $\|u\|_{L^2(\Omega)^d}$, $\|g\|_{L^2(\Gamma_1)}$, $\|f\|_{L^2(\Omega)^d}$, . . . and so on.
	
	(i) Multiplying (\ref{c.17}) by $c_k(t)$, and adding the resulting equations for $k=1, . . . , m$, we obtain
	\begin{equation*}
		(u_m', u_m) + a_0(u_m, u_m) + \int_{\Gamma_1} g\alpha_\epsilon(u_{m\tau})\cdot u_{m\tau} ds = (f, u_m),
	\end{equation*}
	where we have used Lemma \ref{Lem b.2}(i). It follows from (\ref{b.3}) and (\ref{c.18}) that
	\begin{equation*}
		\frac12\frac{d}{dt} \|u_m\|_{L^2}^2 + \alpha \|u_m\|_{H^1}^2 \le (f, u_m) \le \|f\|_{L^2} \|u_m\|_{H^1} \le \frac\alpha2 \|u_m\|_{H^1}^2 + \frac1{2\alpha} \|f\|_{L^2}^2,
	\end{equation*}
	which gives
	\begin{equation}
		\frac{d}{dt} \|u_m\|_{L^2}^2 + \alpha \|u_m\|_{H^1}^2 \le C\|f\|_{L^2}^2. \label{c.23}
	\end{equation}
	Consequently, for $0\le t \le T$,
	\begin{equation} 
		\|u_m(t)\|_{L^2}^2 + \alpha \int_0^T \|u_m\|_{H^1}^2 dt \le \|u_0^\epsilon\|_{L^2}^2 + C\int_0^T \|f\|_{L^2}^2 dt. \label{c.35}
	\end{equation}
	Since $\|u_0^\epsilon\|_{L^2(\Omega)^d} \le 2\|u_0\|_{L^2(\Omega)^d}$ by assumption, we find that $\|u_m\|_{L^\infty(0, T; L^2)}$ and $\|u_m\|_{L^2(0, T; V_{n,\sigma})}$ are bounded by $C(f,u_0)$ independently of $m$ and $\epsilon$.
	
	Next, we differentiate $(\ref{c.17})$ with respect to $t$, which is possible because $c_k(t)$'s are in $C^2([0, T])$, to deduce
	\begin{align*}
		&(u_m'', w_k) + a_0(u_m', w_k) + a_1(u_m', u_m, w_k) + a_1(u_m, u_m', w_k) \\
		&\hspace{8mm}	+ \int_{\Gamma_1} g' \alpha_\epsilon(u_{m\tau})\cdot w_{k\tau} ds + \int_{\Gamma_1} g\;^t u_{m\tau}' \beta_\epsilon w_{k\tau}\, ds = (f', w_k) \quad (k=1, . . . , m).
	\end{align*}
	Multiplying this by $c_k'(t)$, and adding the resulting equations, we obtain
	\begin{align}
		&(u_m'', u_m') + a_0(u_m', u_m') + a_1(u_m', u_m, u_m') + \int_{\Gamma_1} g' \alpha_\epsilon(u_{m\tau})\cdot u_{m\tau}' ds \notag \\
		&\hspace{4cm}	+ \int_{\Gamma_1} g\;^t u_{m\tau}' \beta_\epsilon(u_{m\tau}) u_{m\tau}'\, ds = (f', u_m'), \label{c.20}
	\end{align}
	where we have again used Lemma \ref{Lem b.2}(i). Here,
	{\allowdisplaybreaks
	\begin{align}
		&&\hspace{-3cm}a_1(u_m', u_m, u_m') &\le C\|u_m'\|_{L^2} \|u_m\|_{H^1} \|u_m'\|_{H^1} \qquad(\text{by } (\ref{b.5})) \notag \\
			&&\hspace{-3cm}&\le \frac\alpha6 \|u_m'\|_{H^1}^2 + C \|u_m\|_{H^1}^2 \|u_m'\|_{L^2}^2, \label{c.29} \\[1mm]
		&&\hspace{-3cm}\left| \int_{\Gamma_1} g' \alpha_\epsilon(u_{m\tau})\cdot u_{m\tau}' ds \right| &\le \|g'\|_{L^2} \|u_{m\tau}'\|_{L^2(\Gamma_1)^d} \qquad(\text{by } (\ref{c.18})) \notag \\
			&&\hspace{-3cm}&\le C\|g'\|_{L^2} \|u_m'\|_{H^1} \qquad(\text{by Lemma \ref{Lem b.1}(i)}) \notag \\
			&&\hspace{-3cm}&\le \frac\alpha6 \|u_m'\|_{H^1}^2 + C\|g'\|_{L^2}^2, \notag \\[1mm]
		&&\hspace{-3cm}\int_{\Gamma_1} g\;^t u_{m\tau}' \beta_\epsilon(u_{m\tau}) u_{m\tau}'\, ds &\ge 0, \qquad(\text{by $g>0$ and (\ref{c.19})}) \notag \\[1mm]
		&&\hspace{-3cm}|(f', u_m')| &\le \|f'\|_{L^2} \|u_m'\|_{H^1} \le \frac\alpha6 \|u_m'\|_{H^1}^2 + C\|f'\|_{L^2}^2. \notag
	\end{align}
	}
	Collecting these estimates, it follows from (\ref{c.20}) that for $0\le t\le T$
	\begin{equation}
		\frac{d}{dt} \|u_m'\|_{L^2}^2 + \alpha \|u_m'\|_{H^1}^2 \le C( \|f'\|_{L^2}^2 + \|g'\|_{L^2}^2 ) + C \|u_m\|_{H^1}^2 \|u_m'\|_{L^2}^2. \label{c.22}
	\end{equation}
	If the second term of LHS is neglected, Gronwall's inequality leads to
	\begin{equation}
		\|u_m'(t)\|_{L^2}^2 \le \left( \|u_m'(0)\|_{L^2}^2 + C\int_0^T ( \|f'\|_{L^2}^2 + \|g'\|_{L^2}^2 ) dt \right) e^{C\int_0^T \|u_m\|_{H^1}^2 dt}. \label{c.21}
	\end{equation}
	Provided that $\|u_m'(0)\|_{L^2}^2$ is bounded independently of $m$ and $\epsilon$, estimate (\ref{c.21}) gives the boundedness of $\|u_m'\|_{L^\infty(0, T; L^2)}$ because we know that of $\|u_m\|_{L^2(0, T; V_{n,\sigma})}$ due to (\ref{c.35}).
	Then, by (\ref{c.23}) and (\ref{c.35}) we have
	\begin{equation*}
		\alpha\|u_m(t)\|_{H^1}^2 \le C\|f\|_{L^2}^2 - \|u_m'\|_{L^2}\|u_m\|_{L^2} \le C(f, g, u_0),
	\end{equation*}
	which implies $\|u_m\|_{L^\infty(0, T; V_{n,\sigma})}$ is bounded.
	Finally, integrating (\ref{c.22}), we see that $\|u_m'\|_{L^2(0, T; V_{n,\sigma})}$ is also bounded.
	
	To show the boundedness of $\|u_m'(0)\|_{L^2}^2$, we multiply (\ref{c.17}) by $c_k'(t)$, add the resulting equations, and make $t=0$, arriving at
	\begin{align}
		&\|u_m'(0)\|_{L^2}^2 + a_0( u_0^\epsilon, u_m'(0) ) + a_1( u_0^\epsilon, u_0^\epsilon, u_m'(0) ) + \int_{\Gamma_1} g(0)\alpha_\epsilon(u_{0\tau}^\epsilon)\cdot u_{m\tau}'(0)ds \notag \\
		&	= (f(0), u_m'(0)). \label{c.24}
	\end{align}
	From the construction of $u_0^\epsilon$, especially (\ref{c.26}), we have
	\begin{align}
		\left| a_0(u_0^\epsilon, u_m'(0)) + \int_{\Gamma_1} g(0)\alpha_\epsilon(u_{0\tau}^\epsilon)\cdot u_{m\tau}'(0)ds \right|  &=  |(-\nu\Delta u_0, u_m'(0))| \notag \\
			&\le  C\|u_0\|_{H^2} \|u_m'(0)\|_{L^2}. \label{c.60}
	\end{align}
	Furthermore, by Schwarz's inequality, Sobolev's inequality and (\ref{c.27}),
	\begin{align*}
		 |a_1(u_0^\epsilon, u_0^\epsilon, u_m'(0))| &\le C\|u_0^\epsilon\|_{L^\infty} \|u_0^\epsilon\|_{H^1} \|u_m'(0)\|_{L^2} \le C \|u_0^\epsilon\|_{H^2}^2\|u_m'(0)\|_{L^2} \\
		 	&\le C (\|u_0\|_{H^2} + \|g(0)\|_{H^1} )^2 \|u_m'(0)\|_{L^2}.
	\end{align*}
	Combining these estimates with (\ref{c.24}), we obtain
	\begin{equation*}
		\|u_m'(0)\|_{L^2} \le \|f(0)\|_{L^2} + C\|u_0\|_{H^2} + C( \|u_0\|_{H^2} + \|g(0)\|_{H^1} )^2,
	\end{equation*}
	which proves the boundedness of $\|u_m'(0)\|_{L^2}^2$.
	This completes the proof of (i).

	(ii) The discussion before (\ref{c.29}) and the observation for $\|u_m'(0)\|_{L^2}$ are the same as (i).
	What changes from the case $d=2$ is that when $d=3$, instead of (\ref{c.29}), we only have (by (\ref{b.6}) and Young's inequality)
	\begin{align*}
		|a_1(u_m', u_m, u_m')| &\le C \|u_m'\|_{L^2}^{1/2} \|u_m\|_{H^1} \|u_m'\|_{H^1}^{3/2} \\
			&\le \gamma\|u_m\|_{H^1} \|u_m'\|_{H^1}^2  \!+\!  C\|u_m\|_{H^1} \|u_m'\|_{L^2}^2,
	\end{align*}
	for a constant $\gamma>0$ which can be arbitrarily small.
	We choose $\gamma$ satisfying $\gamma\|u_0\|_{H^1} \le \frac\alpha{24}$, and from (\ref{c.51}) we obtain $\gamma\|u_0^\epsilon\|_{H^1} \le \frac\alpha{12}$. 
	Let $T'>0$, which may depend on $m,\epsilon$ at this stage, be the maximum value of $t$ such that $\gamma\|u_m(t)\|_{H^1} \le \frac\alpha6$.
	If $\gamma\|u_m(t)\|_{H^1} < \frac\alpha6$ for all $0\le t\le T$, we set $T'=T$.
	Since $\gamma\|u_m(0)\|_{H^1} < \frac\alpha6$ and $u_m(t)$ is continuous with respect to $t$, such $T'$ does exist, and furthermore if $T'<T$ then $\gamma\|u_m(t)\|_{H^1} = \frac\alpha6$.
	
	Therefore, in place of (\ref{c.22}) we obtain
	\begin{equation*}
		\frac{d}{dt} \|u_m'\|_{L^2}^2 + \alpha \|u_m'\|_{H^1}^2 \le C( \|f'\|_{L^2}^2 + \|g'\|_{L^2}^2 ) + C \|u_m\|_{H^1} \|u_m'\|_{L^2}^2 \quad (0\le t\le T'),
	\end{equation*}
	which leads to the boundedness of $\|u_m'\|_{L^2(0, T'; V_{n,\sigma})}$ and $\|u_m'\|_{L^\infty(0, T'; L^2)}$, together with $\|u_m\|_{L^\infty(0, T'; V_{n,\sigma})}$.
	
	Finally, let us prove that $T'$ is bounded from below independently of $m$ and $\epsilon$. In fact, if $T'<T$ then we see that
	\begin{align*}
		\frac\alpha{12\gamma} &\le \|u_m(T')\|_{H^1} - \|u_m(0)\|_{H^1} \le \|u_m(T') - u_m(0)\|_{H^1} = \left\| \int_0^{T'} u_m'(t)\,dt \right\|_{H^1} \\
			&\le \int_0^{T'} \|u_m'(t)\|_{H^1} dt \le \sqrt{T'} \|u_m'\|_{L^2(0, T'; V_{n,\sigma})}.
	\end{align*}
	Since we already know $\|u_m'\|_{L^2(0, T'; V_{n,\sigma})}$ is bounded, we obtain the lower bound for $T'$.
	This completes the proof of Proposition \ref{Prop c.2}.
\end{proof}
\begin{Rem}
	(i) A naive computation gives, by (\ref{c.18}),
	\begin{equation*}
		\left| \int_{\Gamma_1} g(0)\alpha_\epsilon(u_{0\tau}^\epsilon)\cdot u_{m\tau}'(0)\, ds \right| \le \|g(0)\|_{L^2(\Gamma_1)} \|u_{m\tau}'(0)\|_{L^2(\Gamma_1)^d},
	\end{equation*}
	but $\|u_{m\tau}'(0)\|_{L^2(\Gamma_1)^d}$ cannot be bounded by $\|u_m'(0)\|_{L^2(\Omega)^d}$ in general.
	Therefore, the perturbation of $u_0$, which is based on the compatibility condition in (S\ref{S3}), is essential in deriving (\ref{c.60}).
	
	(ii) If $d=3$ and $f$, $g$, $u_0$ are sufficiently small, we can prove $\gamma\|u_m(t)\|_{H^1(\Omega)^d}\\ \le \frac\alpha6$ for all $0\le t\le T$, and consequently the existence of a global solution.
\end{Rem}

As a final step for our proof of the existence, we discuss passing to the limits $m\to\infty$ and $\epsilon\to0$.
The proof below is valid for both $d=2,3$, except that when $d=3$ we have to replace $T$ with $T'$ given in Proposition \ref{Prop c.2}.
\begin{Prop}\label{Prop c.3}
	{\rm (i)} Under the assumptions of Proposition $\ref{Prop c.2}$, there exists a solution $u_\epsilon$ of Problem $\mathrm{VI}_\sigma^\epsilon$ such that all of
	$\|u_\epsilon\|_{L^\infty(0, T; V_{n,\sigma})}$, $\|u_\epsilon'\|_{L^2(0, T; V_{n,\sigma})}$, and $\|u_\epsilon'\|_{L^\infty(0, T; L^2(\Omega)^d)}$ are bounded independently of $\epsilon$.
	
	{\rm (ii)} There exists a strong solution of Problem $\mathrm{VI}_\sigma$.
\end{Prop}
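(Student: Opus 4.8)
The plan is to pass to the limit twice: first $m\to\infty$, producing a solution $u_\epsilon$ of the regularized problem $\mathrm{VE}_\sigma^\epsilon$, and then $\epsilon\to0$, producing a strong solution of $\mathrm{VI}_\sigma$. Both limits rest on the uniform estimates already available (Proposition \ref{Prop c.2} for the first, part (i) for the second) together with an Aubin--Lions type compactness argument to handle the convective term, the boundary integral, and the friction functional. Throughout, when $d=3$ one replaces $T$ by the $T'$ of Proposition \ref{Prop c.2}.

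\textbf{Step 1 ($m\to\infty$).} Fix $\epsilon$ small enough that (\ref{c.51}) holds. By Proposition \ref{Prop c.2}, $\{u_m\}$ is bounded in $L^\infty(0,T;V_{n,\sigma})$ and $\{u_m'\}$ in $L^\infty(0,T;L^2(\Omega)^d)\cap L^2(0,T;V_{n,\sigma})$, uniformly in $m$ and $\epsilon$. Extracting a subsequence, $u_m\rightharpoonup u_\epsilon$ weakly-$*$ in $L^\infty(0,T;V_{n,\sigma})$ and $u_m'\rightharpoonup u_\epsilon'$ weakly-$*$ in $L^\infty(0,T;L^2(\Omega)^d)$, weakly in $L^2(0,T;V_{n,\sigma})$. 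Since $H^1(\Omega)^d$ is compactly embedded in $L^2(\Omega)^d$, the Aubin--Lions--Simon lemma gives $u_m\to u_\epsilon$ strongly in $C([0,T];L^2(\Omega)^d)$ and in $L^2(0,T;H^s(\Omega)^d)$ for every $s<1$; taking $s\in[3/4,1)$ and using $H^s(\Omega)\subset L^4(\Omega)$ and the continuity of the trace $H^s(\Omega)\to L^2(\Gamma_1)$, we also get $u_m\to u_\epsilon$ in $L^2(0,T;L^4(\Omega)^d)$ and $u_{m\tau}\to u_{\epsilon\tau}$ in $L^2(0,T;L^2(\Gamma_1)^d)$, hence a.e.\ on $\Gamma_1\times(0,T)$ along a further subsequence. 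These convergences let us pass to the limit in the Galerkin identity (\ref{c.17}) (after the usual multiplication by a time cutoff and integration in $t$): the trilinear term is treated via $a_1(u_m,u_m,w_k)=-a_1(u_m,w_k,u_m)$ (Lemma \ref{Lem b.2}(i)) and the strong $L^2(0,T;L^4)$ convergence, while $g\alpha_\epsilon(u_{m\tau})\to g\alpha_\epsilon(u_{\epsilon\tau})$ in $L^2(0,T;L^2(\Gamma_1)^d)$ by dominated convergence, since $|\alpha_\epsilon|\le1$ (\ref{c.18}) and $g\in L^2(\Gamma_1\times(0,T))$. Thus $u_\epsilon$ satisfies (\ref{c.14}) for each $v=w_k$, hence, by linearity and the density of $\bigcup_m\mathrm{span}\{w_k\}$ in $V_{n,\sigma}$, for every $v\in V_{n,\sigma}$; and $u_\epsilon(0)=u_0^\epsilon$ because $u_m(0)=u_0^\epsilon$ and $u_m\to u_\epsilon$ in $C([0,T];L^2(\Omega)^d)$. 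The $\epsilon$-uniform bounds for $u_\epsilon$ follow by weak lower semicontinuity of the norms. Finally, Proposition \ref{Prop c.1} turns $u_\epsilon$ into a solution of $\mathrm{VI}_\sigma^\epsilon$, proving (i).

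\textbf{Step 2 ($\epsilon\to0$).} Using the bounds from (i), extract $u_\epsilon\rightharpoonup u$ weakly-$*$ in $L^\infty(0,T;V_{n,\sigma})$, $u_\epsilon'\rightharpoonup u'$ weakly-$*$ in $L^\infty(0,T;L^2(\Omega)^d)$ and weakly in $L^2(0,T;V_{n,\sigma})$, with $u_\epsilon\to u$ in $C([0,T];L^2(\Omega)^d)\cap L^2(0,T;L^4(\Omega)^d)$ and $u_{\epsilon\tau}\to u_\tau$ in $L^2(0,T;L^2(\Gamma_1)^d)$ and a.e.\ on $\Gamma_1\times(0,T)$. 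Then $u$ lies in the strong-solution class, and $u(0)=u_0$ since $u_\epsilon(0)=u_0^\epsilon\to u_0$ in $L^2(\Omega)^d$ by Lemma \ref{Lem c.1}(i). To pass to the limit in (\ref{c.15}), which holds only for a.e.\ $t$ with an $\epsilon$-dependent exceptional set, fix $v\in V_{n,\sigma}$ and integrate (\ref{c.15}) over an arbitrary $(s,t)\subset(0,T)$. Then $\int_s^t(u_\epsilon',v)\,d\tau=(u_\epsilon(t)-u_\epsilon(s),v)\to\int_s^t(u',v)\,d\tau$, and $\int_s^t(u_\epsilon',u_\epsilon)\,d\tau=\tfrac12\|u_\epsilon(t)\|_{L^2}^2-\tfrac12\|u_\epsilon(s)\|_{L^2}^2\to\tfrac12\|u(t)\|_{L^2}^2-\tfrac12\|u(s)\|_{L^2}^2=\int_s^t(u',u)\,d\tau$ by the strong $C([0,T];L^2)$ convergence; the $a_0$, $a_1$ and $(f,\cdot)$ terms converge as in Step 1, and $j_\epsilon(v_\tau)\to j(v_\tau)$ pointwise in $t$ by (\ref{c.28}). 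For the last term, (\ref{c.28}) gives $\int_s^t j_\epsilon(u_{\epsilon\tau})\,d\tau\ge\int_s^t j(u_{\epsilon\tau})\,d\tau-\epsilon(t-s)\|g\|_{L^1(\Gamma_1)}$, and Fatou's lemma with the a.e.\ convergence of $u_{\epsilon\tau}$ yields $\liminf_{\epsilon\to0}\int_s^t j(u_{\epsilon\tau})\,d\tau\ge\int_s^t j(u_\tau)\,d\tau$. Hence in the limit
\begin{equation*}
	\int_s^t\big[(u',v-u)+a_0(u,v-u)+a_1(u,u,v-u)+j(v_\tau)-j(u_\tau)-(f,v-u)\big]\,d\tau\ge0
\end{equation*}
for all $0\le s<t\le T$ and $v\in V_{n,\sigma}$. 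Applying this with $v$ ranging over a countable dense subset of $V_{n,\sigma}$ and letting $s\uparrow t$ for $t$ a Lebesgue point of the relevant $L^1(0,T)$ functions, then extending to all $v$ by continuity of the integrand in $v$, recovers (\ref{c.4}) for a.e.\ $t$. Thus $u$ solves Problem $\mathrm{VI}_\sigma$, proving (ii).

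\textbf{Main obstacle.} The argument is routine in outline; the delicate points are (a) extracting enough strong convergence — in particular compactness of the boundary traces — to pass to the limit in the convective term, the boundary integral $\int_{\Gamma_1}g\alpha_\epsilon(\cdot)$, and the friction functional $j_\epsilon$; (b) the one-sided term $-\int_s^t j_\epsilon(u_{\epsilon\tau})\,d\tau$, for which the approximation bound (\ref{c.28}) together with Fatou's lemma is exactly what is needed, while the identification $u(0)=u_0$ relies on the strong convergence $u_0^\epsilon\to u_0$ of Lemma \ref{Lem c.1}; and (c) recovering the pointwise-in-time inequality (\ref{c.4}) from its time-integrated form by the Lebesgue-point and density argument.
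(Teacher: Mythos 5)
Your argument is correct and follows essentially the same route as the paper: a double passage to the limit ($m\to\infty$, then $\epsilon\to0$) based on the uniform bounds of Proposition \ref{Prop c.2}, compactness in $L^2(0,T;L^4(\Omega)^d)$ and of the boundary traces to handle $a_1$ and the $\alpha_\epsilon$-term, the estimate (\ref{c.28}) together with a one-sided argument for $j_\epsilon(u_{\epsilon\tau})$, and a localization step to recover (\ref{c.4}) pointwise in $t$ (the paper uses the Duvaut--Lions device with time-dependent test functions $\tilde v\in L^2(0,T;V_{n,\sigma})$, whereas you integrate over subintervals $(s,t)$ and use Lebesgue points; both are standard and equivalent here). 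The only imprecision is in Step 2, where you assert that the $a_0$ term ``converges as in Step 1'': the quadratic piece $\int_s^t a_0(u_\epsilon,u_\epsilon)\,d\tau$ does not converge under mere weak convergence, but since it enters the inequality with a minus sign, the weak lower semicontinuity $\int_s^t a_0(u,u)\,d\tau\le\varliminf_{\epsilon\to0}\int_s^t a_0(u_\epsilon,u_\epsilon)\,d\tau$ (which the paper invokes explicitly) is exactly what is needed, so the conclusion stands after this one-line correction.
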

\begin{proof}
	(i) As a consequence of Proposition \ref{Prop c.2}, there exists some $u_\epsilon$ and a subsequence of $\{u_m\}_{m=1}^\infty$, denoted again by $\{u_m\}_{m=1}^\infty$, such that
	$u_\epsilon \in L^\infty(0, T; V_{n,\sigma})$, $u_\epsilon' \in L^2(0, T; V_{n,\sigma}) \cap L^\infty(0, T; L^2(\Omega)^d)$, and when $m\to\infty$
	\begin{align*}
		&u_m \rightharpoonup u_\epsilon &&\text{weakly-$*$ in } L^\infty(0, T; V_{n,\sigma}), \\
		&u_m' \rightharpoonup u_\epsilon' &&\text{weakly in } L^2(0,T; V_{n,\sigma}) \text{ and weakly-$*$ in } L^\infty(0, T; L^2(\Omega)^d).
	\end{align*}
	We also find that all of $\|u_\epsilon\|_{L^\infty(0, T; V_{n,\sigma})}$, $\|u_\epsilon'\|_{L^2(0, T; V_{n,\sigma})}$, and $\|u_\epsilon'\|_{L^\infty(0, T; L^2(\Omega)^d)}$ are bounded independently of $\epsilon$.

	Let us prove $u_\epsilon$ solves Problem VI$_\sigma^\epsilon$. By Proposition \ref{Prop c.1}, it suffices to show that $u_\epsilon$ is a solution of Problem VE$_\sigma^\epsilon$.
	Multiplying (\ref{c.17}) by an arbitrary $\phi\in C_0^\infty(0,T)$ and integrating over $(0, T)$, we obtain 
	\begin{align}
		&\int_0^T \phi(t) \Big\{ (u_m', w_k) + a_0(u_m, w_k) + a_1(u_m, u_m, w_k) + \int_{\Gamma_1} g\alpha_\epsilon(u_{m\tau})\cdot w_{k\tau}ds \notag \\
		&\hspace{5cm}	- (f, w_k) \Big\}dt = 0 \qquad (k = 1, . . . , m). \label{c.30}
	\end{align}
	It follows from \cite[Theorem III.2.1]{Tem77} that the embedding
	\begin{equation*}
		\{ v \,\big|\, v\in L^2(0,T; H^1(\Omega)^d),\, v' \in L^2(\Omega\times (0,T))^d \} \hookrightarrow L^2(0,T; L^4(\Omega)^d)
	\end{equation*}
	is compact, so that $u_m\to u_\epsilon$ strongly in $L^2(0,T; L^4(\Omega)^d)$.
	Moreover, since the trace operator $H^1(\Omega\times (0,T))\to L^2(\Gamma_1\times(0, T))$ is compact, $u_{m\tau}\to u_{\epsilon\tau}$ strongly in $L^2(\Gamma_1\times(0, T))^d$.
	In particular, $u_{m\tau}\to u_{\epsilon\tau}$ a.e.~on $\Gamma_1\times (0, T)$, and thus the continuity of $\alpha_\epsilon(z)$ yields $\alpha_\epsilon(u_{m\tau}) \to \alpha_\epsilon(u_{\epsilon\tau})$  a.e.~on $\Gamma_1\times (0, T)$.
	Making $m\to\infty$ in (\ref{c.30}), together with Lebesgue's convergence theorem, we see that
	\begin{align*}
		&\int_0^T \phi(t) \Big\{ (u_\epsilon', w_k) + a_0(u_\epsilon, w_k) + a_1(u_\epsilon, u_\epsilon, w_k) + \int_{\Gamma_1} g\alpha_\epsilon(u_{\epsilon\tau})\cdot w_{k\tau} ds \\
		&\hspace{7cm}	- (f, w_k) \Big\}dt = 0 \qquad (k=1,2,. . .).
	\end{align*}
	Since $\overline{ \bigcup_{m=1}^\infty \mathrm{span}\{w_k\}_{k=1}^m } = V_{n,\sigma}$, the above equation is valid for all test functions $v\in V_{n,\sigma}$.
	Hence (\ref{c.14}) holds for a.e. $t\in (0, T)$, which implies that $u_\epsilon$ is a solution of Problem VE$_\sigma^\epsilon$.
	
	(ii) As a result of (i), there exists some $u$ and a sequence $\epsilon_l\to0$ $(l\to\infty)$, to which we drop the subscript $l$ for simplicity, such that
	$u \in L^\infty(0, T; V_{n,\sigma})$, $u' \in L^2(0, T; V_{n,\sigma}) \cap L^\infty(0, T; L^2(\Omega)^d)$, and when $\epsilon\to0$
	\begin{align*}
		&u_\epsilon \rightharpoonup u &&\text{weakly-$*$ in } L^\infty(0, T; V_{n,\sigma}), \\
		&u_\epsilon' \rightharpoonup u' &&\text{weakly in } L^2(0,T; V_{n,\sigma}) \text{ and weakly-$*$ in } L^\infty(0, T; L^2(\Omega)^d).
	\end{align*}
	As before, one sees that $u_\epsilon\to u$ strongly in $L^2(0,T; L^4(\Omega)^d)$ and $u_{\epsilon\tau}\to u_\tau$ strongly in $L^2(\Gamma_1\times(0, T))$.
	In addition, $u_\epsilon \rightharpoonup u$ weakly in $L^2(0, T; V_{n,\sigma})$, and thus it follows that $\int_0^T a_0(u, u)\,dt \le  \varliminf_{\epsilon\to0} \int_0^T a_0(u_\epsilon, u_\epsilon)\,dt$.
	
	Following the technique of \cite[p.56]{DuLi72}, we let $\tilde v(t)\in L^2(0, T; V_{n,\sigma})$ be arbitrary.
	For a.e.~$t\in (0, T)$, we take $v=\tilde v(t)$ in (\ref{c.15}) and integrate the resulting equation over $(0, T)$ to deduce
	\begin{align}
		&\int_0^T \Big\{ (u_\epsilon', \tilde v - u_\epsilon) + a_0(u_\epsilon, \tilde v - u_\epsilon) + a_1(u_\epsilon, u_\epsilon, \tilde v - u_\epsilon) \notag \\[-2mm]
		&\hspace{5cm}	+ j_\epsilon(\tilde v_\tau) - j_\epsilon(u_{\epsilon\tau}) - (f, \tilde v - u_\epsilon) \Big\}dt \ge 0. \label{c.31}
	\end{align}
	In view of (\ref{c.28}), together with triangle inequality and Lipschitz continuity of $j$, we have $\int_0^T j_\epsilon(\tilde v_\tau)\,dt \to \int_0^T j(\tilde v_\tau)\,dt$ and $\int_0^T j_\epsilon(u_{\epsilon\tau})\,dt \to \int_0^T j(u_\tau)\,dt$ when $\epsilon\to0$.
	Therefore, taking the lower limit $\varliminf_{\epsilon\to0}$ in (\ref{c.31}) gives
	\begin{equation*}
		\int_0^T \Big\{ (u', \tilde v - u) + a_0(u, \tilde v - u) + a_1(u, u, \tilde v - u) + j(\tilde v_\tau) - j(u_\tau) - (f, \tilde v - u) \Big\}dt \ge 0.
	\end{equation*}
	A technique using the Lebesgue differentiation theorem allows us to conclude that $u$ satisfies (\ref{c.4}) at a.e.~$t=t_0$ (for more detail, see \cite[p.57]{DuLi72}).
	
	Concerning the initial condition, since the trace operator $H^1(\Omega\times (0, T)) \to L^2(\Omega\times \{0\})$ is continuous, Lemma \ref{Lem c.1}(i) leads to $u(0) = \lim_{\epsilon\to0}u_\epsilon(0) = \lim_{\epsilon\to0}u_0^\epsilon = u_0$.
	Hence $u$ is a strong solution of Problem VI$_\sigma$.
\end{proof}

Propositions \ref{Prop c.4} and \ref{Prop c.3}(ii) complete the proof of Theorem \ref{Thm c.1}.

\section{Navier-Stokes Problem with LBCF}
\subsection{Weak formulations}
Throughout this section, we assume $f\in L^2(\Omega\times (0,T))$, $u_0\in V_{\tau,\sigma}$, and $g\in L^2(\Gamma_1\times(0,T))$.
Further regularity assumptions on these data will be given before Theorem \ref{Thm d.1}.
In addition, the barrier term $j_n$ is simply written as $j$.
A primal weak formulation of (\ref{a.3})--(\ref{a.5}) with (\ref{a.2}) is as follows:
\vspace{2mm}
\begin{Prob}{PDE-LBCF}
	For a.e.~$t\in(0,T)$, find $(u(t), p(t))\in V_\tau\times Q$ such that $u'(t) \in L^2(\Omega)^d$, $u(0) = u_0$, $\sigma_n$ is well-defined in the sense of Definition \ref{Def b.1}, $|\sigma_n| \le g$ a.e.~on $\Gamma_1$, and $\sigma_n u_n + g|u_n| = 0$ a.e.~on $\Gamma_1$.
\end{Prob}
\begin{Rem}
	More precisely, ``$|\sigma_n| \le g$" implies that $\sigma_n \in (H^{1/2}(\Gamma_1))'$ actually belongs to $L^\infty_{1/g}(\Gamma_1)$ with $\|\sigma_n\|_{L^\infty_{1/g}(\Gamma_1)} \le 1$.
	In particular, $\sigma_n \in L^2(\Gamma_1)$.
\end{Rem}
Throughout this section, we refer to Problem PDE-LBCF just as Problem PDE.
Similar abbreviation will be made for other problems.
Next, as in SBCF, we propose:
\begin{Prob}{VI-LBCF}
	For a.e.~$t\in(0,T)$, find $(u(t), p(t))\in V_\tau\times Q$, such that $u'(t) \in L^2(\Omega)^d$, $u(0) = u_0$ and
	\begin{numcases}{\hspace{-1cm}}
		(u', v - u) + a_0(u, v - u) + a_1(u, u, v - u) + b(v - u, p) \notag \\
		\hspace{5cm}	+ j(v_n) - j(u_n) \ge (f, v - u) &$(\forall v\in V_\tau)$, \label{d.3} \\
		b(u,q) = 0 &$(\forall q\in Q)$.
	\end{numcases}
\end{Prob}
\begin{Prob}{VI$_\sigma$-LBCF}
	For a.e.~$t\in(0,T)$, find $u(t)\in V_{\tau,\sigma}$  such that $u'(t) \in L^2(\Omega)^d$, $u(0) = u_0$ and 
	\begin{equation}
		(u', v-u) + a_0(u, v-u) + a_1(u, u, v-u) + j(v_n) - j(u_n) \ge (f, v-u) \quad (\forall v\in V_{\tau,\sigma}). \label{d.5}
	\end{equation}
\end{Prob}
Unlike the case of SBCF, Problem VI$_\sigma$ is not exactly equivalent to Problem PDE, as is shown in the following theorem.
\begin{Thm}
	{\rm (i)} If $(u,p)$ solves Problem $\mathrm{PDE}$, then $u$ solves Problem $\mathrm{VI}_\sigma$.
	
	{\rm (ii)} If $u$ solves Problem $\mathrm{VI}_\sigma$, then there exists at least one $p$ such that $(u,p)$ solves Problem $\mathrm{PDE}$.
	If another $p^*$ satisfies the same condition, then for a.e.~$t\in(0, T)$ there exists a unique $\delta(t)\in\mathbb R$ such that
	\begin{equation}
		p(t) = p^*(t) + \delta(t) \quad\text{and}\quad \sigma_n(u(t), p(t)) = \sigma_n(u(t), p^*(t)) - \delta(t). \label{d.9}
	\end{equation}
	
	{\rm (iii)} In {\rm (ii)}, if we assume furthermore $u_n(t)\neq0$, then $\delta(t) = 0$. Namely, the associated pressure is uniquely determined.
\end{Thm}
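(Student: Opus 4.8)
The plan is to follow the proof of Theorem~\ref{Thm c.2}; the two new features to watch are that the pressure now ranges over $Q=L^2(\Omega)$ rather than $\mathring Q$, and that by Lemma~\ref{Lem b.10}(ii) the normal traces $v_n$ of $v\in V_{\tau,\sigma}$ fill out only the mean-zero subspace $H^{1/2}(\Gamma_1)\cap L^2_0(\Gamma_1)$, because $\int_{\Gamma_1}v_n\,ds=\int_\Omega\mathrm{div}\,v\,dx=0$. Part~(i) is the easy direction: if $(u,p)$ solves Problem~PDE then $u\in V_{\tau,\sigma}$, and Definition~\ref{Def b.1} gives
\[
 (u',v)+a_0(u,v)+a_1(u,u,v)+b(v,p)-(\sigma_n,v_n)_{L^2(\Gamma_1)}=(f,v)\qquad(\forall v\in V_\tau).
\]
Taking $v-u$ with $v\in V_{\tau,\sigma}$ kills $b(v-u,p)$, and using $\sigma_n u_n+g|u_n|=0$ I would rewrite the left-hand side of (\ref{d.5}) as $\int_{\Gamma_1}(\sigma_n v_n+g|v_n|)\,ds\ge 0$ (by $|\sigma_n|\le g$), so $u$ solves Problem~VI$_\sigma$.

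For the existence of $p$ in part~(ii): given $u$ solving Problem~VI$_\sigma$, I would first test (\ref{d.5}) with $u\pm w$, $w\in\mathring V_\sigma$ (the $j$-terms cancel since $w_n=0$), obtaining (\ref{b.50}) on $\mathring V_\sigma$, and invoke the de~Rham theory (\cite[Propositions~I.1.1 and~I.1.2]{Tem77}) to get $p\in Q$ satisfying (\ref{b.50}); then $\sigma_n\in(H^{1/2}(\Gamma_1))'$ is well defined. Substituting back and using Lemma~\ref{Lem b.10}(ii) with $v_n=u_n+\eta$ (admissible since $u_n$ is already mean-zero) yields
\[
 -\langle\sigma_n,\eta\rangle_{H^{1/2}(\Gamma_1)}\le\int_{\Gamma_1}g\bigl(|u_n+\eta|-|u_n|\bigr)\,ds\qquad\bigl(\forall\,\eta\in H^{1/2}(\Gamma_1)\cap L^2_0(\Gamma_1)\bigr),
\]
hence $|\langle\sigma_n,\eta\rangle|\le\|\eta\|_{L^1_g(\Gamma_1)}$ on the mean-zero subspace. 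This is the step I expect to be the main obstacle: the bound holds only on a subspace of $H^{1/2}(\Gamma_1)$ of codimension one, so $\sigma_n$ need not belong to $L^\infty_{1/g}(\Gamma_1)$ as it stands. The plan is to extend the restriction of $\sigma_n$ to the mean-zero subspace (a subspace of $L^1_g(\Gamma_1)$) by Hahn--Banach to a functional of norm $\le 1$ on $L^1_g(\Gamma_1)$, represented (since $(L^1_g(\Gamma_1))'=L^\infty_{1/g}(\Gamma_1)$) by some $\tilde\sigma_n$ with $|\tilde\sigma_n|\le g$; since $\sigma_n$ and $\tilde\sigma_n$ agree on a codimension-one subspace of $H^{1/2}(\Gamma_1)$, they differ there by a multiple of the mean functional, i.e.\ $\sigma_n=\tilde\sigma_n-\delta$ for a constant $\delta$. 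A short computation from Definition~\ref{Def b.1} (shifting the pressure by a constant $c$ changes $b(v,p)$ by $-c\int_{\Gamma_1}v_n\,ds$) gives $\sigma_n(u,p+c)=\sigma_n(u,p)-c$, so replacing $p$ by $p-\delta$ turns $\sigma_n$ into $\tilde\sigma_n$, i.e.\ $|\sigma_n|\le g$. The complementarity $\sigma_n u_n+g|u_n|=0$ then follows by putting $\eta=-u_n$ (mean-zero, hence admissible) in the displayed inequality, which gives $\int_{\Gamma_1}(\sigma_n u_n+g|u_n|)\,ds\le 0$, while the integrand is $\ge 0$; thus $(u,p)$ solves Problem~PDE.

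For non-uniqueness in part~(ii): if $(u,p^*)$ also solves Problem~PDE, subtracting the two instances of (\ref{b.50}) gives $b(v,p-p^*)=0$ for all $v\in\mathring V$, so $\nabla(p-p^*)=0$ and $p-p^*$ equals a constant $\delta(t)$, uniquely determined; then $\sigma_n(u,p)=\sigma_n(u,p^*+\delta)=\sigma_n(u,p^*)-\delta$, which is (\ref{d.9}). Finally, for (iii): wherever $u_n\ne 0$ the complementarity forces $\sigma_n(u,p)=\sigma_n(u,p^*)=-g\,\mathrm{sign}(u_n)$, hence $\delta=0$ on $\{u_n\ne 0\}$; if that set has positive measure, the constant $\delta$ must vanish.
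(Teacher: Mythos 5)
Your proposal is correct and follows essentially the same route as the paper: de~Rham's theorem on $\mathring V_\sigma$ for the pressure, the bound $|\langle\sigma_n,\eta\rangle|\le\|\eta\|_{L^1_g(\Gamma_1)}$ only on the mean-zero trace subspace, a Hahn--Banach extension to $L^1_g(\Gamma_1)$, identification of the discrepancy as a constant absorbed into the pressure, and the complementarity condition forcing $\sigma_n=-g\,\mathrm{sign}(u_n)$ on $\{u_n\neq0\}$ to kill $\delta(t)$ in (iii). The only cosmetic difference is that the paper invokes $\int_{\Gamma_1}u_n\,ds=0$ to exhibit both sets $A_\pm$ of positive measure, whereas you observe that one set where $u_n\neq0$ already suffices; both arguments are valid.
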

\begin{proof}
	(i) This can be proved by the same way as Theorem \ref{Thm c.2}.
	
	(ii) For a.e.~$t\in (0, T)$ and $v\in\mathring V_\sigma$, it follows from (\ref{d.5}) that $(u', v) + a_0(u, v) + a_1(u, u, v) = (f, v)$, and thus there exists unique $\mathring p \in\mathring Q$ such that
	\begin{equation*}
		(u', v) + a_0(u, v) + a_1(u, u, v) + b(v,\mathring p) = (f, v) \qquad (\forall v\in\mathring V).
	\end{equation*}
	According to Definition \ref{Def b.1}, $\mathring\sigma_n = \sigma_n(u,\mathring p)$ is well-defined, so that
	\begin{equation*}
		(u',v) + a_0(u,v) + b(v, \mathring p) + a_1(u,u,v) - \left<\mathring\sigma_n, v_n\right>_{H^{1/2}(\Gamma_1)} = (f,v) \qquad (\forall v\in V_\tau).
	\end{equation*}
	Substituting this equation into (\ref{d.5}), we obtain $- \langle \mathring\sigma_n, v_n - u_n \rangle_{H^{1/2}(\Gamma_1)} \le j(v_n) - j(u_n)$ for all $v \in V_{\tau,\sigma}$.
	It follows from Lemma \ref{Lem b.10}(ii) that
	\begin{equation*}
		|\langle \mathring\sigma_n, \eta \rangle_{H^{1/2}(\Gamma_1)}| \le \int_{\Gamma_1}g|\eta|\,ds \qquad (\forall\eta\in H^{1/2}(\Gamma_1) \cap L^2_0(\Gamma_1)).
	\end{equation*}
	The Hahn-Banach theorem allows us to extend $\mathring\sigma_n$ to a linear functional $\sigma_n: L^1_g(\Gamma_1)\to\mathbb R$ satisfying the same inequality as above for all $\eta\in L^1_g(\Gamma_1)$.
	Therefore, $\sigma_n \in L^\infty_{1/g}(\Gamma_1)$ and $|\sigma_n| \le g$. In addition, $\sigma_nu_n + g|u_n| = 0$ follows.
	
	Since $\mathring\sigma_n - \sigma_n$ vanishes on $H^{1/2}(\Gamma_1) \cap L^2_0(\Gamma_1)$, there exists a constant $\delta(t)$ such that $\mathring\sigma_n - \sigma_n = \delta(t)$.
	Now, by setting $p(t) =\mathring p(t) + \delta(t)$, it follows that $\sigma_n$ given above actually equals $\sigma_n(u(t), p(t))$ and that $(u(t), p(t))$ solves Problem PDE.
	Relation (\ref{d.9}) can be verified by a similar argument.
	
	(iii) Since $\int_{\Gamma_1}u_n\,ds = \int_\Omega \mathrm{div}\,u\,dx = 0$, the assumption $u_n(t)\neq0$ implies that there exist subsets $A_+, A_-$ of $\Gamma_1$ with positive $d-1$ dimensional Lebesgue measure satisfying $u_n(t) > 0$ on $A_+$ and $u_n(t) < 0$ on $A_-$.
	Because $|\sigma_n| \le g$ and $\sigma_nu_n + g|u_n| = 0$ on $\Gamma_1$, $\sigma_n = -g(t)$ on $A_+$ and $\sigma_n = g(t)$ on $A_-$.
	Hence $\delta(t)$ in (\ref{d.9}) cannot be other than zero.
\end{proof}
\begin{Rem}
	Since $|\sigma_n| \le g$, $\delta(t)$ is no more than $2g(t)$ nor less than $-2g(t)$.
\end{Rem}

\subsection{Main theorem}
Let us state our main theorems for the case of LBCF.
As in SBCF, some compatibility condition is necessary; it is rather complicated because normal stress at $t=0$ involves  a pressure at $t=0$, which is not given as a data.
The precise description is as follows: we say that LBCF is satisfied at $t=0$ if $u_0\in H^2(\Omega)\cap V_{\tau,\sigma}$ and there exists $p_0\in H^1(\Omega)^d$ such that
\begin{equation}
	|\sigma_n(u_0, p_0)| \le g(0)  \quad\text{and}\quad \sigma_n(u_0, p_0)u_{0n} + g(0)|u_{0n}| = 0 \quad\text{a.e. on }\; \Gamma_1 \label{d.53}
\end{equation}
We remark that a similar compatibility condition appears in nonlinear semigroup approaches (see \cite{Fuj01,Fuj02}).

Furthermore, in order to overcome a difficulty arising from (\ref{a.4}), we need no-leak condition at $t=0$, that is, $u_{0n}=0$ on $\Gamma_1$.
In view of (\ref{d.53}), this is automatically satisfied if $|\sigma_n(u_0, p_0)| < g(0)$ on $\Gamma_1$.
Examining our proof of the a priori estimates carefully, one finds that this assumption can be weaken to the condition that $\|u_{0n}\|_{L^2(\Gamma_1)}$ is sufficiently small.

Including what we have discussed above, we assume the followings:
\begin{enumerate}[\qquad(L1)]
	\item $f\in H^1(0,T; L^2(\Omega)^d)$. \label{L1}
	\item $g\in H^1(0, T; L^2(\Gamma_1))$ with $g(0)\in H^1(\Gamma_1)$. \label{L2}
	\item $u_0\in H^2(\Omega)^d\cap V_{\tau,\sigma}$, and LBCF is satisfied at $t=0$. \label{L3}
	\item $u_{0n} = 0$ a.e.~on $\Gamma_1$. \label{L4}
\end{enumerate}

\begin{Thm}\label{Thm d.1}
	Under {\rm (L\ref{L1})--(L\ref{L4})} above, there exists a unique solution $u$ of Problem $\mathrm{VI}_\sigma$ on some interval $(0, T')$, with $T'\le T$, such that
	\begin{equation*}
		u\in L^\infty(0,T'; V_{\tau,\sigma}), \qquad u'\in L^\infty(0,T'; L^2(\Omega)^d)\cap L^2(0,T'; V_{\tau,\sigma}).
	\end{equation*}
\end{Thm}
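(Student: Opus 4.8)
The plan is to follow the same architecture as the proof of Theorem~\ref{Thm c.1} for SBCF: reformulate the regularized variational inequality as a variational equation, solve it by Galerkin's method, derive $m$- and $\epsilon$-independent a priori estimates, and pass to the limits $m\to\infty$, $\epsilon\to0$. Uniqueness has already been handled by Proposition~\ref{Prop c.4}, whose proof was deliberately written so as to apply verbatim to the LBCF case; so only existence (on a short interval $(0,T')$) needs a new argument. First I would introduce the regularized functional $j_\epsilon$ built from the same $\rho_\epsilon$ of properties (a)--(d), and the corresponding Problems VI$_\sigma^\epsilon$-LBCF and VE$_\sigma^\epsilon$-LBCF posed over $V_{\tau,\sigma}$ (with the boundary term $\int_{\Gamma_1}g\,\alpha_\epsilon(u_{\epsilon n})\,v_n\,ds$), and construct a perturbed initial datum $u_0^\epsilon\in V_{\tau,\sigma}$ solving the elliptic variational inequality associated to $(-\nu\Delta u_0+\nabla p_0)$, using the compatibility condition (L\ref{L3}); the analogue of Lemma~\ref{Lem c.1} then gives $u_0^\epsilon\to u_0$ in $H^1$ and the $H^2$-bound, now also involving $\|p_0\|_{H^1}$. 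Note that because of (L\ref{L4}), $u_{0n}=0$ on $\Gamma_1$, so the perturbation inherits smallness of the normal trace as $\epsilon\to0$.

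The heart of the matter is the a priori estimate, which is where LBCF genuinely diverges from SBCF. Differentiating the Galerkin equation in $t$ and testing with $u_m'$ produces, in addition to the terms already present in \eqref{c.20}, the non-vanishing trilinear term $a_1(u_m,u_m',u_m')=\tfrac12\int_{\Gamma_1}u_{mn}|u_m'|^2\,ds$ coming from Lemma~\ref{Lem b.2}(ii), which is bounded by $\gamma_1\|u_{mn}\|_{L^2(\Gamma_1)}\|u_m'\|_{H^1}^2$. This is precisely the effect of \eqref{a.4}. The plan is to run a continuation (bootstrap) argument in $t$ exactly as in Proposition~\ref{Prop c.2}(ii): choose $T'$ maximal such that $\gamma_1\|u_{mn}(t)\|_{L^2(\Gamma_1)}$ (controlled via $\|u_m(t)\|_{H^1}$ and Lemma~\ref{Lem b.10}(i)) stays below $\alpha/(\text{const})$, absorb the bad boundary term into $\alpha\|u_m'\|_{H^1}^2$, obtain a Gronwall estimate for $\|u_m'\|_{L^2}$ and $\|u_m'\|_{L^2(0,T';V_{\tau,\sigma})}$, and then close the loop by $\|u_m(T')-u_m(0)\|_{H^1}\le\sqrt{T'}\,\|u_m'\|_{L^2(0,T';V_{\tau,\sigma})}$ to push $T'$ below by an $m,\epsilon$-independent constant. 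Crucially the starting point of the continuation works because $u_{0n}^\epsilon$ is small (by (L\ref{L4}) and the initial-perturbation lemma), so $\gamma_1\|u_{0n}^\epsilon\|_{L^2(\Gamma_1)}$ is strictly below the threshold for small $\epsilon$. The bound on $\|u_m'(0)\|_{L^2}$ is obtained as before by testing the undifferentiated Galerkin equation at $t=0$ with $u_m'(0)$ and using the construction of $u_0^\epsilon$ together with the $H^2$-estimate; here the pressure $p_0$ enters through the term $(-\nu\Delta u_0+\nabla p_0,u_m'(0))$, so the final bound depends on $\|u_0\|_{H^2}$, $\|p_0\|_{H^1}$, $\|g(0)\|_{H^1}$, $\|f(0)\|_{L^2}$.

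With these uniform bounds, passing to the limit is essentially identical to Proposition~\ref{Prop c.3}: extract weak-$*$ limits in $L^\infty(0,T';V_{\tau,\sigma})$ and weak limits in $L^2(0,T';V_{\tau,\sigma})$ for $u_m$ and $u_m'$; use Aubin--Lions compactness to get strong $L^2(0,T';L^4(\Omega)^d)$ convergence and strong $L^2(\Gamma_1\times(0,T'))$ convergence of the normal traces, hence a.e.\ convergence of $\alpha_\epsilon(u_{\epsilon n})$; pass the variational equation to $u_\epsilon$, then use convexity/lower-semicontinuity of $a_0$ and the property \eqref{c.28} (so $\int j_\epsilon\to\int j$) to pass the variational inequality to the limit $\epsilon\to0$, and finally recover \eqref{d.5} pointwise in $t$ via the Lebesgue differentiation argument of \cite[pp.56--57]{DuLi72}. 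The initial condition $u(0)=u_0$ follows from $u_0^\epsilon\to u_0$ and continuity of the time trace. Combined with the uniqueness from Proposition~\ref{Prop c.4}, this yields the unique strong solution on $(0,T')$. The main obstacle is the non-coercive boundary term $\tfrac12\int_{\Gamma_1}u_{mn}|u_m'|^2\,ds$: making the continuation argument rigorous requires that the threshold constant beaten by $\gamma_1\|u_{mn}\|_{L^2(\Gamma_1)}$ leaves enough room after all other absorptions (those into $\alpha\|u_m'\|_{H^1}^2$ from the $a_1(u_m',u_m,u_m')$, $g'$-, and $f'$-terms), and that the smallness of the \emph{initial} normal leak (guaranteed by (L\ref{L4})) really does propagate through the perturbation $u_0\mapsto u_0^\epsilon$; this is exactly the point flagged in the Remark following Lemma~\ref{Lem b.2} about $\gamma_1$ not being absorbed into the generic constants.
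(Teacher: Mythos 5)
Your proposal follows essentially the same route as the paper: regularization of $j_n$, the perturbed initial datum $u_0^\epsilon$ built from the compatibility condition (L\ref{L3}) with $p_0$, Galerkin approximation, a continuation argument keeping $\gamma_1\|u_{mn}(t)\|_{L^2(\Gamma_1)}$ below a fraction of $\alpha$ (seeded by (L\ref{L4}) and the smallness of $\|u_{0n}^\epsilon\|_{L^2(\Gamma_1)}$), the bound on $\|u_m'(0)\|_{L^2}$ via the construction of $u_0^\epsilon$, and the limit passage as in Proposition~\ref{Prop c.3}. The only cosmetic difference is that the paper closes the continuation loop by estimating $\|u_{mn}(T_1)-u_{mn}(0)\|_{L^2(\Gamma_1)}$ directly through $\int_0^{T_1}\|u_{mn}'\|_{L^2(\Gamma_1)}\,dt$ rather than through $\|u_m(T')-u_m(0)\|_{H^1}$, and it also applies the same threshold to absorb the non-vanishing term $a_1(u_m,u_m,u_m)$ in the undifferentiated energy estimate; neither changes the substance.
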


The uniqueness can be proved by the same way as Proposition \ref{Prop c.4}.
We can also obtain $p\in L^\infty(0, T'; L^2(\Omega))$ by a similar manner to Proposition \ref{Prop c.5}, using the rather infamous inf-sup condition (see \cite[Lemma 2.2]{S04})
\begin{equation*}
	C \|p\|_{L^2(\Omega)} \le \sup_{v\in V_\tau} \frac{b(v, p)}{\|v\|_{H^1(\Omega)^d}} \qquad (\forall p\in L^2(\Omega)).
\end{equation*}

The rest of this section is devoted to the proof of the existence. To state regularized problems, for fixed $\epsilon>0$ we introduce
\begin{equation*}
	j_\epsilon(\eta) = \int_{\Gamma_1}g\rho_\epsilon(\eta)\,ds,
\end{equation*}
where $\rho_\epsilon$ is a function such that
\begin{enumerate}[\quad(a)]
	\item $\rho_\epsilon \in C^2(\mathbb R)$ is a nonnegative convex function.
	\item For all $z\in\mathbb R$, $\big| \rho_\epsilon(z) - |z| \big| \le \epsilon$.
	\item If $\alpha_\epsilon$ denotes $d\rho_\epsilon/dz$, for all $z\in\mathbb R$ it holds that 
		\begin{equation}
			|\alpha(z)| \le 1 \quad\text{and}\quad \alpha_\epsilon(z)z \ge 0. \label{d.12}
		\end{equation}
	\item Let $\beta_\epsilon = d^2\rho_\epsilon/dz^2$. Then $\beta_\epsilon\ge0$ (due to the convexity of $\rho_\epsilon$).
\end{enumerate}
Such $\rho_\epsilon$ does exist; for example, if we define $\rho_\epsilon(z) = |z| - (1 - \frac2\pi)\epsilon$ if $|z|\ge\epsilon$, $\rho_\epsilon(z) = \frac{2\epsilon}\pi (1 - \cos\frac\pi{2\epsilon}|z|)$ if $|z|\le\epsilon$,
then this $\rho_\epsilon$ enjoys all of (a)--(d) above.

Since $\rho_\epsilon$ is differentiable, the functional $j_\epsilon$ is G\^ateaux differentiable, with its derivative $Dj_\epsilon(\eta)\in H^{1/2}(\Gamma_1)'$ computed by
\begin{equation*}
	\left< Dj_\epsilon(\eta), \xi \right>_{H^{1/2}(\Gamma_1)} = \int_{\Gamma_1}g\alpha_\epsilon(\eta)\xi\,ds \qquad (\forall \eta,\xi \in H^{1/2}(\Gamma_1)).
\end{equation*}

Now let us state the regularized problems.
\begin{Prob}{VI$_\sigma^\epsilon$-LBCF}
	For a.e.~$t\in(0,T)$, find $u_\epsilon(t)\in V_{\tau,\sigma}$  such that $u_\epsilon'(t) \in L^2(\Omega)^d$, $u_\epsilon(0) = u_0^\epsilon$ and 
	\begin{align*}
		&(u_\epsilon', v-u_\epsilon) + a_0(u_\epsilon, v-u_\epsilon) + a_1(u_\epsilon, u_\epsilon, v-u_\epsilon) + j_\epsilon(v_n) - j_\epsilon(u_{\epsilon n}) \\
		\ge &(f, v-u_\epsilon) \hspace{9cm}(\forall v\in V_{\tau,\sigma}).
	\end{align*}
\end{Prob}
\vspace{1mm}
\begin{Prob}{VE$_\sigma^\epsilon$-LBCF}
	For a.e.~$t\in(0,T)$, find $u_\epsilon(t)\in V_{\tau,\sigma}$  such that $u_\epsilon'(t) \in L^2(\Omega)^d$, $u_\epsilon(0) = u_0^\epsilon$ and 
	\begin{equation*}
		(u_\epsilon', v) + a_0(u_\epsilon, v) + a_1(u_\epsilon, u_\epsilon, v) + \int_{\Gamma_1}g\alpha_\epsilon(u_{\epsilon n}) v_n ds = (f, v) \qquad (\forall v\in V_{\tau,\sigma}).
	\end{equation*}
\end{Prob}
As in Proposition \ref{Prop c.1}, Problems $\mathrm{VI}_\sigma^\epsilon$ and $\mathrm{VE}_\sigma^\epsilon$ are equivalent.
The construction of the perturbed initial velocity $u_0^\epsilon$ is similar to that of SBCF.
In fact, since LBCF holds at $t=0$ by (L\ref{L3}), the Green formula leads to
\begin{align*}
	&a_0(u_0, v - u_0) + \int_{\Gamma_1} g(0) |v_n|\, ds - \int_{\Gamma_1} g(0) |u_{0n}|\, ds \\[-2mm]
	\ge &(-\nu\Delta u_0 + \nabla p_0, v - u_0) \hspace{5cm} (\forall v\in V_{\tau,\sigma}).
\end{align*}
We consider the regularized problem: find $u_0^\epsilon \in V_{\tau,\sigma}$ such that
\begin{align}
	&a_0(u_0^\epsilon, v - u_0^\epsilon) + \int_{\Gamma_1}\!\! g(0)\rho_\epsilon(v_n)\, ds - \int_{\Gamma_1}\!\! g(0)\rho_\epsilon(u_{0n}^\epsilon)\, ds \notag \\[-2mm]
	\ge &(-\nu\Delta u_0 + \nabla p_0, v - u_0^\epsilon) \hspace{5cm} (\forall v\in V_{\tau,\sigma}), \label{d.11}
\end{align}
which is equivalent to (cf. Proposition \ref{Prop c.1})
\begin{equation}
	a_0(u_0^\epsilon, v) + \int_{\Gamma_1} g(0)\alpha_\epsilon(u_{0n}^\epsilon)v_n\, ds = (-\nu\Delta u_0 + \nabla p_0, v) \qquad (\forall v\in V_{\tau,\sigma}). \label{d.17}
\end{equation}
The elliptic variational inequality (\ref{d.11}) admits a unique solution $u_0^\epsilon$, which is the perturbation of $u_0$ in question.
With this setting, we find:
\begin{Lem}\label{Lem d.1}
	{\rm (i)} When $\epsilon\to0$, $u_0^\epsilon\to u_0$ strongly in $H^1(\Omega)^d$.
	In particular, it follows that $u_0^\epsilon \to 0$ in $L^2(\Gamma_1)$.
	
	{\rm (ii)} $u_0^\epsilon \in H^2(\Omega)^d$ and
	\begin{equation}
		\|u_0^\epsilon\|_{H^2(\Omega)^d} \le C(\|\nu\Delta u_0 + \nabla p_0\|_{L^2(\Omega)^d} + \|g(0)\|_{H^1(\Gamma_1)}). \label{d.55}
	\end{equation}
\end{Lem}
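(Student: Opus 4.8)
The plan is to mirror the proof of Lemma~\ref{Lem c.1} from the SBCF case, since the construction of $u_0^\epsilon$ via the elliptic variational inequality~(\ref{d.11}) is structurally identical, the only genuine new point being that the test-space is $V_{\tau,\sigma}$ rather than $V_{n,\sigma}$ and that we must track the consequence for the normal trace on $\Gamma_1$.

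For part~(i), first I would take $v=u_0$ in~(\ref{d.11}) and $v=u_0^\epsilon$ in the analogous unregularized inequality (the one stated just before~(\ref{d.11}), valid since LBCF holds at $t=0$ by (L\ref{L3})), then add the two. The right-hand sides combine to $(-\nu\Delta u_0+\nabla p_0,\,u_0-u_0^\epsilon)+(-\nu\Delta u_0+\nabla p_0,\,u_0^\epsilon-u_0)=0$, and the bilinear terms produce $-a_0(u_0^\epsilon-u_0,\,u_0^\epsilon-u_0)$, so Korn's inequality~(\ref{b.3}) gives
\begin{equation*}
	\alpha\|u_0^\epsilon-u_0\|_{H^1(\Omega)^d}^2 \le \int_{\Gamma_1} g(0)\bigl(|u_{0n}^\epsilon|-\rho_\epsilon(u_{0n}^\epsilon)\bigr)\,ds + \int_{\Gamma_1} g(0)\bigl(\rho_\epsilon(u_{0n})-|u_{0n}|\bigr)\,ds \le 2\epsilon\int_{\Gamma_1} g(0)\,ds,
\end{equation*}
using property~(b) of $\rho_\epsilon$ in exactly the form~(\ref{c.28}); this tends to $0$ as $\epsilon\to0$. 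The "in particular" clause then follows from the continuity of the trace operator $H^1(\Omega)^d\to L^2(\Gamma_1)^d$ (so $u_0^{\epsilon}\to u_0$ in $L^2(\Gamma_1)^d$) together with (L\ref{L4}), which says $u_{0n}=0$; hence $u_{0n}^\epsilon\to 0$ in $L^2(\Gamma_1)$ — that is what the statement should read, and this is where (L\ref{L4}) enters.

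For part~(ii), I would invoke the regularity theorem for elliptic variational inequalities of this friction type, \cite[Lemma~5.2]{S04}, applied to~(\ref{d.11}): since $g(0)\in H^1(\Gamma_1)$ by (L\ref{L2}) and the datum $-\nu\Delta u_0+\nabla p_0$ lies in $L^2(\Omega)^d$ (because $u_0\in H^2(\Omega)^d$ and $p_0\in H^1(\Omega)^d$ by (L\ref{L3})), that result yields $u_0^\epsilon\in H^2(\Omega)^d$ with the bound~(\ref{d.55}), uniformly in $\epsilon$. As in the SBCF case one should remark that although the cited paper works with a slightly different $\rho_\epsilon$ and with the tangential rather than normal trace, the proof of that lemma carries over verbatim; the inf-sup type control needed here is the one recorded after Theorem~\ref{Thm d.1}.

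The main obstacle — really the only one requiring care — is confirming that the $H^2$-regularity machinery of \cite{S04}, originally set up for the slip (tangential) friction problem, applies to the leak (normal) friction problem~(\ref{d.11}); this rests on Lemma~\ref{Lem b.10} (the trace and solenoidal-extension statements for the normal component on $\Gamma_1$) playing the role that Lemma~\ref{Lem b.1} plays for SBCF, and on the inf-sup inequality for $b$ over $V_\tau$. Everything else is a routine transcription of the argument already given for Lemma~\ref{Lem c.1}.
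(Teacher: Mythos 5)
Your proof is correct and follows the paper's approach exactly: part (i) transcribes the argument of Lemma~\ref{Lem c.1}(i) (and your reading of the ``in particular'' clause as $u_{0n}^\epsilon\to0$ in $L^2(\Gamma_1)$ via trace continuity and (L\ref{L4}) is the intended one), while part (ii) appeals to the elliptic regularity theory of \cite{S04}. The one adjustment: for the leak problem the relevant result is \cite[Lemma 4.1]{S04}, which treats the normal-trace (leak) friction inequality directly, rather than \cite[Lemma 5.2]{S04} (the slip case), so the tangential-to-normal transfer you flag as the main obstacle is already carried out in the cited reference and needs no further argument here.
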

\begin{proof}
	(i) is proved by the same way as Lemma \ref{Lem c.1}(i).
	Since $g(0) \in H^1(\Gamma_1)$ by (L\ref{L3}), (ii) is a direct consequence of \cite[Lemma 4.1]{S04}.
\end{proof}
\begin{Rem}
	By (i) and (L\ref{L4}), for sufficiently small $\epsilon>0$ we have
	\begin{equation}
		\|u_0^\epsilon\|_{L^2(\Omega)^d} \le 2\|u_0\|_{L^2(\Omega)^d},\quad \|u_0^\epsilon\|_{H^1(\Omega)^d} \le 2\|u_0\|_{H^1(\Omega)^d},\quad \|u_{0n}^\epsilon\|_{L^2(\Gamma_1)} \le \frac\alpha{8\gamma_1}, \label{d.20}
	\end{equation}
	where $\alpha$ and $\gamma_1$ are the constants in (\ref{b.3}) and (\ref{b.10}) respectively.
\end{Rem}
\begin{Rem} \label{Rem d.10}
	As in SBCF, if $\Gamma_0$ is $C^2$ and $\Gamma_1$ is $C^4$, then we can apply Lemma 4.1 of \cite{S04}.
	On the other hand, $g(0)\in H^{1/2}(\Gamma_1)$, stated in \cite{S04}, is actually insufficient to deduce the $H^2$-$H^1$ regularity (see the errata of \cite{S04}).
\end{Rem}

To solve Problem $\mathrm{VE}_\sigma^\epsilon$, let us construct approximate solutions by Galerkin's method.
Since $V_{\tau,\sigma}\subset H^1(\Omega)^d$ is separable, there exist $w_1,w_2,. . . \in V_{\tau,\sigma}$, linear independent to each other, such that $\bigcup_{m=1}^\infty \mathrm{span}\{w_k\}_{k=1}^m \subset V_{\tau,\sigma}$ dense in $H^1(\Omega)^d$.
Here we may assume $w_1 = u_0^\epsilon$.
\vspace{2mm}
\begin{Prob}{VE$_\sigma^{\epsilon,m}$-LBCF}
	Find $c_k\in C^2([0, T])\, (k=1, . . . ,m)$ such that $u_m\in V_{\tau,\sigma}$ defined by $u_m = \sum_{k=1}^m c_k(t)w_k$ satisfies $u_m(0) = u_0^\epsilon$ and
	\begin{align}
		&(u_m', w_k) + a_0(u_m, w_k) + a_1(u_m, u_m, w_k) + \int_{\Gamma_1} g\alpha_\epsilon(u_{mn})w_{kn}ds = (f, w_k) \notag \\[-2mm]
		&\hspace{9cm}	(k=1, . . . , m). \label{d.10}
	\end{align}
\end{Prob}
\noindent Since $\alpha_\epsilon\in C^1(\mathbb R)$, there exist unique solutions $c_k\in C^2([0, \tilde T])\, (k=1, . . . ,m)$ for some $\tilde T$, which may depend on $m$ and $\epsilon$ at this stage.

\begin{Prop} \label{Prop d.1}
	Assume {\rm (L\ref{L1})--(L\ref{L4})}, and let $\epsilon>0$ be sufficiently small so that $(\ref{d.20})$ holds.
	Then there exists some interval $(0, T')$ such that $u_m\in L^\infty(0, T'; V_{\tau,\sigma})$ and $u_m'\in L^\infty(0, T'; L^2(\Omega)^d) \cap L^2(0, T'; V_{\tau,\sigma})$ are uniformly bounded with respect to $m$ and $\epsilon$.
	Here, $T'$ is independent of $m$ and $\epsilon$.
\end{Prop}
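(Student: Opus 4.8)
The proof follows the scheme of Proposition~\ref{Prop c.2}, the crucial new feature being that the trilinear term no longer cancels: by Lemma~\ref{Lem b.2}(ii) one must retain the boundary contribution, which (\ref{b.10}) dominates by $\gamma_1\|u_{mn}\|_{L^2(\Gamma_1)}\|\,\cdot\,\|_{H^1(\Omega)^d}^2$. The whole point is that $\|u_{mn}\|_{L^2(\Gamma_1)}$ is small at $t=0$ by (L\ref{L4}) and (\ref{d.20}) (indeed $\gamma_1\|u_{0n}^\epsilon\|_{L^2(\Gamma_1)}\le\alpha/8$), and stays small for a short time because $u_{mn}(t)-u_{0n}^\epsilon=\bigl(\int_0^tu_m'\,ds\bigr)_n$, which Lemma~\ref{Lem b.10}(i) bounds by $C\sqrt{t}\,\|u_m'\|_{L^2(0,t;V_{\tau,\sigma})}$. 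So I run a continuity argument. Choose a Young constant $\gamma>0$ in (\ref{b.6}) with $\gamma\|u_0\|_{H^1(\Omega)^d}\le\alpha/24$, so that $\gamma\|u_0^\epsilon\|_{H^1(\Omega)^d}\le\alpha/12$ by (\ref{d.20}), and let $T'=T'(m,\epsilon)\in(0,T]$ be the largest time such that the $c_k$ exist on $[0,T']$ and, there,
\begin{equation*}
	\gamma\|u_m(t)\|_{H^1(\Omega)^d}\le\frac{\alpha}{6}\qquad\text{and}\qquad\gamma_1\|u_{mn}(t)\|_{L^2(\Gamma_1)}\le\frac{\alpha}{4}.
\end{equation*}
Both inequalities are strict at $t=0$, so $T'>0$; the estimates below bound $\|u_m\|_{H^1(\Omega)^d}$ — hence the $c_k$ — uniformly on $[0,T')$, precluding finite-time blow-up, so if $T'<T$ then one of the two inequalities is an equality at $t=T'$.

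On $[0,T']$ the zeroth-order estimate runs as in Proposition~\ref{Prop c.2}: multiplying (\ref{d.10}) by $c_k$ and summing, the new term $a_1(u_m,u_m,u_m)$ is absorbed into $a_0(u_m,u_m)$ because $\gamma_1\|u_{mn}\|_{L^2(\Gamma_1)}\le\alpha/4$, the $\alpha_\epsilon$-boundary term is discarded by the sign condition in (\ref{d.12}), and one gets $\frac{d}{dt}\|u_m\|_{L^2}^2+\alpha\|u_m\|_{H^1}^2\le C\|f\|_{L^2}^2$; hence $\|u_m\|_{L^\infty(0,T';L^2(\Omega)^d)}$ and $\|u_m\|_{L^2(0,T';V_{\tau,\sigma})}$ are bounded uniformly in $m,\epsilon,T'$. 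Differentiating (\ref{d.10}) in $t$, multiplying by $c_k'$ and summing gives, besides the terms already appearing in (\ref{c.20}), the extra term $a_1(u_m,u_m',u_m')$, which Lemma~\ref{Lem b.2}(ii) and $\gamma_1\|u_{mn}\|_{L^2(\Gamma_1)}\le\alpha/4$ bound by $\frac{\alpha}{4}\|u_m'\|_{H^1}^2$; the term $a_1(u_m',u_m,u_m')$ is handled by (\ref{b.6}) and $\gamma\|u_m\|_{H^1}\le\alpha/6$ exactly as in Proposition~\ref{Prop c.2}(ii), the $g'$-term by $|\alpha_\epsilon|\le1$ and Lemma~\ref{Lem b.10}(i), and the $g\,\beta_\epsilon$-term is nonnegative. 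Collecting these estimates,
\begin{equation*}
	\frac{d}{dt}\|u_m'\|_{L^2}^2+c\|u_m'\|_{H^1}^2\le C\bigl(\|f'\|_{L^2}^2+\|g'\|_{L^2}^2\bigr)+C\|u_m'\|_{L^2}^2\qquad\text{on }[0,T'],
\end{equation*}
with uniform constants. Provided $\|u_m'(0)\|_{L^2}$ is controlled, Gronwall's inequality (using $T'\le T$) bounds $\|u_m'\|_{L^\infty(0,T';L^2(\Omega)^d)}$; feeding this back into the equation (via $\alpha\|u_m\|_{H^1}^2\le a_0(u_m,u_m)$) bounds $\|u_m\|_{L^\infty(0,T';V_{\tau,\sigma})}$, and integrating the differential inequality bounds $\|u_m'\|_{L^2(0,T';V_{\tau,\sigma})}$ — all uniformly in $m,\epsilon,T'$.

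The bound on $\|u_m'(0)\|_{L^2}$ is obtained as in Proposition~\ref{Prop c.2}: setting $t=0$ in (\ref{d.10}) multiplied by $c_k'(0)$ and summing, then using the defining identity (\ref{d.17}) of $u_0^\epsilon$ with test function $u_m'(0)\in V_{\tau,\sigma}$ to replace the $a_0$ and $\alpha_\epsilon$ terms by $(-\nu\Delta u_0+\nabla p_0,u_m'(0))$, one finds
\begin{equation*}
	\|u_m'(0)\|_{L^2}^2=(f(0),u_m'(0))-(-\nu\Delta u_0+\nabla p_0,u_m'(0))-a_1(u_0^\epsilon,u_0^\epsilon,u_m'(0)),
\end{equation*}
and the embedding $H^2(\Omega)\hookrightarrow L^\infty(\Omega)$ together with the $H^2$-bound (\ref{d.55}) gives $|a_1(u_0^\epsilon,u_0^\epsilon,u_m'(0))|\le C\|u_0^\epsilon\|_{H^2(\Omega)^d}^2\|u_m'(0)\|_{L^2}\le C(\|u_0\|_{H^2(\Omega)^d}+\|\nabla p_0\|_{L^2(\Omega)^d}+\|g(0)\|_{H^1(\Gamma_1)})^2\|u_m'(0)\|_{L^2}$; hence $\|u_m'(0)\|_{L^2}$ is bounded by a constant depending only on the data (with $f(0)$ meaningful by (L\ref{L1})) and independent of $m,\epsilon$. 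As with SBCF, the compatibility condition (L\ref{L3}) is essential at this point: a naive estimate of the $\alpha_\epsilon$-term would require bounding $\|u_{mn}'(0)\|_{L^2(\Gamma_1)}$ by $\|u_m'(0)\|_{L^2}$, which is impossible.

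Finally, bound $T'$ from below independently of $m,\epsilon$; assume $T'<T$. If $\gamma\|u_m(T')\|_{H^1}=\alpha/6$, then, since $\gamma\|u_m(0)\|_{H^1}\le\alpha/12$,
\begin{equation*}
	\frac{\alpha}{12\gamma}\le\|u_m(T')-u_m(0)\|_{H^1}=\Bigl\|\int_0^{T'}u_m'\,ds\Bigr\|_{H^1}\le\sqrt{T'}\,\|u_m'\|_{L^2(0,T';V_{\tau,\sigma})},
\end{equation*}
so the uniform bound on $\|u_m'\|_{L^2(0,T';V_{\tau,\sigma})}$ yields $T'\ge\tau_1>0$. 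If instead $\gamma_1\|u_{mn}(T')\|_{L^2(\Gamma_1)}=\alpha/4$, then $\|u_{mn}(T')-u_{0n}^\epsilon\|_{L^2(\Gamma_1)}\ge\alpha/(8\gamma_1)$, whereas Lemma~\ref{Lem b.10}(i) gives $\|u_{mn}(T')-u_{0n}^\epsilon\|_{L^2(\Gamma_1)}=\bigl\|\bigl(\int_0^{T'}u_m'\,ds\bigr)_n\bigr\|_{L^2(\Gamma_1)}\le C\sqrt{T'}\,\|u_m'\|_{L^2(0,T';V_{\tau,\sigma})}$, so $T'\ge\tau_2>0$. Therefore the asserted bounds hold on the fixed interval $(0,\min\{\tau_1,\tau_2,T\})$, the argument being identical for $d=2$ and $d=3$. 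The main obstacle is exactly the one flagged by (\ref{a.4}) and the remark after Lemma~\ref{Lem b.2}: the boundary term $\frac12\int_{\Gamma_1}u_{mn}|\,\cdot\,|^2\,ds$ is absorbable into $a_0$ only while the leak $\|u_{mn}\|_{L^2(\Gamma_1)}$ stays below $\alpha/(4\gamma_1)$, which is what forces the time interval to be short — but, crucially, independent of $m$ and $\epsilon$.
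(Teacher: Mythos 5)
Your proof is correct and follows essentially the same route as the paper's: the same zeroth- and first-order Galerkin energy estimates, the same absorption of the boundary term $\tfrac12\int_{\Gamma_1}u_{mn}|\cdot|^2\,ds$ under the continuation condition $\gamma_1\|u_{mn}\|_{L^2(\Gamma_1)}\le\alpha/4$, the same use of the regularized compatibility identity (\ref{d.17}) together with (\ref{d.55}) to control $\|u_m'(0)\|_{L^2}$, and the same $\sqrt{T'}$ argument for the uniform lower bound on the existence time. The only (harmless) deviation is that you impose both smallness conditions and use the three-dimensional interpolation inequality (\ref{b.6}) uniformly for $d=2$ and $d=3$, whereas the paper treats $d=2$ with (\ref{b.5}) and only the leak condition $T_1$; your exit-time bound $\alpha/(8\gamma_1)$ is in fact the correct constant.
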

\begin{proof}
	Due to space limitations, we sometimes simply write $\|u\|_{L^2}, \|g\|_{L^2}, . . . $, instead of $\|u\|_{L^2(\Omega)^2}, \|g\|_{L^2(\Gamma_1)}, . . . $, when there is no fear of confusion.
	
	First we consider the case $d=2$.
	Multiplying (\ref{d.10}) by $c_k(t)$ for $k=1, . . . , m$, adding them, using (\ref{b.3}), (\ref{b.10}) and (\ref{d.12}), we obtain
	\begin{equation}
		\frac12 \frac d{dt}\|u_m\|_{L^2}^2 + (\alpha - \gamma_1\|u_{mn}\|_{L^2(\Gamma_1)})\|u_m\|_{H^1}^2 \le (f, u_m). \label{d.13}
	\end{equation}
	Since $\|u_{mn}(t)\|_{L^2(\Gamma_1)}$ is continuous with respect to $t$ and (\ref{d.20}) holds, there exists a maximum value $T_1\in (0, \tilde T]$ of $t$ such that $\gamma_1 \|u_{mn}(t)\|_{L^2(\Gamma_1)} \le \frac\alpha4.$
	If this inequality holds for all $0\le t\le \tilde T$, we take $T_1 = \tilde T$.
	Noting $|(f, u_m)| \le \frac\alpha4 \|u_m\|_{H^1}^2 + \frac1\alpha \|f\|_{L^2}^2$, we find from (\ref{d.13}) that
	\begin{equation*}
		\frac d{dt}\|u_m\|_{L^2}^2 + \alpha\|u_m\|_{H^1}^2 \le C\|f\|_{L^2}^2 \qquad (0\le t\le T_1).
	\end{equation*}
	Hence $u_m \!\in\! L^\infty(0, T_1; L^2)\cap L^2(0, T_1; V_{\tau,\sigma})$ is bounded independently of $m$, $\epsilon$.
	
	Next, differentiating (\ref{d.10}), multiplying the resulting equation by $c_k'(t)$, and adding them, we obtain
	\begin{align}
		&(u_m'', u_m') + a_0(u_m', u_m') + a_1(u_m', u_m, u_m') + a_1(u_m, u_m', u_m') \notag \\
		&\hspace{1cm}	+ \int_{\Gamma_1}g' \alpha_\epsilon(u_{mn}) u_{mn}'\, ds +  \int_{\Gamma_1}g \beta_\epsilon(u_{mn}) |u_{mn}'|^2\, ds = (f', u_m'). \label{d.14}
	\end{align}
	Here, we estimate each term in (\ref{d.14}) as follows:
	{\allowdisplaybreaks
	\begin{align}
		|a_1(u_m', u_m, u_m')| &\le C\|u_m'\|_{L^2} \|u_m\|_{H^1} \|u_m'\|_{L^2} \notag \\
			&\le \frac\alpha{12} \|u_m'\|_{H^1}^2 + C\|u_m\|_{H^1}^2 \|u_m'\|_{L^2}, \label{d.15}
	\end{align}
	\vspace{-5mm}
	\begin{gather*}
		|a_1(u_m, u_m', u_m')| = \left| \int_{\Gamma_1}u_{mn} |u_m'|^2\,ds \right| \le \gamma_1\|u_{mn}\|_{L^2(\Gamma_1)} \|u_m'\|_{H^1}^2 \le \frac\alpha4 \|u_m'\|_{H^1}^2, \\
		\left|\int_{\Gamma_1}g' \alpha_\epsilon(u_{mn}) u_{mn}'\, ds\right| \le C\|g'\|_{L^2} \|u_m'\|_{H^1} \le \frac\alpha{12} \|u_m'\|_{H^1}^2 + C\|g'\|_{L^2}^2, \\
		\int_{\Gamma_1}g \beta_\epsilon(u_{mn}) |u_{mn}'|^2\, ds \ge 0, \\
		|(f', u_m')| \le \frac\alpha{12} \|u_m'\|_{H^1}^2 + C\|f'\|_{L^2}^2.
	\end{gather*}
	}Collecting these estimates, we derive from (\ref{d.14}) that for $0\le t\le T_1$
	\begin{equation}
		\frac d{dt}\|u_m'\|_{L^2} + \alpha\|u_m'\|_{H^1}^2 \le C(\|f'\|_{L^2}^2 + \|g'\|_{L^2}^2) + C\|u_m\|_{H^1}^2 \|u_m'\|_{L^2}^2. \label{d.16}
	\end{equation}
	Combining the technique used in Proposition \ref{Prop c.2} with (\ref{d.17}) and (\ref{d.55}), we observe that $\|u_m'\|_{L^\infty\!(0, T_1; L^2)}$, $\|u_m'\|_{L^2(0, T_1; V_{\tau,\sigma})}$, and $\|u_m\|_{L^\infty(0, T_1; V_{\tau,\sigma})}$ are bounded by $C(f,g,u_0,p_0)$.
	
	It remains to show that $T_1$ is bounded from below independently of $m,\,\epsilon$. 
	If $\gamma_1\|u_{mn}(T_1)\|_{L^2(\Gamma_1)} < \alpha/4$ and thus $T_1=\tilde T$, we can extend $u_m(t)$ beyond $t = \tilde T$ and repeat the above discussion until we reach either 
	\begin{equation*}
		\max_{0\le t\le T}\gamma_1\|u_{mn}(t)\|_{L^2(\Gamma_1)} \le \alpha/4 \quad\text{or}\quad \gamma_1\|u_{mn}(T_1)\|_{L^2(\Gamma_1)} = \alpha/4.
	\end{equation*}
	In the former case $T_1=T$. In the latter case, we have
	\begin{align*}
		\frac\alpha{8\gamma_1} &\le \|u_{mn}(T_1)\|_{L^2(\Gamma_1)} - \|u_{mn}(0)\|_{L^2(\Gamma_1)} \le \|u_{mn}(T_1) - u_{mn}(0)\|_{L^2(\Gamma_1)} \\
			&\le \int_0^{T_1} \|u_{mn}'(t)\|_{L^2(\Gamma_1)}dt \le C\int_0^{T_1} \|u_m'\|_{H^1(\Omega)^d}dt \le C\sqrt{T_1} \|u_m'\|_{L^2(0, T_1; V_{\tau,\sigma})}.
	\end{align*}
	Hence $T_1$ is bounded from below, and we complete the proof for $d=2$.
	
	Second let us consider the case $d=3$.
	What changes from $d=2$ is that (\ref{d.15}) is replaced with
	\begin{align*}
		|a_1(u_m', u_m, u_m')| &\le C \|u_m'\|_{L^2}^{1/2} \|u_m\|_{H^1} \|u_m'\|_{H^1}^{3/2} \\
			&\le \gamma_2\|u_m\|_{H^1} \|u_m'\|_{H^1}^2  \!+\!  C\|u_m\|_{H^1} \|u_m'\|_{L^2}^2,
	\end{align*}
	where $\gamma_2$ can be arbitrarily small.
	We choose $\gamma_2$ satisfying $\gamma_2\|u_0\|_{H^1} \le \frac\alpha{48}$, so that $\gamma_2\|u_0^\epsilon\|_{H^1} \le \frac\alpha{24}$ by virtue (\ref{d.20}).
	Let $T_2$ be the maximum value of $t\in(0, \tilde T]$ such that $\gamma_2\|u_m(t)\|_{H^1} \le \frac\alpha{12}$.
	If this inequality holds for all $t\in(0, \tilde T]$, we set $T_2 = \tilde T$.
	Such $T_2$ does exist, and if $T_2<\tilde T$ then $\gamma_2\|u_m(T_2)\|_{H^1} = \frac\alpha{12}$.
	
	Therefore, setting $T' = \min(T_1, T_2)$, instead of (\ref{d.16}) we get
	\begin{equation*}
		\frac d{dt}\|u_m'\|_{L^2} + \alpha\|u_m'\|_{H^1}^2  \le  C(\|f'\|_{L^2}^2 + \|g'\|_{L^2}^2) + C\|u_m\|_{H^1} \|u_m'\|_{L^2}^2 \quad (0\le t \le T').
	\end{equation*}
	As a consequence, we see that $\|u_m'\|_{L^2(0, T'; V_{\tau,\sigma})}$, $\|u_m'\|_{L^\infty(0, T'; L^2)}$, $\allowbreak \|u_m\|_{L^\infty(0, T'; V_{\tau,\sigma})}$ are bounded by $C(f,g,u_0,p_0)$.
	
	Now, if $T_1<\tilde T$ or $T_2<\tilde T$ then $T'$ are bounded from below as follows:
	\begin{align*}
		\frac\alpha{12\gamma_1} &\le \|u_{mn}(T')\|_{L^2(\Gamma_1)} - \|u_{mn}(0)\|_{L^2(\Gamma_1)} \le \int_0^{T'} \|u_{mn}'\|_{L^2(\Gamma_1)}dt \\
			&\le C\int_0^{T'} \|u_m'\|_{H^1}dt \le C\sqrt{T_1} \|u_m'\|_{L^2(0, T'; V_{\tau,\sigma})}, \\
		\frac\alpha{24\gamma_2} &\le \|u_m(T')\|_{H^1} - \|u_m(0)\|_{H^1} \le \int_0^{T'}\|u_m'\|_{H^1}dt \le \sqrt{T'} \|u_m'\|_{L^2(0, T'; V_{\tau,\sigma})}.
	\end{align*}
	When $T_1= \tilde T$ and $T_2 = \tilde T$, we can extend $u_m(t)$ beyond $t = \tilde T$ and repeat the above discussion.
	This completes the proof of Proposition \ref{Prop d.1}.
\end{proof}

The last step of the proof, namely, passing to the limits $m\to\infty$ and $\epsilon\to0$ can be carried out by the same way as Proposition \ref{Prop c.3}, with $n$ replaced by $\tau$ and vice versa.
This proves that a solution of Problem VI$_\sigma$ exists, which, combined with the uniqueness result, completes the proof of Theorem \ref{Thm d.1}.
\begin{Rem}
	At first glance one may think Theorem \ref{Thm d.1}, where we get only a time-local solution in spite of a smallness assumption on $u_0$ even if $d=2$, is too poor.
	However, in view of the fact that we obtain only time-local solutions in 2D case under the linear leak b.c.~(see \cite[Theorem 6]{HRT96} or \cite{Mar07}),
	such limitations cannot be avoided to some extent.
\end{Rem}
\begin{Rem}
	Under additional smallness assumptions on the data $f, g, u_0, p_0$, we can derive global existence results for both $d=2$ and $d=3$.
\end{Rem}

\section{Concluding Remarks}
By the discussion presented above, we have established the existence and uniqueness, while we did not get in touch with higher regularity such as
\begin{equation*}
	u\in L^\infty(0, T; H^2(\Omega)^d),\, p\in L^\infty(0, T; H^1(\Omega)).
\end{equation*}
This is because some regularity results for the elliptic cases are not available.
For instance, Problem VI$_{\sigma}$-SBCF is rewritten as
\begin{align*}
	a_0(u, v - u) + j(v_\tau) - j(u_\tau) &\ge (f, v - u) - (u', v - u) - a_1(u, u, v - u) \\
		&=: \left<F(t), v - u\right>_{V_{n,\sigma}} \qquad (\forall v\in V_{n,\sigma}),
\end{align*}
with $F(t)\in L^p(\Omega)^d$ for some $p<2$.
If we prove this elliptic variational inequality has a unique solution in $W^{2,p}(\Omega)^d$ when $p<2$, then a technique similar to \cite[Theorems III.3.6 and III.3.8]{Tem77} allows us to deduce $u(t)\in H^2(\Omega)^d$.
Thereby, we need to extend the regularity theory of \cite{S04} to cases $p\neq2$.

\section*{Acknowledgements}
The author would like to thank for Professor Norikazu Saito for encouraging him through valuable discussions.
This work was supported by Grant-in-Aid for JSPS Fellows and CREST, JST.


\begin{thebibliography}{10}

\bibitem{ALL09}
R.~An, Y.~Li, and K.~Li, \emph{Solvability of {N}avier-{S}tokes equations with
  leak boundary conditions}, Acta Math.~Appl.~Sin.~Engl.~Ser. \textbf{25}
  (2009), 225--234.

\bibitem{AGS10}
M.~Ayadi, M.~K. Gdoura, and T.~Sassi, \emph{Mixed formulation for {S}tokes
  problem with {T}resca friction}, C. R. Math. Acad. Sci. Paris \textbf{348}
  (2010), 1069--1072.

\bibitem{Che03}
A.~Yu. Chebotarev, \emph{Modeling of steady flows in a channel by
  {N}avier-{S}tokes variational inequalities}, J. Appl. Mech. Tech. Phys
  \textbf{44} (2003), 852--857.

\bibitem{Con06}
L.~Consiglieri, \emph{Existence for a class of non-newtonian fluids with a
  nonlocal friction boundary condition}, Acta Math. Sin. (Engl. Ser.)
  \textbf{22} (2006), 523--534.

\bibitem{DuLi72}
G.~Duvaut and J.~L. Lions, \emph{Les in\'equations en m\'echanique et en
  physique}, Dunod, 1972.

\bibitem{F94}
H.~Fujita, \emph{A mathematical analysis of motions of viscous incompressible
  fluid under leak or slip boundary conditions}, RIMS K\^oky\^uroku
  \textbf{888} (1994), 199--216.

\bibitem{Fuj01}
\bysame, \emph{Non-stationary {S}tokes flows under leak boundary conditions of
  friction type}, J.~Comput.~Math. \textbf{19} (2001), 1--8.

\bibitem{Fuj02}
\bysame, \emph{A coherent analysis of {S}tokes flows under boundary conditions
  of friction type}, J.~Comput.~Appl.~Math. \textbf{149} (2002), 57--69.

\bibitem{FKS95}
H.~Fujita, H.~Kawarada, and A.~Sasamoto, \emph{Analytical and numerical
  approaches to stationary flow problems with leak and slip boundary
  conditions}, Lecture Notes in Num.~Appl.~Anal. \textbf{14} (1995), 17--31.

\bibitem{GiRa86}
V.~Girault and P.~A. Raviart, \emph{Finite element methods for
  {N}avier-{S}tokes equaitons}, Springer-Verlag, 1986.

\bibitem{G84}
R.~Glowinski, \emph{Numerical methods for nonlinear variational problems},
  Springer-Verlag, 1984.

\bibitem{HRT96}
J.~G. Heywood, R.~Rannacher, and R.~Turek, \emph{Artificial boundaries and flux
  and pressure conditions for the incompressible {N}avier-{S}tokes equations},
  Internat.~J.~Numer.~Methods Fluids \textbf{22} (1996), 325--352.

\bibitem{Kas11_slip}
T.~Kashiwabara, \emph{On a finite element approximation of the {S}tokes
  equations under boundary conditions of friction type. {P}art {I}: slip
  boundary problem}, submitted (2011), 1--30.

\bibitem{Kas11_leak}
\bysame, \emph{On a finite element approximation of the {S}tokes equations
  under boundary conditions of friction type. {P}art {II}: leak boundary
  problem}, submitted (2011), 1--27.

\bibitem{KFS98}
H.~Kawarada, H.~Fujita, and H.~Suito, \emph{Wave motion breaking upon the
  shore}, GAKUTO Internat. Ser. Math. Sci. Appl. \textbf{11} (1998), 145--159.

\bibitem{KO88}
N.~Kikuchi and J.~T. Oden, \emph{Contact problems in elasticity}, SIAM,
  Philadelphia, 1988.

\bibitem{Kol00}
D.~S. Konovalova, \emph{Subdifferential boundary value problems for
  {N}avier-{S}tokes evolution equations}, Differ. Equ. \textbf{36} (2000),
  878--885.

\bibitem{Lad69}
O.~A. Ladyzhenskaya, \emph{The mathematical theory of viscous incompressible
  flow}, Gordon and Breach, 1969.

\bibitem{R05}
C.~{Le Roux}, \emph{Steady {S}tokes flows with threshold slip boundary
  conditions}, Math.~Models Methods Appl.~Sci. \textbf{15} (2005), 1141--1168.

\bibitem{RT07}
C.~{Le Roux} and A.~Tani, \emph{Steady solutions of the {N}avier-{S}tokes
  equations with threshold slip boundary conditions}, Math.~Meth.~Appl.~Sci.
  \textbf{30} (2007), 595--624.

\bibitem{LiAn11_2}
Y.~Li and R.~An, \emph{Semi-discrete stabilized finite element methods for
  {N}avier-{S}tokes equations with nonlinear slip boundary conditions based on
  regularization procedure}, Numer.~Math. \textbf{117} (2011), 1--36.

\bibitem{LiAn11}
\bysame, \emph{Two-level pressure projection finite element methods for
  {N}avier-{S}tokes equations with nonlinear slip boundary conditions},
  Appl.~Numer.~Math. \textbf{61} (2011), 285--297.

\bibitem{LiLi08}
Y.~Li and K.~Li, \emph{Penalty finite element method for {S}tokes problem with
  nonlinear slip boundary conditions}, Appl.~Math.~Comput. \textbf{204} (2008),
  216--226.

\bibitem{LiLi10_1}
\bysame, \emph{Locally stablized finite element method for {S}tokes problem
  with nonlinear slip boundary conditions}, J.~Comput.~Math. \textbf{28}
  (2010), 826--836.

\bibitem{LiLi10_2}
\bysame, \emph{Pressure projection stabilized finite element method for
  {N}avier-{S}tokes equations with nonlinear slip boundary conditions},
  Computing \textbf{87} (2010), 113--133.

\bibitem{Mar07}
S.~Maru\v{s}i\'c, \emph{On the {N}avier-{S}tokes system with pressure boundary
  condition}, Ann. Univ. Ferrara Sez. VII Sci. Mat. \textbf{53} (2007),
  319--331.

\bibitem{S04}
N.~Saito, \emph{On the {S}tokes equation with the leak or slip boundary
  conditions of friction type: regularity of solutions}, Publ. Res. Inst. Math.
  Sci. \textbf{40} (2004), 345--383.

\bibitem{SK04}
H.~Suito and H.~Kawarada, \emph{Numerical simulation of spilled oil by
  fictitious domain method}, Japan J. Indust. Appl. Math. \textbf{21} (2004),
  219--236.

\bibitem{Tem77}
R.~Temam, \emph{Navier-{S}tokes equations}, North-Holland, 1977.

\end{thebibliography}
\providecommand{\bysame}{\leavevmode\hbox to3em{\hrulefill}\thinspace}
\providecommand{\MR}{\relax\ifhmode\unskip\space\fi MR }
\providecommand{\MRhref}[2]{%
  \href{http://www.ams.org/mathscinet-getitem?mr=#1}{#2}
}
\providecommand{\href}[2]{#2}

\end{document}